\newcommand{\Z}{\mathbb Z}
\newcommand{\N}{\mathbb N}
\newcommand{\C}{\mathbb C}
\newcommand{\R}{\mathbb R}
\newcommand{\hp}{\mathbb H}
\renewcommand{\Re}{\mathrm{Re}}
\renewcommand{\Im}{\mathrm Im}
\newcommand{\set}[1]{\left\{#1\right\}}
\newcommand{\eps}{\varepsilon}
\newcommand\BB{{\cal B}}
\newcommand\FF{{\cal F}}
\newcommand\HH{{\cal H}}
\newcommand\JJ{{\cal J}}
\newcommand\KK{{\cal K}}
\newcommand\LL{{\cal L}}
\newcommand\OO{{\cal O}}
\newcommand\PP{{\cal P}}
\newcommand\QQ{{\cal Q}}
\newcommand\RR{{\cal R}}
\newcommand\ZZ{{\cal Z}}
\newtheorem {theorem}{Theorem}[section]
\newtheorem {lemma}[theorem]{Lemma}
\newtheorem {proposition}[theorem]{Proposition}
\newtheorem {corollary}[theorem]{Corollary}
\theoremstyle{remark}
\newtheorem {remark}[theorem]{Remark}
\newtheorem {remarks}[theorem]{Remarks}
\theoremstyle{definition}
\newtheorem{definition}[theorem]{Definition}
\numberwithin{equation}{section}
\begin{document}

\title{A thermodynamic approach to two-variable Ruelle and Selberg zeta functions via the Farey map}

\author{Claudio Bonanno
\thanks{(corresponding author) Dipartimento di Matematica Applicata,
Universit\`a di Pisa, via F. Buonarroti 1/c, I-56127 Pisa, Italy,
email: $<$bonanno@mail.dm.unipi.it$>$} \and Stefano Isola
\thanks{Dipartimento di Matematica e Informatica, Universit\`a
di Camerino, via Madonna delle Carceri, I-62032 Camerino, Italy.
e-mail: $<$stefano.isola@unicam.it$>$}}
\date{}
\maketitle

\begin{abstract}
In this paper we consider the transfer operator approach to the Ruelle and Selberg zeta functions associated to continued fractions transformations and the geodesic flow on the full modular surface. We extend the results by Ruelle and Mayer to two-variable zeta functions, $\zeta(q,z)$ and $Z(q,z)$. The $q$ variable plays the role of the inverse temperature and the introduction of the ``geometric variable'' $z$ is essential in the tentative to provide a general approach, based on the Farey map, to the correspondence between the analytic properties of the zeta functions themselves, the spectral properties of a class of generalised transfer operators and the theory of a generalisation of the three-term functional equations studied by Lewis and Zagier. The first step in this direction is a detailed study of the spectral properties of a family of signed transfer operators $\PP^{\pm}_{q}$ associated to the Farey map.
\end{abstract}

\noindent{\textbf{Keywords:} transfer operators; Farey map; Gauss map; Selberg zeta function; Ruelle zeta function}

\section{Introduction}

The transfer operator approach to the Selberg zeta function for the full modular group $PSL(2,\Z)$ introduced by Mayer in \cite{Ma3} led to new interesting interactions between number theory and the thermodynamic approach to dynamical systems. The correspondence between the zeroes of the Selberg zeta function and the eigenfunctions of the transfer operators for the Gauss map has been  subsequently studied in particular in connection with the theory of period functions for cusp and non-cusp forms on $PSL(2,\Z)$ in \cite{CM1,CM2,LeZa}. In this paper we extend the approach in \cite {Ma3} to signed transfer operators $\PP^{\pm}_{q}$ for the Farey map, which is connected to the Gauss map by an induction procedure. This approach clarifies some aspects of the results in \cite{Ma3}, and we are naturally led to the definitions of two-variable Ruelle and Selberg zeta functions for which extensions of Mayer results hold.

We now discuss in details the content of the paper. The first issue is the study of the signed transfer operators $\PP^{\pm}_{q}$ for the Farey map defined in (\ref{farey}). The spectral properties of transfer operators for uniformly expanding maps of an interval are now well understood and depend crucially on the Banach space considered \cite{Bal}. For sufficiently regular functions it turns out that the transfer operator is quasi-compact, hence the spectrum is made of isolated eigenvalues with finite multiplicity and the essential part, a disk of radius strictly smaller than the spectral radius. However the essential spectral radius depends on the expanding constant $\rho$ of the map and on the degree of regularity of the functions. In particular as $\rho \to 1$ from above, the essential spectral radius converges to the spectral radius. The Farey map $F$ is a prototype of a smooth intermittent map on the unit interval $[0,1]$, being expanding everywhere except at the origin, a neutral fixed point. Hence for the Farey map $\rho =1$, and classical approaches to the spectral properties of its transfer operator fail. As a matter of fact, using {\sl ad hoc} techniques, the spectrum of the Farey transfer operator when acting on suitable spaces of holomorphic functions has been shown to have an absolutely continuous component given by the unit interval (see \cite{Rugh,Is, BGI}). 

Here we consider the family  of signed generalised transfer operators $\PP_{q}^{\pm}$ associated to $F$ and defined in (\ref{transfer-farey}) and provide a detailed study of their spectral properties on a space of holomorphic functions on an open domain containing $(0,1)$. In Section \ref{inizio}, we prove our first main result which is a complete characterisation of the eigenfunctions of $\PP_{q}^{\pm}$ with eigenvalues not embedded in the continuous spectrum (Theorem \ref{eigenfunc-acca}). This is obtained in terms of an integral transform defined on weighted spaces $L^{p}((0,+\infty),m_{q}(t))$ with $dm_{q}(t) = t^{2\Re(q)-1} e^{-t}$. A similar approach has been used in \cite{Is,Pre, GI, BGI}. In \cite{BGI} the Hilbert space $L^{2}((0,+\infty),m_{q}(t))$ has been studied in some detail and the resulting issues are used in this paper. In particular, a simple argument (Proposition \ref{proprieta-farey}) shows that eigenfunctions of $\PP_{q}^{\pm}$ satisfy the three-term functional equation (\ref{lewis}), which for $\lambda=1$ is but the Lewis functional equation studied in \cite{LeZa}, where a class of solutions of this equation is proved to be in one-to-one correspondence with the Maass cuspidal and non-cuspidal forms on the full modular group $PSL(2,\Z)$. Some of our results, in particular Corollary \ref{autofunz}, are then generalisations of results in \cite{LeZa} to eigenvalues $\lambda \not= 1$.
  
In the second part, we use an inducing procedure for $F$ which was first introduced in \cite{PS} for a general class of intermittent interval maps. The idea is to consider an induced map $G$ on the interval with respect to the first passage time at a subset of $[0,1]$ away from the neutral fixed point, and to study the spectral properties of the transfer operators $\QQ$ of $G$. Then functional relations between $\QQ$ and $\PP$ allow to translate the spectral properties of $\QQ$ into those of $\PP$ and viceversa. For applications of this method see also \cite{Rugh,Is,Pre}.
It is well known that the Farey map is related to number theory, and in particular to the continued fractions expansion of $x\in [0,1]$. Moreover, if we define $G$ to be the induced map with respect to the first passage time at the interval $\left( \frac 12, 1 \right]$, it turns out that $G$ is the Gauss map on the unit interval, see (\ref{indg}). In Section \ref{induced-op}, following this procedure, we define in (\ref{q}) a two-parameter family of transfer operators $\QQ_{q,z}$ associated to $G$ and look for functional relations between $\QQ_{q,z}$ and $\PP_{q}^{\pm}$. This is obtained in Theorem \ref{q-vs-p} and allows us to obtain a one-to-one correspondence between all eigenfunctions to the eigenvalues $\pm 1$ of $\QQ_{q,z}$ and some eigenfunctions with eigenvalue $\frac 1 z$ of $\PP_{q}^{\pm}$ (Corollary \ref{relaz-autof}). 

Let us recall that the properties of the transfer operators $\QQ_{q,1}$ have been already studied in \cite{Ma1,Ma2,Ma3,CM2}, and also discussed in \cite{LeZa}. In particular, in \cite{Ma3} it is proved that the eigenfunctions with eigenvalues $\pm 1$ of $\QQ_{q,1}$ are in one-to-one correspondence with the zeroes of the Selberg zeta function for the full modular group $PSL(2,\Z)$. In turn, it is the content of Selberg trace formula that the zeroes of the Selberg zeta function are in one-to-one correspondence with the Maass cusp forms and the non-trivial zeroes of the Riemann zeta function, which are related to the Maass non-cuspidal forms. One then obtains a relation between eigenfunctions to the eigenvalues $\pm 1$ of $\QQ_{q,1}$ and a class of solutions of the Lewis three-term functional equation. This is discussed in \cite{LeZa} and proved without appealing to the Selberg zeta function. 

It is one of the aims of this paper to obtain the latter relation from the eigenfunctions of the transfer operators $\PP_{q}^{\pm}$, thus giving, in the spirit of \cite{Ma3}, a thermodynamic formalism approach to the period functions studied in \cite{LeZa}. The first step is Theorem \ref{q-vs-p} mentioned above. Notice in particular the term $\pm c\mu ^{x}$ in (\ref{ug-funz}). This term is indeed responsible for the restriction to a class of eigenfunctions of $\PP_{q}^{\pm}$ in Corollary \ref{relaz-autof}, i.e. to a class of solutions of theLewis functional equation. The second step is the main result of Section \ref{sect-zeta}, Theorem \ref{main-zeta}. We first show that the transfer operators $\QQ_{q,z}$ are of trace class on a suitable Banach space, hence the Fredholm determinants $\det(1\mp \QQ_{q,z})$ are well defined, and the traces can be computed as in \cite{Ma1} and \cite{Is}. Hence we obtain (\ref{due-zeta}), which is contained in \cite{Ma3} for $z=1$, and leads to a definition of a two-variable Selberg zeta function $Z(q,z)$. Altogether these steps show that the zeroes of the function $Z(q,z)$ are in one-to-one correspondence with the eigenfunctions of $\PP_{q}^{\pm}$ with eigenvalue $\frac 1z$ when the constant $c$ which appears in (\ref{ug-funz}) (see also Corollary \ref{autofunz}) vanishes. In the case $z=1$, as a corollary of our results we also obtain a new proof for the characterisation of the nontrivial zeroes of the function $Z(q,1)$ in terms of even and odd cusp and non-cusp forms given in \cite{efrat}. This is the content of Theorem \ref{efrat} in Section \ref{z=1}.

In the same Section we also discuss a two-variable Ruelle zeta function $\zeta(q,z)$ for the Farey map, defined in (\ref{zeta}). Our main result is equation (\ref{R-zeta}) in Theorem \ref{main-zeta}, which gives an expression of $\zeta(q,z)$ in terms of the Fredholm determinants $\det(1\mp \QQ_{q,z})$, extending a previous result in \cite{Is}. For a discussion of multi-variable zeta functions in dynamical systems and number theory see \cite{Lag}.

Finally in Section \ref{sect-ff} we come back to the Farey map and its relations to number theory. Using a formal manipulation of (\ref{due-zeta}) and (\ref{R-zeta}), which is similar to the approach in \cite{Rugh}, we obtain an expression for the zeta functions $Z(q,z)$ and $\zeta(q,z)$ as exponentials of power series whose coefficients are obtained as sums along branches of the Farey tree (Theorem \ref{formula-selberg}).

\section{Transfer operators for the Farey map} \label{inizio}
\noindent
Let $F:[0,1]\to [0,1]$ be  the {\sl Farey map} defined by
\begin{equation} \label{farey}
F(x)=\left\{
\begin{array}{ll}
\frac{x}{1-x} & \mbox{if }\ 0\le x\le \frac{1}{2}\\[0.3cm]
\frac{1-x}{x} & \mbox{if }\ \frac{1}{2} \le x \le 1
\end{array} \right.
\end{equation}

\noindent To this map we can associate a family of \emph{signed
generalized transfer operators} $\PP_q^{\pm}$, with $q\in C$, $\Re(q)>0$, whose action on a function $f(x):[0,1]\to \C$ is given by a weighted sum over the values of $f$ on
the set $F^{-1}(x)$, namely if
\begin{equation} \label{p0}
(\PP_{0,q} f)(x) := \left(\frac{1}{x+1}\right)^{2q} \ f\left(
\frac{x}{x+1}\right)
\end{equation}
\begin{equation} \label{p1}
(\PP_{1,q} f)(x) := \left(\frac{1}{x+1}\right)^{2q} \ f\left(
\frac{1}{x+1}\right)
\end{equation}
\noindent
we define
\begin{equation} \label{transfer-farey}
(\PP_q^{\pm} f)(x) := (\PP_{0,q} f)(x) \pm (\PP_{1,q} f)(x)
\end{equation}

\noindent Since the operators $\PP_{q}^{\pm}$ are defined by multiplication and composition with real analytic maps which extend to holomorphic maps on a neighbourhood of the interval $[0,1]$, it is natural to consider the action of $\PP_{q}^{\pm}$ on the space $\HH(B)$ of holomorphic functions on the open domain 
$$
B:= \set{x\in \C\ :\ \left| x-\frac 1 2 \right| < \frac 1 2}.
$$ 
We point out that we still denote by $x$ the
complex variable. We are interested in the spectral properties of
these transfer operators, hence first of all we give a
characterisation of their eigenfunctions. To this aim we give some
properties of the action of $\PP_{q}^{\pm}$ on $\HH(B)$. It is
useful to consider also the involution $\JJ_q$ with
\begin{equation} \label{involution}
(\JJ_q f)(x) := \frac{1}{x^{2q}}\ f\left(\frac{1}{x}\right)
\end{equation}
By definition of involution, any function $f\in
\HH(\set{\Re(x)>0})$ can be written as a sum of eigenfunctions of
$\JJ_{q}$, that is $f = \varphi_{+} + \varphi_{-}$ with $\JJ_{q}
\varphi_{+} = \varphi_{+}$ and $\JJ_{q} \varphi_{-} = -
\varphi_{-}$.

\begin{proposition} \label{proprieta-farey}
(i) If $f \in \HH(B)$ then $\PP_{q}^{\pm} f \in \HH(\set{\Re(x)>0})$;

\noindent (ii) if $f \in \HH(B)$ is an eigenfunction of
$\PP_{q}^{\pm}$ with eigenvalue $\lambda \in \C \setminus
\set{0}$, then $f \in \HH(\set{\Re(x)>0})$ and $\JJ_q f = \pm f$. In particular if $\JJ_{q}f = -f$ then $f(1)=0$. Moreover
\begin{equation} \label{lewis}
\lambda f(x) - f(x+1) = \left( \frac{1}{x+1} \right)^{2q} \
f\left( \frac{x}{x+1} \right) \qquad \Re(x)>0\, ;
\end{equation}

\noindent (iii) if $f\in \HH(\set{\Re(x)>0})$ satisfies
(\ref{lewis}) with $\lambda \in \C \setminus \set{0}$, then
$\varphi_{\pm} := \frac 1 2 (f \pm \JJ_{q}f)$ satisfies $\PP_{q}^{\pm}
\varphi_{\pm} = \lambda \varphi_{\pm}$.
\end{proposition}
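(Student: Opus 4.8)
\medskip\noindent
The plan is to reduce the whole statement to one geometric fact about the inverse branches of $F$ together with one algebraic identity saying that the involution $\JJ_q$ interchanges the two branch operators. Write $(T_q f)(x):=f(x+1)$ for the translation operator and $s(x):=1/x$, $t(x):=x+1$, so that the inverse branches of $F$ are $A_0(x)=\tfrac{x}{x+1}=(s\circ t\circ s)(x)$ and $A_1(x)=\tfrac1{x+1}=(s\circ t)(x)$, and $\JJ_q$ is the operator attached to $s$. For (i) the key remark is that $B=\set{x\in\C:\Re(1/x)>1}$, since $|x-\tfrac12|<\tfrac12$ is equivalent to $(\Re x)^2-\Re x+(\Im x)^2<0$; consequently a Möbius map $g$ carries $\set{\Re x>0}$ into $B$ as soon as $\Re(1/g(x))>1$ throughout the right half-plane, and both branches pass this test: $\Re(1/A_1(x))=\Re(x+1)>1\iff\Re x>0$ and $\Re(1/A_0(x))=\Re(1+1/x)>1\iff\Re(1/x)>0\iff\Re x>0$. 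Since $B\subset\set{\Re x>0}$ and $(x+1)^{-2q}$ is holomorphic on $\set{\Re x>0}$ (there $x+1$ avoids $(-\infty,0]$), this proves (i); the same elementary remarks — that $x$, $x+1$, $1/x$ and $(x+1)/x$ all remain in the right half-plane when $\Re x>0$ — show that $\PP_{0,q}$, $\PP_{1,q}$, $\JJ_q$ and $T_q$ all preserve $\HH(\set{\Re x>0})$, so all subsequent computations can be carried out there.

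Next I would record, reading the composition of branch maps off their factorisations through $s$ and $t$, the operator identities
\[
\PP_{1,q}=T_q\JJ_q,\qquad \PP_{0,q}=\JJ_q\,T_q\,\JJ_q=\JJ_q\PP_{1,q},\qquad \JJ_q^{2}=\mathrm{id}.
\]
Each is a one-line check; the only delicate point is that the branches of the $2q$-powers combine correctly, which is harmless on $\set{\Re x>0}$ because every argument occurring there has modulus of argument $<\pi/2$ (so, e.g., $x^{-2q}((x+1)/x)^{-2q}=(x+1)^{-2q}$). From these identities $\JJ_q$ swaps the branches, $\JJ_q\PP_{0,q}=\PP_{1,q}$ and $\JJ_q\PP_{1,q}=\PP_{0,q}$, and hence
\[
\PP_q^{\pm}=(\JJ_q\pm 1)\,T_q\,\JJ_q,\qquad \JJ_q\,\PP_q^{\pm}=\pm\,\PP_q^{\pm}.
\]

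Parts (ii) and (iii) then follow quickly. For (ii): if $\PP_q^{\pm}f=\lambda f$ with $\lambda\neq0$, then $f=\lambda^{-1}\PP_q^{\pm}f$ lies in $\HH(\set{\Re x>0})$ by (i), so $f$ extends there; applying $\JJ_q$ and using $\JJ_q\PP_q^{\pm}=\pm\PP_q^{\pm}$ gives $\JJ_q f=\pm f$, and evaluating $\JJ_q f=-f$ at $x=1$, where $(\JJ_q f)(1)=f(1)$, forces $f(1)=0$; substituting $\pm\PP_{1,q}f=\pm T_q\JJ_q f=\pm T_q(\pm f)=T_q f$ into $\lambda f=\PP_{0,q}f\pm\PP_{1,q}f$ yields $\lambda f-T_q f=\PP_{0,q}f$, which written out is exactly (\ref{lewis}). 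For (iii): read (\ref{lewis}) as the operator equation $\lambda f-T_q f=\JJ_q T_q\JJ_q f$, apply $\JJ_q$ and $\JJ_q^2=\mathrm{id}$ to get $\lambda\JJ_q f-\JJ_q T_q f=T_q\JJ_q f$, then add and subtract the two equations and regroup the $f$ and $\JJ_q f$ terms; this gives $\lambda\varphi_+=(1+\JJ_q)T_q\varphi_+$ and $\lambda\varphi_-=(1-\JJ_q)T_q\varphi_-$ for $\varphi_\pm=\tfrac12(f\pm\JJ_q f)$, which satisfy $\JJ_q\varphi_\pm=\pm\varphi_\pm$. Then $\PP_q^{+}\varphi_+=(\JJ_q+1)T_q\JJ_q\varphi_+=(1+\JJ_q)T_q\varphi_+=\lambda\varphi_+$ and $\PP_q^{-}\varphi_-=(\JJ_q-1)T_q\JJ_q\varphi_-=(1-\JJ_q)T_q\varphi_-=\lambda\varphi_-$, as required.

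The only step demanding genuine care is the branch bookkeeping in $\PP_{0,q}=\JJ_q T_q\JJ_q$ and $\JJ_q^2=\mathrm{id}$, i.e.\ checking that the chosen branches of the various $2q$-powers combine consistently; this is mild precisely because every base point that occurs stays in the open right half-plane, so the principal branch can be used throughout with no sign ambiguity. The substantive content — and the real reason for introducing the domain $B$ — is the identification $B=\set{\Re(1/x)>1}$ together with the fact that both inverse branches of $F$ map $\set{\Re x>0}$ into $B$, which is what makes an eigenfunction in $\HH(B)$ extend automatically to the larger half-plane.
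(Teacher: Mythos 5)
Your proof is correct and follows essentially the same route as the paper's: part (i) rests on the same observation that both inverse branches send $\set{\Re(x)>0}$ into $B$, part (ii) on the identity $\JJ_q\PP_q^{\pm}=\pm\PP_q^{\pm}$ together with $\pm\PP_{1,q}f = T_q f$ for eigenfunctions, and part (iii) on the fact that applying $\JJ_q$ to (\ref{lewis}) shows $\JJ_q f$ satisfies the same equation, whence $\varphi_{\pm}$ do too. Your packaging via the factorisations $\PP_{1,q}=T_q\JJ_q$, $\PP_{0,q}=\JJ_q T_q\JJ_q$ and the identification $B=\set{\Re(1/x)>1}$ merely makes explicit the branch bookkeeping the paper leaves implicit.
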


\noindent
\begin{proof}
(i) follows simply by the fact that if $\Re(x) >0$ then both
$\frac{x}{x+1}$ and $\frac{1}{x+1}$ are in $B$.

\noindent (ii) The first assertion follows by (i). For the second,
by the definition of $\JJ_{q}$ given in (\ref{involution}) one
easily checks that
$$\JJ_q  \PP_q^\pm f =\pm  \, \PP_q^\pm f.$$
Hence if $f$ is an eigenfunction we can rewrite the previous
expression substituting $\PP_q^\pm f$ with $\lambda f$ and the
second assertion follows. Finally (\ref{lewis}) is obtained by
using the fact that for eigenfunctions it holds
$$\pm (\PP_{1,q} f)(x) = \pm\, (\JJ_{q} f)(x+1) = f(x+1).$$

\noindent (iii) Let $\varphi_{+}$ and $\varphi_{-}$ be defined as
above. They satisfy $f=\varphi_{+} + \varphi_{-}$ and $\JJ_{q}
\varphi_{\pm} = \pm \varphi_{\pm}$. Moreover, one can easily
checks that if $f$ satisfies (\ref{lewis}) with $\lambda \in \C
\setminus \set{0}$ than the same holds true for $\JJ_q f$. Hence
also $\varphi_{\pm}$ satisfy (\ref{lewis}) with the same
$\lambda$, and their invariance properties under $\JJ_{q}$ imply
the assertion.
\end{proof}

\begin{remark}
Eq. (\ref{lewis}) has been studied in \cite{Le,LeZa} for $\lambda=1$ in connection with Maass forms on the full modular group $PSL(2,\Z)$.
Moreover part (iii) of the proposition implies that eq.
(\ref{lewis}) has no solutions for $|\lambda|$ bigger than both
the spectral radii of $\PP_{q}^{\pm}$.
\end{remark}
\noindent
We now introduce the functional analytic settings. In this section we are interested in defining a family of spaces $\HH_{q,\mu}$ to which the
eigenfunctions of $\PP_{q}^{\pm}$ belong. 

Throughout this paper we use the notation $\xi:=\Re(q)$ and are assuming $\xi >0$. Let $\LL$ and
$\LL^{-1}$ denote the Laplace and inverse Laplace transform,
respectively. We recall that
\begin{equation} \label{power-trans}
\LL[t_+^{\nu-1}](x) = \Gamma(\nu)\ x^{-\nu}  \qquad
\LL^{-1}[x^{-\nu}](t) = \frac{t_+^{\nu-1}}{\Gamma(\nu)} \qquad t
\in \R, \ \nu \in \C \setminus \Z^-
\end{equation}
where $\Gamma(\nu)$ is the usual Gamma function, $t_+^\alpha:=
t^\alpha H(t)$, with $H(t)$ the Heaviside function, and for
$\Re(\nu)<0$ we are working with generalised functions. In the following when it causes no confusion we drop for brevity the independent variable in the Laplace and inverse Laplace transforms. Hence we use the notation
\begin{equation} \label{notation}
\LL[\phi(t)] := \LL[\phi(t)](x)\qquad \LL^{-1}[f(x)]:= \LL^{-1}[f(x)](t)
\end{equation}
We now consider the integral transform $\BB_{q}$ introduced in
\cite{Is} and defined as
\begin{equation} \label{borel-def}
\phi(t) \mapsto {\BB}_q [\phi](x):= \frac{1}{x^{2q}}\ \int_0^\infty
e^{-\frac{t}{x}}\, e^t\, \phi (t)\, m_q(dt)
\end{equation}
where $m_{q}$ is the absolutely continuous measure on $\R^{+}$
defined as $m_q(dt)= t^{2q-1}\, e^{-t}\, dt$. As for the Laplace transform in the notation we drop the dependence on the variable.

Finally for each $q$ with $\xi:= \Re(q)>0$ we introduce the notation
\begin{equation}\label{spazi-lpq}
L^{p}(m_{q}) := \set{\phi :\R^{+}\to \C\ :\ \int_{0}^{\infty}\, |\phi(t)|^{p}\, t^{2\xi -1} e^{-t}\, dt <\infty}
\end{equation}
with the norm
$$
\| \phi \|_{p} := \left( \int_{0}^{\infty}\, |\phi(t)|^{p}\, t^{2\xi -1} e^{-t}\, dt \right)^{\frac 1p}
$$
It is immediate to check that
$$
L^{1}(m_{q}) \ni \phi \mapsto \BB_{q} [\phi] \in \HH(B)
$$
and that $\BB_{q}$ is continuous on $L^{1}(m_{q})$ with values on $\HH(B)$ with the standard topology induced by the family of supremum norms on compact subsets of $B$. Indeed
$$
| \BB_{q} [\phi] (x) | \le |x|^{-2\xi} \, \| \phi\|_{1}\qquad \forall\, x\in B
$$
hence $\BB_{q}$ sends bounded sets of $L^{1}(m_{q})$ into bounded sets of $\HH(B)$.
Moreover, since $m_{\xi}(0,\infty) = \Gamma(2\xi)$, one has $L^{p}(m_{q}) \subset L^{1}(m_{q})$ for all $p\in [1,\infty]$. We now study the spaces which contain the eigenfunctions of $\PP_{q}^{\pm}$ in terms of the $\BB_{q}$ transform.

\begin{definition} \label{spazi-acca}
For $\Re(q)>0, \, q\not= \frac 1 2$, $\mu\in \C$ and $p \in [1,+\infty]$, let $\HH_{q,\mu}^{p}$ be the space of functions $f(x)$ written as
$$
f(x) = \frac{c\, \mu^{\frac 1 x}}{x^{2q}} + \BB_{q} \left[ \frac b
t + \phi(t) \right](x) \qquad x\in B
$$
for $c, b \in \C$ and $\phi \in L^{p}(m_{q})$ as defined in \eqref{spazi-lpq}.
\end{definition}

\begin{remarks} \label{osservazioni-su-hq}
The definition of the spaces
$\HH_{q,\mu}^{p}$ needs some comments. First of all to prove that Definition
\ref{spazi-acca} is well posed, we extend the definition of the
$\BB_q$ transform by means of (\ref{power-trans}) to the function $\phi(t) = \frac 1t$ as
\begin{equation} \label{bq-su-t}
\BB_q \left[ \frac{1}{t} \right] = \frac{1}{x^{2q}}\
\int_{0}^{\infty} \ e^{- \frac t x} \frac{1}{t}\, t^{2q-1}  \, dt
= \frac{1}{x^{2q}}\ \LL[t^{2q-2}]\left(\frac 1 x\right) = \Gamma(2q-1)\,
x^{-1}
\end{equation}
which is well defined for $\xi=\Re(q)>0$ and $q\not= \frac 1 2$, whereas the first integral is defined only for $\xi> \frac 1 2$. Hence, the condition $q\not= \frac 12$ comes from the term $\frac 1 t$.
\noindent
A typical example of functions to which we will apply the $\BB_{q}$ transform is
\begin{equation} \label{typical}
\frac b t + \phi (t) = \frac{e^{-t}}{1-e^{-t}}\
\frac{a_0}{\Gamma(2q)} + \frac{e^{-t}}{1-e^{-t}}\ \sum_{n\ge 1}\
\frac{a_{n}}{\Gamma(n+2q)} \ t^{n} \qquad t\in \R^{+}
\end{equation}
with $\limsup_{n} (a_{n})^{\frac 1 n} \le 1$, for which
$b=\frac{a_0}{\Gamma(2q)}$ and $\phi$ is in $L^2(m_q)$ with
$\phi(0) = \frac{a_1}{\Gamma(2q+1)} - \frac{a_0}{2\, \Gamma(2q)}$.
\noindent
A further consideration is about the function to which we apply
$\BB_{q}$. This function is written as $\left( \frac bt + \phi(t) \right)$ above, but could be written in many different ways, and actually some of them will be used below. However the main feature of this term that we want to stress is that it can be divided into
two parts with respect to the behaviour at the origin: the first
part has a singularity at $t=0^{+}$ of order $t^{-1}$, hence in particular does not belong to $L^{p}(m_{q})$ for $\xi\le \frac p2$; the second part is instead in $L^{p}(m_{q})$.
\end{remarks}

\begin{proposition} \label{azione-su-spazi}
The transfer operators $\PP_q^\pm$ leave invariant the spaces
$\HH_{q,\mu}^{2}$ for any $\mu \in \C$. In particular
\begin{equation} \label{p-s0-hq}
\begin{array}{c}
\PP_{0,q} \left( \frac{c\, \mu^{\frac 1 x}}{x^{2q}} + \BB_{q}
\left[  \psi \right] \right) = \frac{c\mu \, \mu^{\frac 1
x}}{x^{2q}} + \BB_{q} \left[ M(\psi) \right] \\[0.3cm]
\PP_{1,q} \left( \frac{c\, \mu^{\frac 1 x}}{x^{2q}} + \BB_{q}
\left[  \psi \right] \right) = \BB_q \left[ c\, \bar \phi_{q,\mu}
+ N_q(\psi) \right]
\end{array}
\end{equation}
where $M$ and $N_q$ are linear operators defined by
\begin{equation} \label{emme}
M (\psi) (t) = e^{-t}\, \psi(t)
\end{equation}
\begin{equation} \label{enne}
N_q (\psi) (t) = \int_0^\infty\
\frac{J_{2q-1}(2\sqrt{st})}{(st)^{q-\frac 1 2}}\, \psi(s)\,
m_q(ds)
\end{equation}
with $J_p$ the Bessel function of order $p$, and the function
$\bar \phi_{q,\mu}(t)$ defined by
\begin{equation} \label{funzione-bar}
\bar \phi_{q,\mu} (t) := \mu \ \sum_{k=0}^\infty\ \frac{(\log
\mu)^k}{\Gamma(k+1)\, \Gamma(k+2q)}\, t^k
\end{equation}
letting $\bar \phi_{q,0}(t) \equiv 0$.
\end{proposition}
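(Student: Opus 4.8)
The plan is to derive, by direct computation, the four identities collected in (\ref{p-s0-hq}), and then to read off the invariance of $\HH_{q,\mu}^{2}$ by following where each summand is sent. The only tools needed are the unfolded form of (\ref{borel-def}), namely $\BB_{q}[\psi](x)=x^{-2q}\int_{0}^{\infty}e^{-t/x}\psi(t)\,t^{2q-1}\,dt$ (so that $\BB_{q}$ is essentially a Laplace transform in the variable $1/x$), the elementary identity $\int_{0}^{\infty}e^{-t/x}t^{\nu-1}\,dt=\Gamma(\nu)\,x^{\nu}$ for $\Re(\nu)>0$ and $\Re(x)>0$, and the classical Laplace transform $\int_{0}^{\infty}e^{-pt}t^{\nu/2}J_{\nu}(2\sqrt{at})\,dt=a^{\nu/2}p^{-\nu-1}e^{-a/p}$, valid for $\Re(\nu)>-1$, $\Re(p)>0$.

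For $\PP_{0,q}$: inserting $c\,\mu^{1/x}x^{-2q}$ into (\ref{p0}), the factor $(x+1)^{-2q}$ cancels $(x/(x+1))^{-2q}$ and the exponent of $\mu$ becomes $1+1/x$, which gives $c\mu\,\mu^{1/x}x^{-2q}$; inserting $\BB_{q}[\psi]$ and cancelling the $(x+1)$--powers leaves $x^{-2q}\int_{0}^{\infty}e^{-t(x+1)/x}\psi(t)t^{2q-1}\,dt=\BB_{q}[e^{-t}\psi]=\BB_{q}[M(\psi)]$. For $\PP_{1,q}$: inserting $c\,\mu^{1/x}x^{-2q}$ into (\ref{p1}) and cancelling powers yields $c\mu^{x+1}=c\mu\,\mu^{x}$; expanding $\bar\phi_{q,\mu}$ as in (\ref{funzione-bar}) and integrating term by term — legitimate by absolute convergence, since $\Gamma(k+2\Re(q))/|\Gamma(k+2q)|$ grows at most polynomially in $k$ — gives $\BB_{q}[\bar\phi_{q,\mu}](x)=\mu\sum_{k\ge0}(\log\mu)^{k}x^{k}/k!=\mu\,\mu^{x}$, hence $\PP_{1,q}(c\,\mu^{1/x}x^{-2q})=\BB_{q}[c\,\bar\phi_{q,\mu}]$ (the case $\mu=0$ being trivial). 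Finally, inserting $\BB_{q}[\psi]$ into (\ref{p1}) and cancelling powers gives $\int_{0}^{\infty}e^{-t(x+1)}\psi(t)t^{2q-1}\,dt=\int_{0}^{\infty}e^{-tx}e^{-t}\psi(t)t^{2q-1}\,dt$; to identify this with $\BB_{q}[N_{q}(\psi)]$ I would substitute (\ref{enne}) into $\BB_{q}[N_{q}\psi](x)=x^{-2q}\int_{0}^{\infty}e^{-t/x}(N_{q}\psi)(t)t^{2q-1}\,dt$, interchange the two integrations by Fubini, and evaluate the inner integral $\int_{0}^{\infty}e^{-t/x}t^{q-1/2}J_{2q-1}(2\sqrt{st})\,dt=s^{q-1/2}x^{2q}e^{-sx}$ by the Bessel--Laplace formula with $\nu=2q-1$, $a=s$, $p=1/x$ (first for $\Re(q)>0$ and $x\in B$, then for complex $q$ by analytic continuation); the surviving factor is precisely $\int_{0}^{\infty}e^{-sx}e^{-s}\psi(s)s^{2q-1}\,ds$.

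For the invariance, given $f=c\,\mu^{1/x}x^{-2q}+\BB_{q}[b/t+\phi]$ with $\phi\in L^{2}(m_{q})$, the two lines of (\ref{p-s0-hq}) yield $\PP_{q}^{\pm}f=c\mu\,\mu^{1/x}x^{-2q}+\BB_{q}[M(b/t+\phi)\pm c\,\bar\phi_{q,\mu}\pm N_{q}(b/t+\phi)]$, so it remains to check that the bracketed density is again of the form $b'/t+\phi'$ with $\phi'\in L^{2}(m_{q})$. For $M$ this is immediate, since $M(b/t+\phi)=b/t+(b(e^{-t}-1)/t+e^{-t}\phi)$ and the term in parentheses lies in $L^{2}(m_{q})$ (bounded with exponential decay, plus a contraction of $\phi$). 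The function $\bar\phi_{q,\mu}$ is entire and of sub-exponential growth (being, up to normalisation, a ${}_{0}F_{1}$ in $t$), hence lies in $L^{2}(m_{q})$ since $\Re(q)>0$. The remaining point is $N_{q}$: conjugating it by the isometry $\phi\mapsto\phi(t)t^{q-1/2}e^{-t/2}$ from $L^{2}(m_{q})$ onto $L^{2}(0,\infty)$ turns its kernel into $e^{-(s+t)/2}J_{2q-1}(2\sqrt{st})$, which is Hilbert--Schmidt by the standard bounds on $J_{2q-1}$ (using $\Re(q)>0$ near the origin), so $N_{q}$ is bounded on $L^{2}(m_{q})$ and $N_{q}\phi\in L^{2}(m_{q})$; and $N_{q}(1/t)$ also lies in $L^{2}(m_{q})$ — from the identity just proved one finds $\BB_{q}[N_{q}(1/t)](x)=\Gamma(2q-1)(x+1)^{1-2q}$, equivalently $N_{q}(1/t)(t)=t^{1-2q}\gamma(2q-1,t)$ with $\gamma$ the lower incomplete Gamma function, which is bounded near $0$ (so, in particular, carries no $1/t$ singularity) and decays fast enough against $m_{q}$ at infinity. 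The pertinent facts about the weighted space $L^{2}(m_{q})$ and the kernel of $N_{q}$ are those established in \cite{BGI} (see also \cite{Is}). Hence $\PP_{q}^{\pm}f\in\HH_{q,\mu}^{2}$.

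I expect the genuine obstacles to be confined to this last part: the algebraic identities are routine changes of variable, but making the treatment of $N_{q}$ on $L^{2}(m_{q})$ fully rigorous — its boundedness and the precise behaviour of $N_{q}(1/t)$ — requires the detailed analysis of the weighted Hilbert space, and one must also be careful about the Fubini interchange, about the convergence of the $\BB_{q}$-- and $N_{q}$--integrals for merely $L^{2}$ (rather than $L^{1}$) densities, and about the analytic continuation in $q$ wherever the integrals are only conditionally defined.
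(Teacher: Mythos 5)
Your proposal is correct and follows the same skeleton as the paper's proof: act on each of the three summands of an element of $\HH_{q,\mu}^{2}$ separately, identify the operators $M$ and $N_q$ and the function $\bar\phi_{q,\mu}$, and check that each image lands back in the allowed form $c'\mu^{1/x}x^{-2q}+\BB_q[b'/t+\phi']$. The differences are in execution rather than strategy. Where you derive $\PP_{1,q}\BB_q[\psi]=\BB_q[N_q\psi]$ from scratch via the Bessel--Laplace formula and Fubini, the paper simply cites \cite{DEIK} for the action on $L^2(m_q)$ densities and reserves its computations for the two genuinely new ingredients: the term $b/t$ (which it handles by expanding $J_{2q-1}$ in its power series and resumming, arriving at the same $\Gamma(2q-1)(1+x)^{1-2q}$ you obtain) and the term $c\,\mu^{1/x}x^{-2q}$ (which it handles by taking the inverse Laplace transform of $\mu^{1+1/x}x^{-2q}$ to \emph{define} $\bar\phi_{q,\mu}$, whereas you verify \eqref{funzione-bar} forward by term-by-term integration --- arguably the cleaner direction). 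Your closed form $N_q(1/t)=t^{1-2q}\gamma(2q-1,t)$ and the resulting boundedness near $0$ and $O(t^{1-2q})$ decay are exactly what the paper records afterwards in Remark \ref{immagin-p1}, eq.\ \eqref{nq-1t}; and your Hilbert--Schmidt argument for $N_q$ on $L^2(m_q)$ is the content of the cited result from \cite{BGI} (Theorem \ref{spettro-N}, where trace class is in fact established). The caveats you flag at the end --- absolute convergence justifying Fubini for merely $L^2$ densities, and the interpretation of the weighted space for complex $q$ --- are real but resolvable (for $x\in B$ one has $\Re(1/x)>1$, so $|e^{-t/x}|\le e^{-t}$ and the Gaussian-type bound $e^{-t+2\sqrt{st}-s}$ off the diagonal makes the double integral absolutely convergent), and the paper sidesteps them by deferring to \cite{DEIK,BGI}.
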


\begin{proof}
To prove (\ref{p-s0-hq}), we study the action of the transfer
operators on the different terms of a function $f \in
\HH_{q,\mu}^{2}$, for $q$ and $\mu$ fixed.

\noindent
First of all, it is immediate that $\PP_{0,q}\left(\frac{
\mu^{\frac 1 x}}{x^{2q}} \right) = \mu\, \frac{ \mu^{\frac 1
x}}{x^{2q}}$ and $\PP_{1,q}\left(\frac{ \mu^{\frac 1 x}}{x^{2q}}
\right) = \mu^{x+1}$. It follows that the function $\bar
\phi_{q,\mu}$ we are looking for has to satisfy
$$
\mu^{x+1} = \BB_q[\bar \phi_{q,\mu}] = \frac{1}{x^{2q}} \, \LL
[t_+^{2q-1} \bar \phi_{q,\mu}(t)]\left(\frac 1 x\right)
$$
hence
$$
\bar \phi_{q,\mu}(t) = \frac{1}{t_+^{2q-1}}\, \LL^{-1} \left[
\frac{\mu^{1+\frac 1 x}}{x^{2q}} \right] (t)
$$
Expression (\ref{funzione-bar}) follows by a straightforward
computation. It remains to prove that the function on the right
hand side of (\ref{funzione-bar}) admits a $\BB_q$ transform. If
we differentiate term by term, it follows that for any $n\in \N$
there exists a positive polynomial $p_n(t)$ of degree $n$ such
that
$$
\frac{d}{dt}\, \bar \phi_{q,\mu} (t) \le p_n(t) + \frac{|\log
\mu|}{n+2q}\ |\bar \phi_{q,\mu}(t)|
$$
From this it follows that the behaviour of $\bar \phi_{q,\mu}$ at
infinity is slower than $e^{\eps t}$ for all $\eps >0$. Moreover
$\bar \phi_{q,\mu}(0) = \frac{\mu}{\Gamma(2q)}$. Hence actually
$\bar \phi_{q,\mu} \in L^2(m_q)$.

\noindent
For $f \in \HH_{q,\mu}^{2}$ we write $f(x) = \frac{c\, \mu^{\frac 1
x}}{x^{2q}} + \BB_q [\psi]$, where $\psi(t) = \frac b t +
\phi(t)$. Expressions (\ref{p-s0-hq}) have been proved in \cite[Proposition 2.3]{DEIK}
for functions $\phi$ in $L^2(m_q)$. It remains to prove that the
same holds for the term $\frac b t$. For $\PP_{0,q}$ we simply
have
$$
\PP_{0,q} \left( \BB_q \left[ \frac 1 t\right] \right) = \frac{1}{x^{2q}}\
\int_{0}^{\infty} \ e^{- \frac t x}\, \frac{e^{-t}}{t}\, t^{2q-1}
\, dt = \BB_q \left[ \frac{e^{-t}}{t} \right]
$$
For $\PP_{1,q}$ we have to prove
$$
\PP_{1,q} \left( \BB_q \left[ \frac 1 t \right] \right) = \BB_q
\left[ N_q \left( \frac 1 t \right) \right]
$$
Using the power series expansion
$$
J_p(x) = \frac{x^p}{2^p}\ \sum_{m=0}^\infty\, \frac{(-1)^m\,
x^{2m}}{2^{2m}\, m!\, \Gamma(m+p+1)}
$$
for the Bessel function $J_{2q-1}$, we have
$$
N_q \left( \frac 1 t \right) = \int_0^\infty\ \sum_{m=0}^\infty\,
\frac{(-1)^m\, t^m}{m!\, \Gamma(m+2q)} \ s^{m+2q-2}\, e^{-s}\ ds =
\sum_{m=0}^\infty\, \frac{(-1)^m\, t^m}{m!\, \Gamma(m+2q)}\
\LL[s^{m+2q-2}](1) =
$$
$$
=\sum_{m=0}^\infty\, \frac{(-1)^m\, \Gamma(m+2q-1)}{m!\,
\Gamma(m+2q)}\ t^m
$$
It follows that
$$
\BB_q \left[ N_q \left( \frac 1 t \right) \right] =
\frac{1}{x^{2q}}\, \LL \left[ t_+^{2q-1} N_q \left( \frac 1 t
\right) \right]\left(\frac 1 x\right) = \frac{1}{x^{2q}}\,
\sum_{m=0}^\infty\, \frac{(-1)^m\, \Gamma(m+2q-1)}{m!\,
\Gamma(m+2q)}\ \LL \left[ t_+^{m+2q-1} \right]\left(\frac 1 x\right)=
$$
$$
=\sum_{m=0}^\infty\, \frac{(-1)^m\, \Gamma(m+2q-1)}{m!}\ x^m =
\Gamma(2q-1)\ (1+x)^{1-2q}
$$
Moreover,
$$
\PP_{1,q} \left( \BB_q \left[ \frac 1 t \right] \right) =
\int_0^\infty\ e^{-tx}\, e^{-t}\, t^{2q-2}\, dt = \LL \left[
e^{-t}\, t^{2q-2} \right] = \Gamma(2q-1)\, (1+x)^{1-2q}
$$
This completes the proof.
\end{proof}

\begin{remark} \label{immagin-p1}
Notice that the $\PP_{1,q}(\HH_{q,\mu}^{2})$ is contained in the set
of functions which are $\BB_q$ transform of functions in
$L^2(m_q)$. This follows from the fact that $\bar \phi_{q,\mu} \in
L^2(m_q)$, $N_q(\phi) \in L^2(m_q)$ for any $\phi \in L^2(m_q)$
(see \cite{BGI}), and writing
\begin{equation} \label{nq-1t} 
N_q  \left( \frac 1 t \right) = \sum_{m=0}^\infty\,
\frac{(-1)^m}{m!\, (m+2q-1)}\ t^m = \frac{\int_0^t\
s^{2q-2}\, e^{-s}\ ds}{t^{2q-1}} = \frac{\Gamma(2q-1) - \int_{t}^{\infty}\ s^{2q-2}\, e^{-s}\ ds}{t^{2q-1}}
\end{equation}
where the third term makes sense only for $\xi> \frac 1 2$ and the last one for all $\xi>0$ and $t>0$. In particular it follows from (\ref{nq-1t}) that $N_q  \left( \frac 1 t \right)$ is bounded as $t\to 0^{+}$, and behaves like $O(t^{1-2q})$ as $t\to +\infty$. Moreover using the Dirac delta function $\delta_{-\log \mu}$ we can write
$$
\frac{\mu^{\frac 1 x}}{x^{2q}} = \BB_q \left[ \frac{\delta_{-\log \mu}(t)}{t^{2q-1}} \right]\qquad \qquad \frac{\mu \, \mu^{\frac 1
x}}{x^{2q}} = \BB_q \left[ M\left( \frac{\delta_{-\log \mu}(t)}{t^{2q-1}} \right) \right]
$$
and
$$
\bar \phi_{q,\mu} (t) = \mu\ \frac{J_{2q-1} (2 \sqrt{-t\, \log
\mu})}{(\sqrt{-t\, \log \mu})^{q- \frac 1 2}} = N_q \left(
\frac{\delta_{-\log \mu}(t)}{t^{2q-1}} \right)
$$
where $J_p$ denotes the Bessel function. Hence for the spaces
$$
\KK_{q,\mu}^{2}:= \set{\chi(t) = c\, \frac{\delta_{-\log \mu}(t)}{t^{2q-1}} + \frac b t + \phi(t) \ :\ c,b \in \C,\ \phi \in
L^2(m_q)}
$$
one finds
$$
\HH_{q,\mu}^{2} = \BB_q [\KK_{q,\mu}^{2}]
$$
and moreover
\begin{equation*}
\begin{array}{c}
\PP_{0,q} \left( \frac{c\, \mu^{\frac 1 x}}{x^{2q}} + \BB_{q}
\left[  \psi \right] \right) = \BB_{q} \left[ M\left(c\,
\frac{\delta_{-\log \mu}(t)}{t^{2q-1}} + \psi \right) \right] \\[0.3cm]
\PP_{1,q} \left( \frac{c\, \mu^{\frac 1 x}}{x^{2q}} + \BB_{q}
\left[  \psi \right] \right) = \BB_q \left[ N_q \left( c\,
\frac{\delta_{-\log \mu}(t)}{t^{2q-1}} + \psi \right) \right]
\end{array}
\end{equation*}
that is for any $\chi \in \KK_{q,\mu}^{2}$
$$
\PP_{0,q} (\BB_{q} \left[  \chi \right] ) = \BB_q [M(\chi)] \qquad
\qquad \PP_{1,q} (\BB_{q} \left[  \chi \right] ) = \BB_q
[N_q(\chi)]
$$
The spaces $\KK_{q,\mu}^{2}$ can be written as
$$
\KK_{q,\mu}^{2} = \mbox{Span}_\C \, \left( \frac{\delta_{-\log \mu}(t)}{t^{2q-1}} \right) \oplus \mbox{Span}_\C \, \left(
\frac{1}{t} \right) \oplus L^2(m_q)
$$
\end{remark}

\begin{remark} \label{da-2-a-1}
Notice that (\ref{p-s0-hq}) holds also on $\HH^{1}_{q,\mu}$ for $\PP_{0,q}$ but not for $\PP_{1,q}$. In fact $N_{q}(\phi)$ is not defined for all functions $\phi \in L^{1}(m_{q})$.
\end{remark}

\begin{theorem} \label{eigenfunc-acca}
If $f\in \HH(B)$ and $\PP_{q}^{\pm} f = \lambda f$ with $\lambda
\in \C \setminus [0,1)$ then $f(x) \in \HH_{q,\lambda}^{2}$.
\end{theorem}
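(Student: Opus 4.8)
The plan is to reduce to the three-term equation (\ref{lewis}), iterate it, and pass to the limit, recognising the two summands of Definition \ref{spazi-acca}. By Proposition \ref{proprieta-farey}, $f$ extends holomorphically to $\set{\Re(x)>0}$, satisfies $\JJ_q f=\pm f$, and satisfies (\ref{lewis}); since $(\PP_{1,q}f)(x)=(\JJ_q f)(x+1)$, for an eigenfunction $(\PP_{1,q}f)(x)=\pm f(x+1)$ and (\ref{lewis}) becomes $\lambda f(x)=(\PP_{0,q}f)(x)+f(x+1)$. Applying $\PP_{0,q}^{\,k}$ to this identity and telescoping the resulting relations weighted by $\lambda^{-k-1}$ yields, for every $n\ge1$,
\[
f=\lambda^{-n}\PP_{0,q}^{\,n}f\ \pm\ \lambda^{-1}\sum_{k=0}^{n-1}\lambda^{-k}\,\PP_{0,q}^{\,k}\PP_{1,q}f,\qquad (\PP_{0,q}^{\,n}f)(x)=(nx+1)^{-2q}\,f\!\Big(\tfrac{x}{nx+1}\Big).
\]
I would let $n\to\infty$: the series should produce a term $\BB_q[\tfrac bt+\phi]$ with $\phi\in L^2(m_q)$, and the remainder the singular term $\tfrac{c\,\lambda^{1/x}}{x^{2q}}$.

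For the series: $\PP_{1,q}f$ extends holomorphically across $x=0$ (as $x\to0$ its argument $\tfrac1{x+1}\to1$, where $f$ is holomorphic), so by the mapping properties of $\BB_q$ on $L^2(m_q)$ from \cite{BGI,Is} we may write $\PP_{1,q}f=\BB_q[\phi_0]$ with $\phi_0\in L^2(m_q)$, and Proposition \ref{azione-su-spazi} then gives $\PP_{0,q}^{\,k}\PP_{1,q}f=\BB_q[\,e^{-kt}\phi_0(t)\,]$. Hence the partial sum equals $\pm\,\BB_q\big[\lambda^{-1}\tfrac{1-(\lambda^{-1}e^{-t})^{n}}{1-\lambda^{-1}e^{-t}}\,\phi_0(t)\big]$, and here the hypothesis $\lambda\notin[0,1)$ is used decisively: $t\mapsto\lambda-e^{-t}$ has no zero on $(0,\infty)$ when $\lambda\ne1$ and only a simple zero at $t=0$ when $\lambda=1$, so $\tfrac{\phi_0(t)}{\lambda-e^{-t}}=\tfrac bt+\phi(t)$ with $\phi\in L^2(m_q)$, where $b=0$ unless $\lambda=1$ (in which case $b=\phi_0(0)$, via $\tfrac1{1-e^{-t}}=\tfrac1t+\tfrac12+O(t)$), and the partial sums converge to $\pm\BB_q[\tfrac bt+\phi]$ — directly when $|\lambda|>1$, and otherwise by analytic continuation in the auxiliary variable $z=\lambda^{-1}$ across $[1,+\infty)$, which is admissible exactly because $\lambda\notin[0,1)$. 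Checking that the regular part lands in $L^2(m_q)$ and not merely in $L^1(m_q)$ (cf. Remark \ref{da-2-a-1}) is part of this step.

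Then $\lambda^{-n}\PP_{0,q}^{\,n}f=f-(\text{partial sum})$ converges locally uniformly on $\set{\Re(x)>0}$ to some $L$, which by continuity of $\PP_{0,q}$ solves $\PP_{0,q}L=\lambda L$; in the variable $u=1/x$ this reads $\widetilde L(u+1)=\lambda\widetilde L(u)$ for $\widetilde L(u):=u^{-2q}L(1/u)$, forcing $L(x)=\lambda^{1/x}x^{-2q}\,p(1/x)$ with $p$ holomorphic and $1$-periodic on $\set{\Re(u)>0}$. A bounded $1$-periodic holomorphic function on a half-plane factors through $w=e^{2\pi iu}$ into a bounded holomorphic function on $\C\setminus\set{0}$ and is therefore constant, so it remains only to bound $p$, i.e.\ to show $f(x)=O\big(|\lambda|^{\Re(1/x)}|x|^{-2\Re(q)}\big)$ as $x\to0$ in $B$; this a priori control of $f$ near the neutral fixed point — ruling out growth faster than $|\lambda|^{1/x}$, so that $p$ collapses to a scalar $c$ and $L=\tfrac{c\,\lambda^{1/x}}{x^{2q}}$ — is the main obstacle. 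I would seek it in (\ref{lewis}) rewritten as $f\big(\tfrac{x}{x+1}\big)=(x+1)^{2q}\big(\lambda f(x)-f(x+1)\big)$: iterating along integer translations shows $\lambda^{-N}f(N)$ has a finite limit $C$ as $N\to\infty$ through the integers, whence $f(1/N)\sim\pm C\,N^{2q}\lambda^{N}$, and one further use of (\ref{lewis}) interpolates this to $f(x)=\pm C\,\lambda^{1/x}x^{-2q}(1+o(1))$, giving $p\equiv\pm C$. The same continuation device is also what is needed for the borderline values $|\lambda|\le1$ with $\lambda\ne1$ (and $\lambda=1$ with $\Re(q)\le\tfrac12$), where the naive geometric series does not converge absolutely; it is in these estimates, together with the detailed study of $L^2(m_q)$ and of $N_q$ in \cite{BGI}, that the real work lies. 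Combining the two pieces, $f=\tfrac{c\,\lambda^{1/x}}{x^{2q}}+\BB_q[\tfrac bt+\phi]$ with $\phi\in L^2(m_q)$, i.e.\ $f\in\HH^2_{q,\lambda}$.
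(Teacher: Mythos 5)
Your overall architecture --- reduce to (\ref{lewis}), invert $1-\lambda^{-1}\PP_{0,q}$ on the $\BB_q[L^2(m_q)]$ part, and recognise $\frac{\phi_0(t)}{\lambda-e^{-t}}=\frac bt+\phi$ --- is sound and in fact anticipates the functional relation of Theorem \ref{q-vs-p}; the observation that $\PP_{1,q}f$ is holomorphic on $\{\Re(x)>-1\}$ and hence lies in $\BB_q[L^2(m_q)]$ is also correct (it is essentially Proposition \ref{immersione}). But two steps do not go through as written. First, for $|\lambda|<1$ with $\lambda\notin[0,1)$ --- a range the theorem must cover --- the series $\sum_k\lambda^{-k}\PP_{0,q}^k\PP_{1,q}f$ diverges outright: since $(\PP_{0,q}^k\PP_{1,q}f)(x)=(kx+1)^{-2q}(\PP_{1,q}f)(x/(kx+1))$, its $k$-th term has modulus of order $|\lambda|^{-k}k^{-2\Re(q)}$. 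And since $f$, hence $\phi_0$, exists only for the one fixed $\lambda$, there is no family in which to ``analytically continue in $z=\lambda^{-1}$''; that device has no object to act on. (One can salvage this by defining $\tilde f:=\BB_q[(\lambda-e^{-t})^{-1}\phi_0]$ directly --- the multiplier is bounded on $(0,\infty)$ for $\lambda\notin[0,1]$ --- and checking $(1-\lambda^{-1}\PP_{0,q})(f\mp\tilde f)=0$, but that only shifts all the weight onto the second step.)

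Second, and more seriously, the identification of the kernel element $L$ with $c\,\lambda^{1/x}x^{-2q}$ is not achieved by your sketch. The kernel of $1-\lambda^{-1}\PP_{0,q}$ on $\HH(\{\Re(x)>0\})$ consists of all $\lambda^{1/x}x^{-2q}p(1/x)$ with $p$ an arbitrary $1$-periodic holomorphic function on the right half-plane, and to conclude $p\equiv c$ you need $p$ bounded on a full vertical period strip, i.e.\ control of $\lambda^{-u}u^{2q}f(1/u)$ as $\Im(u)\to\pm\infty$ with $\Re(u)$ bounded. Your proposed input --- $\lambda^{-N}f(N)\to C$ along $N\in\N$ --- determines $p$ only at the single point of the period circle to which all integers project; $p(u)=e^{2\pi iu}$ agrees with a constant on $\N$ yet is unbounded as $\Im(u)\to-\infty$, so no interpolation from the integer asymptotics can close this. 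The paper supplies exactly the missing uniform bound by a different device: telescoping (\ref{lewis}) gives polynomial growth of $f$ (or of $g=f\lambda^{-x}$) on half-planes, uniformly in the imaginary direction, so Lemma \ref{anti-laplace} represents both sides of (\ref{lewis}) as Laplace transforms of generalised functions supported on a half-line; dividing by $\lambda-e^{-t}$ within that class leaves an ambiguity of exactly one delta function, which becomes the single term $c\,\lambda^{1/x}x^{-2q}$ after applying $\JJ_q$. That Paley--Wiener-type step is the substitute for your periodicity argument, and it is the part your proposal is missing.
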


\begin{proof}
We use the properties of the inverse Laplace transform. In
particular, we recall
\begin{lemma} \label{anti-laplace}
A function $u(x)$ is the Laplace transform of a generalised
function if and only if there exists $k\in \R$ such that $u(x)$ is
holomorphic in the half-plane $\set{\Re(x)>k}$ and
$$
|u(x)| \le M \, (1+|x|)^{m} \qquad \Re(x)>k
$$
for some constants $M,m$.
\end{lemma}
\noindent
Let now $f \in \HH(B)$ satisfy $\PP_{q}^{\pm} f = \lambda f$ for
some $\lambda \in \C \setminus [0,1)$. We study separately the
cases $\lambda =1$ and $\lambda \in \C \setminus [0,1]$.
\noindent
Let us first consider the case $\lambda =1$. By Proposition
\ref{proprieta-farey}.(ii), the function $f$ satisfies eq.
(\ref{lewis}), and we can write
\begin{equation} \label{expansion}
f(x)= \sum_{n=0}^{\infty}\ a_{n}\, (x-1)^{n} \qquad \text{for}\ \ \set{|x-1|<1}
\end{equation}
where the sequence $\set{a_{n}}$ satisfies $\limsup_{n}
(a_{n})^{\frac 1 n} \le 1$, and the convergence is uniform on any
compact set contained in $\set{|x-1|<1}$. By Lemma \ref{anti-laplace}, the
right-hand side of eq. (\ref{lewis}) is the Laplace transform
of a generalised function. By using (\ref{power-trans}) and
(\ref{expansion}) and known properties of the Laplace transform, we
obtain
\begin{equation} \label{trans-right}
\LL^{-1} \left[ \left( \frac{1}{x+1} \right)^{2q} \ f\left(
\frac{x}{x+1} \right) \right] (t)= t_+^{2q-1} \ e^{-t} \
\sum_{n=0}^\infty\ \frac{(-1)^n a_n}{\Gamma(n+2q)}\ t^n
\end{equation}
The left hand-side of eq. (\ref{lewis}) is then the Laplace
transform of a generalised function. Moreover, from eq.
(\ref{lewis}) with $\lambda=1$ we obtain
$$
f(x+n) - f(x) = \sum_{h=0}^{n-1}\, (f(x+h+1)-f(x+h)) =  -
\sum_{h=0}^{n-1}\, \frac{f\left( \frac{x+h}{x+h+1}
\right)}{(x+h+1)^{2q}}
$$
Hence $f$ satisfies the assumptions of Lemma \ref{anti-laplace}
for any $k>0$. Hence
\begin{equation} \label{trans-left}
\LL^{-1} [f(x)-f(x+1)] = (1-e^{-t}) \, \LL^{-1} [f]
\end{equation}
Putting together (\ref{trans-right}) and (\ref{trans-left}), we
obtain that (\ref{lewis}) implies that there exists a constant
$c\in \C$ such that
\begin{equation} \label{trans-all}
f(x) = c + \LL \left[ \frac{t_+^{2q-1} \ e^{-t}}{1 - e^{-t}} \
\sum_{n=0}^\infty\ \frac{(-1)^n a_n t^n}{\Gamma(n+2q)} \right](x)
\end{equation}
Moreover, since by Proposition \ref{proprieta-farey}.(ii) it holds
$\JJ_{q}f = \pm f$, we can apply the operator $\JJ_{q}$ to the
right-hand side of (\ref{trans-all}) to have $\pm f$. It is a
straightforward computation that applying $\JJ_{q}$ to the Laplace
transform gives the $\BB_{q}$ transform defined in
(\ref{borel-def}), hence
\begin{equation} \label{trans-finale}
f(x) = \pm \left( \frac{c}{x^{2q}} + \BB_{q} \left[
\frac{e^{-t}}{1-e^{-t}}\ \sum_{n=0}^\infty\ \frac{(-1)^n a_n
t^n}{\Gamma(n+2q)} \right](x) \right)
\end{equation}
and the thesis follows for $\lambda=1$, with $\mu=1$, $b=\pm
\frac{a_{0}}{\Gamma(2q)}$ and $\phi(t)$ as in (\ref{typical}).
\noindent
Let us now consider the case $\lambda \in \C \setminus [0,1]$. Let
us first assume that $f$ is bounded at $x=0$. In this case,
$\JJ_{q}f=\pm f$ implies that $|f(x)| = \OO(|x|^{-2\xi})$ as
$\Re(x)\to \infty$, hence $f$ satisfies the assumptions of Lemma
\ref{anti-laplace} with $k=0$. In particular, using again the
expansion (\ref{expansion}), we can write eq.
(\ref{trans-right}) and the analogous of (\ref{trans-left}) which
is
$$
\LL^{-1} [\lambda f(x)-f(x+1)] = (\lambda-e^{-t}) \, \LL^{-1}[f]
$$
Hence in this case we get
\begin{equation} \label{trans-all-not-1}
f(x) = \LL \left[ \frac{t_+^{2q-1} \ e^{-t}}{\lambda - e^{-t}} \
\sum_{n=0}^\infty\ \frac{(-1)^n a_n t^n}{\Gamma(n+2q)} \right](x)
\end{equation}
By using again the same argument as before applying the involution
$\JJ_{q}$ to the right-hand side of (\ref{trans-all-not-1}), it
follows that
\begin{equation} \label{trans-finale-2}
f(x) = \BB_{q} \left[  \frac{e^{-t}}{\lambda-e^{-t}}\,
\sum_{n=0}^\infty\ \frac{(-1)^n a_n t^{n}}{\Gamma(n+2q)} \right](x)
\end{equation}
hence $f\in \HH_{q,\lambda}^{2}$ with $c=b=0$, and
\begin{equation} \label{fi-not-1}
\phi(t) = \frac{e^{-t}}{\lambda-e^{-t}}\, \sum_{n=0}^\infty\
\frac{(-1)^n a_n t^{n}}{\Gamma(n+2q)}
\end{equation}
which can be written in the form (\ref{typical}) with $\phi(0) =
\frac{1}{\lambda-1}\, \frac{a_0}{\Gamma(2q)}$.
\noindent
We finish the proof by studying the case $\lambda \in \C \setminus
[0,1]$ when the function $f(x)$ is not bounded at $x=0$. Let us
consider the function $g(x):= \frac{f(x)}{\lambda^{x}}$. By
eq. (\ref{lewis}) we can write
$$
f(x+n)= \lambda^{n}\, f(x) - \sum_{h=0}^{n-1}\ \lambda^{(n-h-1)}\,
\frac{f\left( \frac{x+h}{x+h+1} \right)}{(x+h+1)^{2q}}
$$
which implies that $g$ satisfies
$$
g(x+n) = g(x) - \sum_{h=0}^{n-1}\ \lambda^{-(x+h+1)}\,
\frac{f\left( \frac{x+h}{x+h+1} \right)}{(x+h+1)^{2q}}
$$
Hence depending on whether $|\lambda|\le 1$ or $|\lambda|> 1$,
either $f$ or $g$ satisfy the assumptions of Lemma
\ref{anti-laplace} for any $k>0$. Since eq.
(\ref{trans-right}) still applies, if $|\lambda|\le 1$ we can
repeat the same argument as above and obtain
(\ref{trans-all-not-1}). If $|\lambda|> 1$, eq. (\ref{lewis})
implies that $g$ satisfies
\begin{equation} \label{lewis-g}
g(x)-g(x+1) = \frac{1}{\lambda^{x+1}} \left( \frac{1}{x+1}
\right)^{2q}  f\left( \frac{x}{x+1} \right) = \sum_{n=0}^{\infty}\
(-1)^{n} a_{n}\, \frac{e^{-(x+1)\log \lambda}}{(x+1)^{n+2q}}
\end{equation}
where we have again used the expansion (\ref{expansion}) for $f$.
Both the right-hand side of (\ref{lewis-g}) and $g$ satisfy the
assumptions of Lemma \ref{anti-laplace}. Hence we can apply
$\LL^{-1}$ to both sides of (\ref{lewis-g}) and write the
analogous of (\ref{trans-left}) for $g$, to obtain that there
exists $c\in \C$ such that
\begin{equation} \label{trans-all-g}
g(x) = c + \LL \left[ \frac{(t- \log \lambda)_+^{2q-1} \ e^{-t}}{1
- e^{-t}} \ \sum_{n=0}^\infty\ \frac{(-1)^n a_n }{\Gamma(n+2q)} \,
(t- \log \lambda)^{n} \right](x)
\end{equation}
where we have used the notation $(t- \log \lambda)_+ := (t-\log
|\lambda|)_+ - i \arg \lambda$, and the relation
$$
\LL^{-1}\left[ \frac{e^{-(x+1)\log \lambda}}{(x+1)^{n+2q}} \right](t)
= \frac{e^{-t} (t- \log \lambda)_+^{n+2q-1}}{\Gamma(n+2q)}
$$
Now, since $\JJ_{q}f = \pm f$, we obtain from (\ref{trans-all-g})
and the definition of (\ref{borel-def})
\begin{equation} \label{trans-finale-3}
f(x) = \pm \frac{\lambda^{\frac 1 x}}{x^{2q}}\, g\left( \frac 1 x
\right) = \pm \left( \frac{c\, \lambda^{\frac 1 x}}{x^{2q}} +
\BB_{q} \left[  \frac{e^{-t}}{\lambda-e^{-t}}\, \sum_{n=0}^\infty\
\frac{(-1)^n a_n t^{n}}{\Gamma(n+2q)} \right] \right)
\end{equation}
and the thesis follows with $b=0$ and $\phi$ as in
(\ref{fi-not-1}).
\end{proof}

\begin{corollary} \label{autofunz}
If $f\in \HH(B)$ and $\PP_{q}^{\pm} f = \lambda f$ with $\lambda
\in \C \setminus [0,1)$ then it has the form
\begin{equation} \label{forma-autofunz}
f(x) = \frac{c\, \lambda^{\frac 1 x}}{x^{2q}} +
\frac{\Gamma(2q-1)}{\Gamma(2q)}\, \frac b x + \BB_q[\phi]
\end{equation}
with $c,b \in \C$ and $\phi \in L^2(m_q)$ with $\phi(0)$ finite and $\phi(t) = \phi(0) + O(t)$ as $t\to 0^{+}$. Moreover if $\lambda
\not= 1$ then $b=0$, if $\lambda =1$ then $b = f(1)$. If $f$ is an eigenfunction of $\PP_{q}^{-}$ then $b=f(1)=0$. Finally the function $\phi \in L^2(m_q)$ is such that $\BB_q[\phi]$ is bounded as $x\to 0$.
\end{corollary}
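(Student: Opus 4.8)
The plan is to read this as a quantitative refinement of the proof of Theorem~\ref{eigenfunc-acca}, rerunning that argument while tracking the behaviour of the functions involved at $t=0$ and $t=\infty$. Recall that in each of the three cases treated there ($\lambda=1$; $\lambda\in\C\setminus[0,1]$ with $f$ bounded at $x=0$; $\lambda\in\C\setminus[0,1]$ with $f$ unbounded at $x=0$) one ends up, after applying the involution $\JJ_q$, with an expression
$$
f(x)=\epsilon\,\Big(\frac{c\,\lambda^{1/x}}{x^{2q}}+\BB_q[\psi_\lambda]\Big),\qquad
\psi_\lambda(t):=\frac{e^{-t}}{\lambda-e^{-t}}\,\sum_{n\ge 0}\frac{(-1)^n a_n}{\Gamma(n+2q)}\,t^n,
$$
where $\epsilon=\pm1$ is the sign in $\JJ_q f=\epsilon f$ (Proposition~\ref{proprieta-farey}.(ii)), $\{a_n\}$ are the Taylor coefficients of $f$ at $x=1$ (so $f(1)=a_0$ and, since $f\in\HH(\{\Re(x)>0\})\supseteq\{|x-1|<1\}$, $\limsup_n|a_n|^{1/n}\le1$), and $c=0$ unless $|\lambda|>1$ with $f$ unbounded at $x=0$. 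Absorbing $\epsilon$ into the constant and into the function $\phi$ to be produced, the statement splits into: (i) a local analysis of $\psi_\lambda$ at $t=0$, producing the $\tfrac bx$ term and the regularity of $\phi$; and (ii) the boundedness near $x=0$ of the remaining term $\BB_q[\phi]$.

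For (i): since $\limsup_n|a_n|^{1/n}\le1$, the series $\sum_{n\ge0}\tfrac{(-1)^na_n}{\Gamma(n+2q)}t^n$ is entire and equals $\tfrac{a_0}{\Gamma(2q)}$ at $t=0$, whereas $\tfrac{e^{-t}}{\lambda-e^{-t}}$ is holomorphic at $t=0$ when $\lambda\ne1$, with value $\tfrac1{\lambda-1}$, and has there a simple pole of residue $1$ when $\lambda=1$. Hence, for $\lambda\ne1$, $\psi_\lambda$ is holomorphic in a disc about $t=0$; so the decomposition of Definition~\ref{spazi-acca} has $b=0$, $\phi=\pm\psi_\lambda$, with $\phi(0)=\pm\tfrac{a_0}{\Gamma(2q)(\lambda-1)}$ finite and $\phi(t)=\phi(0)+O(t)$ by analyticity. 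For $\lambda=1$, $\psi_1(t)=\tfrac{a_0}{\Gamma(2q)\,t}+\phi_0(t)$ with $\phi_0$ holomorphic at $t=0$, and by~(\ref{bq-su-t}) the singular part contributes $\BB_q[\tfrac{a_0}{\Gamma(2q)\,t}]=\tfrac{\Gamma(2q-1)}{\Gamma(2q)}\,\tfrac{a_0}{x}$, so the coefficient of $\tfrac{\Gamma(2q-1)}{\Gamma(2q)}\tfrac1x$ is $b=\pm a_0=\epsilon\,f(1)$; since $\JJ_q f=-f$ forces $f(1)=0$ (Proposition~\ref{proprieta-farey}.(ii)), this reads $b=f(1)$ in all cases, with $b=f(1)=0$ when $f$ is an eigenfunction of $\PP_q^-$. (Here $q\ne\tfrac12$ is in force throughout, as already required for $\HH_{q,\lambda}^2$ and for~(\ref{bq-su-t}).) Finally $\phi\in L^2(m_q)$ is part of Theorem~\ref{eigenfunc-acca}, and is in any case immediate: $\tfrac{e^{-t}}{\lambda-e^{-t}}$ is bounded on $\R^{+}$ (as $\lambda\notin(0,1)$) and the series is $O_\eps(e^{\eps t})$ for every $\eps>0$, whence $\phi(t)=O_\eps(e^{\eps t})$ and $e^{-\sigma t}\phi(t)\in L^1(\R^{+})$ for $\sigma$ large.

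For (ii): write $\BB_q[\phi](x)=x^{-2q}\,\LL\big(t_+^{2q-1}\phi(t)\big)(1/x)$ and put $y=1/x$; since $x\mapsto1/x$ maps $B$ biholomorphically onto $\{\Re(y)>1\}$, the claim is that $y^{2q}\,\LL\big(t_+^{2q-1}\phi(t)\big)(y)$ stays bounded as $y\to\infty$ in $\{\Re(y)>1\}$. Now $\phi$ extends holomorphically across $t=0$ and across $\R^{+}$ — its singularities are among the poles of $\tfrac{e^{-t}}{\lambda-e^{-t}}$, which avoid $[0,\infty)$ when $\lambda\notin[0,1]$, while for $\lambda=1$ the only one on $[0,\infty)$, at $t=0$, has been moved into the $\tfrac bx$ term — so $t_+^{2q-1}\phi(t)=\phi(0)\,t^{2q-1}+O(t^{2q})$ holds in a sector about the positive real axis, and $t_+^{2q-1}\phi(t)$ is integrable against $e^{-\sigma t}$ for $\sigma$ large. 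Watson's lemma, in the version valid for integrands holomorphic in a sector at the origin, then gives $y^{2q}\,\LL\big(t_+^{2q-1}\phi(t)\big)(y)\to\Gamma(2q)\,\phi(0)$ uniformly for $|\arg y|\le\tfrac\pi2+\delta-\eps'$, where $\delta>0$ is the half-aperture of the sector and $\eps'>0$; since $y=1/x$ has $|\arg y|<\tfrac\pi2$, the limit applies and is finite. Thus $\BB_q[\phi](x)\to\Gamma(2q)\,\phi(0)$ as $x\to0$ in $B$, and collecting the three terms yields~(\ref{forma-autofunz}).

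The one genuinely delicate point is this last one, specifically the approach $x\to0$ \emph{along $\partial B$}, where $\Re(1/x)=1$ stays bounded while $|\Im(1/x)|\to\infty$: after the change of variables $\BB_q[\phi](x)=\int e^{-s}\,\phi(xs)\,s^{2q-1}\,ds$ (integral along the ray $\arg s=-\arg x$) the integrand is not controlled by dominated convergence, and one really needs the holomorphy of $\phi$ near $\R^{+}$ — equivalently, that the only $t=0$ singularity of the $\BB_q$-argument is the explicit $t^{-1}$ term — in order to obtain $\LL\big(t_+^{2q-1}\phi(t)\big)(y)=O(|y|^{-2q})$ uniformly up to the imaginary axis. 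Everything else is bookkeeping on formulas already established in the proof of Theorem~\ref{eigenfunc-acca}.
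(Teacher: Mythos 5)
Your argument reaches the same conclusion as the paper, and agrees with it on the bookkeeping half: reading off the decomposition, the value of $b$, and the regularity $\phi(t)=\phi(0)+O(t)$ directly from the explicit formulas (\ref{trans-finale}), (\ref{trans-finale-2}), (\ref{trans-finale-3}) and from (\ref{bq-su-t}) is exactly what the paper does. Where you genuinely diverge is on the substantive analytic point, the boundedness of $\BB_q[\phi]$ as $x\to 0$. The paper treats this case by case: for $\lambda=1$ it expands $\LL\left(t_+^{2q-1}\psi\right)$ termwise into Hurwitz zeta functions via $\LL\left(t_+^{n+2q-1}e^{-t}/(1-e^{-t})\right)=\Gamma(n+2q)\,\zeta_{H}(n+2q,x+1)$ and then controls each term with the representation (\ref{prol-hurw}) before resumming; for $\lambda\neq1$ it simply refers back to the construction in Theorem \ref{eigenfunc-acca}. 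Your contour-rotation (sectorial Watson's lemma) argument instead exploits that, once the explicit $t^{-1}$ term is split off, the $\BB_q$-argument is holomorphic in a sector about $\R^{+}$ (the poles of $e^{-t}/(\lambda-e^{-t})$ miss $[0,\infty)$ for $\lambda\notin[0,1]$, and miss $(0,\infty)$ for $\lambda=1$). This treats all $\lambda\in\C\setminus[0,1)$ uniformly, yields the stronger conclusion $\BB_q[\phi](x)\to\Gamma(2q)\,\phi(0)$, and correctly identifies why the approach $x\to0$ along $\partial B$ (where $\Re(1/x)$ stays bounded) is the delicate regime. The paper's Hurwitz-zeta route is entirely real-variable and gives explicit constants; yours is shorter and more structural.

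Two repairs are needed. First, your parenthetical ``$c=0$ unless $|\lambda|>1$ with $f$ unbounded at $x=0$'' is false for $\lambda=1$: equation (\ref{trans-all}) carries an arbitrary additive constant, and for instance $f_q^+$ in (\ref{zagier-f}) has $c=\zeta_{R}(2q)/2\neq0$. This is harmless for the corollary, since (\ref{forma-autofunz}) allows arbitrary $c$, but the remark should be corrected. Second, to rotate the Laplace contour to $\arg t=\pm\delta$ and obtain the asymptotics uniformly up to $|\arg(1/x)|<\pi/2$, you need a growth bound on $t^{2q-1}\phi(t)$ along the rotated rays, not merely integrability against $e^{-\sigma t}$ on $\R^{+}$. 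It does hold here: $\limsup_n|a_n|^{1/n}\le1$ gives $\left|\sum_n(-1)^na_nt^n/\Gamma(n+2q)\right|=O\left(e^{(1+\eta)|t|}\right)$ for every $\eta>0$, while $\left|e^{-t}/(\lambda-e^{-t})\right|=O\left(e^{-\Re t}\right)$ on the sector away from the poles, so the integrand of the rotated integral decays once $\Re\left(y\,e^{\pm i\delta}\right)$ is large. But this estimate is precisely the hypothesis that makes the sectorial Watson's lemma applicable, so it must be stated rather than left implicit.
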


\begin{proof}
The form (\ref{forma-autofunz}) follows from the proof of Theorem \ref{eigenfunc-acca} (see (\ref{trans-finale}),(\ref{trans-finale-2}),(\ref{trans-finale-3})) and (\ref{bq-su-t}). That $b=0$ if $\lambda \not= 1$ follows from (\ref{trans-finale-2}) and (\ref{trans-finale-3}), and that $b=f(1)$ if $\lambda =1$ follows from (\ref{trans-finale}). That $f(1)=0$ for eigenfunctions of $\PP_{q}^{-}$ for any $\lambda$ follows from Proposition \ref{proprieta-farey}-(ii).
\noindent
We now have to prove boundedness of $\BB_q[\phi]$ as $x\to 0$. We start with the case $\lambda=1$. From eq. (\ref{trans-finale}), it follows that we have to prove boundedness of $\BB_{q}[\psi]$ with
\begin{equation} \label{la-psi-limit}
\psi(t) := \frac{e^{-t}}{1-e^{-t}}\ \sum_{n=0}^\infty\ \frac{(-1)^n a_n
t^n}{\Gamma(n+2q)} - \frac{a_{0}}{\Gamma(2q)}\, \frac 1 t
\end{equation}
Since for any $\phi \in L^{2}(m_{q})$ the definition of the $\BB_{q}$ transform implies
$$
\BB_q[\phi] \left( \frac 1 x \right) = x^{2q}\ \LL \left[ t_{+}^{2q-1}\, \phi \right] (x)
$$
we need to show that $x^{2q}\ \LL \left[ t_{+}^{2q-1}\, \psi \right](x)$ is bounded as $x \to \infty$. Since $\limsup_{n} (a_{n})^{\frac 1 n} \le 1$, the power series in (\ref{la-psi-limit}) converges uniformly for $t\in \R$. Hence
\begin{equation} \label{scambio-1}
\LL \left[ t_{+}^{2q-1}\, \psi \right] = \sum_{n=1}^\infty\ \frac{(-1)^n a_n}{\Gamma(n+2q)} \ \LL \left[ \frac{ t_{+}^{n+2q-1}\, e^{-t}}{1-e^{-t}} \right] + \frac{a_{0}}{\Gamma(2q)}\, \LL \left[ \frac{ t_{+}^{2q-1}\, e^{-t}}{1-e^{-t}} - t_{+}^{2q-2} \right]
\end{equation}
Moreover, since $e^{-t} <1$ for $t>0$, for all $n\ge 1$ and $\xi>0$ we get
\begin{equation} \label{scambio-2}
\LL \left[ \frac{ t_{+}^{n+2q-1}\, e^{-t}}{1-e^{-t}} \right] = \sum_{k=1}^\infty\ \LL \left[ e^{-kt} \, t^{n+2q-1}_{+} \right] = \sum_{k=1}^\infty\ \frac{\Gamma(n+2q)}{(x+k)^{n+2q}} = \Gamma(n+2q) \ \zeta_{H}(n+2q, x+1)
\end{equation}
where $\zeta_{H}(s,a)$ denotes the Hurwitz zeta function, and we have used uniform convergence of the series of functions in the first equality. The same argument works for $n=0$ if $\xi> \frac 1 2$, hence using analytic continuation of $\zeta_{H}(2q,x+1)$ to $\xi> \frac 1 2,\, q \not= \frac 1 2$, we write
\begin{equation} \label{scambio-3}
\LL \left[ \frac{ t_{+}^{2q-1}\, e^{-t}}{1-e^{-t}} - t_{+}^{2q-2} \right] = \Gamma(2q) \ \zeta_{H}(2q, x+1) - \Gamma(2q-1)\ \frac{1}{x^{2q-1}}
\end{equation}
Putting together (\ref{scambio-1}), (\ref{scambio-2}) and (\ref{scambio-3}), we get
\begin{equation} \label{con-hurw}
x^{2q}\, \LL \left[ t_{+}^{2q-1}\, \psi \right](x) =x^{2q}\, \sum_{n=0}^\infty\ (-1)^n a_n\, \zeta_{H}(n+2q, x+1) - a_{0}\, \frac{\Gamma(2q-1)}{\Gamma(2q)}\, \frac{x^{2q}}{x^{2q-1}}
\end{equation}
To study the behaviour of (\ref{con-hurw}) as $x\to \infty$, we use the following formula (see \cite[pag.269]{A1}) valid for $\Re(n+2q)>0, n+2q \not= 1$
\begin{equation} \label{prol-hurw}
\zeta_{H}(n+2q,x+1) = \frac{1}{(x+1)^{n+2q}} + \frac{(x+1)^{1-n-2q}}{n+2q-1} - (n+2q)\, \int_{0}^{\infty}\ \frac{t- \lfloor t \rfloor}{(t+x+1)^{n+2q+1}}\, dt
\end{equation}
Using (\ref{prol-hurw}), we write (\ref{con-hurw}) as a sum of different terms. First, we isolate the $n=0$ term and the last term in (\ref{con-hurw}), which give
$$
a_{0}\, \frac{x^{2q}}{(x+1)^{2q}} + \frac{a_{0}}{2q-1} \frac{x^{2q}}{(x+1)^{2q}} \, (x+1) -
\frac{a_{0}}{2q-1}\, x - a_{0}\, 2q\, x^{2q}\ \int_{0}^{\infty}\ \frac{t- \lfloor t \rfloor}{(t+x+1)^{2q+1}}\, dt =
$$
$$
= O(1) + \frac{a_{0}}{2q-1} \left( \frac{x^{2q}}{(x+1)^{2q}} -1 \right) \, x + \frac{a_{0}}{2q-1} \frac{x^{2q}}{(x+1)^{2q}} + O(1) = O(1)
$$
as $x \to \infty$, since
$$
\left| x^{2q}\,  \int_{0}^{\infty}\ \frac{t- \lfloor t \rfloor}{(t+x+1)^{2q+1}}\, dt \right| \le x^{2\xi}\, \int_{0}^{\infty}\ \frac{1}{(t+x+1)^{2\xi+1}}\, dt = \frac{1}{2\xi}\,  \frac{x^{2\xi}}{(x+1)^{2\xi}}
$$
Now we come to the terms in (\ref{con-hurw}) with $n\ge 1$. From (\ref{prol-hurw}) we get
$$
\left| x^{2q}\ \sum_{n=1}^\infty\ (-1)^n a_n\, \zeta_{H}(n+2q, x+1) \right| \le
$$
$$
\le \left| \frac{x^{2q}}{(x+1)^{2q}}\right| \ \sum_{n=1}^\infty\ \frac{|a_n|}{(x+1)^{n}} + \left| \frac{x^{2q}}{(x+1)^{2q}}\right|\ \sum_{n=1}^\infty\ \frac{|a_n|}{|n+2q-1|\, (x+1)^{n-1}} +
$$
$$
+ |x^{2q}|\ \sum_{n=1}^\infty\ |a_{n}|\, |n+2q|\, \int_{0}^{\infty}\ \frac{1}{\left| (t+x+1)^{n+2q+1} \right|} \, dt
$$
and the right hand side is $O(1)$ as $x\to \infty$. Hence the thesis follows for $\lambda=1$.
\noindent
For $\lambda \not= 1$, boundedness of $\BB_{q}[\phi]$ follows from  (\ref{trans-finale-2}) and (\ref{trans-finale-3}). Indeed in eq. (\ref{trans-finale-2}), the function $f(x)$ is bounded as $x\to 0$ by assumption, and $f(x) = \BB_{q}[\phi]$. Moreover from eq. (\ref{trans-finale-3}) it follows that the function $\phi$ is the same as in (\ref{trans-finale-2}), hence $\BB_{q}[\phi]$ is again bounded as $x\to 0$.
\end{proof}
\noindent
An example of eigenfunctions of $\PP_q^+$ with $\lambda =1$ is the
family of functions defined as
$$
f_{q}^{+}(x) = \frac{\zeta_{R}(2q)}{2}\, \left(1 + \frac{1}{x^{2q}}
\right) + \sum_{m,n\ge 1} \, \frac{1}{(mx+n)^{2q}} \qquad \qquad
\mbox{for}\ \  \Re(q)>1
$$
where $\zeta_{R}(s)$ denotes the Riemann zeta-function. It is shown in
\cite{LeZa} that the family $f_{q}^{+}$ can be analytically
continued to $q\in \C$. In particular $f_{1}^{+}(x) = \frac 1 x$
is the invariant density of the Farey map and the only
eigenfunction of $\PP_{1}^{+}$ with $\lambda =1$. We can write the
functions $f_q^+$ as in (\ref{forma-autofunz}). First of all,
notice that
$$
\sum_{m,n\ge 1} \, \frac{1}{(mx+n)^{2q}} = \frac{1}{x^{2q}}\,
\sum_{m,n\ge 1} \, \frac{1}{n^{2q} \left( \frac m n + \frac 1 x
\right)^{2q}} = \frac{1}{\Gamma(2q)\, x^{2q}}\, \sum_{m,n\ge 1} \,
\frac{1}{n^{2q}}\, \LL \left[ t_+^{2q-1} e^{-\frac m n\, t}
\right] \left(\frac 1 x \right) =
$$
$$
= \frac{1}{\Gamma(2q)}\, \sum_{m,n\ge 1} \, \frac{1}{n^{2q}}\,
\BB_q \left[ e^{-\frac m n\, t} \right] = \BB_q \left[
\frac{1}{\Gamma(2q)}\, \sum_{m,n\ge 1} \, \frac{e^{-\frac m n\,
t}}{n^{2q}}\, \right] = \BB_q \left[ \frac{1}{\Gamma(2q)}\,
\sum_{n\ge 1} \, \frac{1}{n^{2q}}\, \frac{e^{-\frac t
n}}{1-e^{-\frac t n}} \right] =
$$
$$
= \BB_q \left[ \frac{1}{\Gamma(2q)\, t}\, \sum_{k\ge 0} \,
\frac{B_k}{k!}\, t^k \, \left( \sum_{n\ge 1} \,
\frac{1}{n^{k+2q-1}} \right) \right] = \BB_q \left[
\frac{1}{\Gamma(2q)\, t}\, \sum_{k\ge 0} \, \frac{B_k\,
\zeta_{R}(k+2q-1)}{k!}\, t^k \right]
$$
where $\set{B_k}$ are the Bernoulli numbers, and the series
converges for $|t|< 2\pi$ since $\zeta_{R}(k+2q-1) = O(1)$ as $k \to
\infty$. Hence we get
\begin{equation} \label{zagier-f}
f_{q}^{+}(x) = \frac{\zeta_{R}(2q)}{2}\, \frac{1}{x^{2q}} +
\frac{\Gamma(2q-1)}{\Gamma(2q)}\, \frac{\zeta_{R}(2q-1)}{x} +
\BB_q\left[ \phi \right]
\end{equation}
where $\phi$ is given for $|t|< 2\pi$ by
$$
\phi(t) = \frac{\zeta_{R}(2q)}{2\, \Gamma(2q)} +
\frac{1}{\Gamma(2q)}\, \sum_{k\ge 1} \, \frac{B_k\,
\zeta_{R}(k+2q-1)}{k!}\, t^{k-1}
$$
Moreover in \cite{LeZa} it is also proved that any eigenfunction
of $\PP_q^+$ with $\lambda =1$, when written as in
(\ref{forma-autofunz}), satisfies $c= \alpha\, \frac 1 2\,
\zeta_{R}(2q)$ and $b = \alpha\, \zeta_{R}(2q-1)$ for some $\alpha \in
\C$ (see \cite[Remark 1, pag.246]{LeZa}).
\noindent
An example of eigenfunctions of $\PP_q^-$ with $\lambda =1$ is the family of functions
$$
f_{q}^{-}(x) = 1- \frac{1}{x^{2q}} = - \frac{1}{x^{2q}} + \BB_q
\left[ \frac{1}{\Gamma(2q)} \right]
$$
in particular $b=0$ as stated in Corollary \ref{autofunz}.
Moreover, any other eigenfunction of $\PP_q^-$ with $\lambda =1$
can be written as in (\ref{forma-autofunz}) with $c=b=0$. Indeed,
$b=0$ since it is an eigenfunction of $\PP_q^-$ as stated in
Corollary \ref{autofunz}, and $c$ can be eliminated by subtracting
a multiple of $f_{q}^{-}$.

\section{Induced operators} \label{induced-op}
\noindent
The Farey map $F$ has at least two induced versions. The first
one is the well-known \emph{Gauss map} $G$ which is obtained by
iterating $F$ once plus the number of times necessary to reach the
interval $[1/2,1]$. The map $G:[0,1]\to [0,1]$ is given by
\begin{equation} \label{indg}
G(x) = \left\{
\begin{array}{cl}
F^{[1/x]}(x)=\set{ \frac{1}{x} } & \ x>0 \\[0.3cm]
0 & \ x=0
\end{array} \right.
\end{equation}
It is easily checked that the Gauss map acts on the continued fraction expansion of a number $x\in [0,1]$ as:
$x=[a_1,a_2,a_3,\dots]$ implies $G(x) =[a_2,a_3,\dots]$.
\noindent
We now introduce a family of operators related to the Gauss map. Recall that we assume $\xi=\Re(q)>0$. Consider the spaces
\begin{equation} \label{spazi-acca-meno}
\HH^{p}_q := \set{g \in \HH^{p}_{q,\mu}\ :\ c=b=0} \subset \HH(B), \qquad \text{$\xi>0$ and $p\in [1,+\infty]$}
\end{equation}
A function $g$ in $\HH^{p}_q$ is a $\BB_q$ transform of an $L^p(m_q)$ function. Moreover, let
\begin{equation} \label{spazi-acca-meno-tilde}
\tilde \HH^{p}_q := \set{g \in \HH^{p}_{q}\ :g = \BB_{q}[\phi] \ \text{with $\phi(t) = \phi(0) +O(t^{\eps})$ for some $\eps >0$ as $t\to 0^{+}$}} 
\end{equation}
Recall that from the definition of the $L^{p}(m_{q})$ spaces (see \eqref{spazi-lpq}), it follows that $\HH^{p}_{q} \subset \HH^{1}_{q}$ and $\tilde \HH^{p}_{q} \subset \tilde \HH^{1}_{q}$ for all $p\ge 1$. 

We introduce the family of operators
$$
\QQ_{q,z} : \HH^{1}_q \to \HH(B), \qquad z\in \C\setminus [1,+\infty)\
\mbox{ and }\ \xi>0
$$
$$
\QQ_{q,1} : \tilde \HH^{1}_q \to \HH(B), \qquad \xi> \frac 1 2
$$
defined by
\begin{equation} \label{q}
g(x)= \BB_q[\phi](x)\ \longmapsto\ (\QQ_{q,z} g)(x) = z\,
\PP_{1,q}\, \BB_q\left[ (1-z\, e^{-t})^{-1}\ \phi(t) \right](x)
\end{equation}
The operators are well-defined since $(1-z\,
e^{-t})^{-1}\ \phi(t)$ is in $L^1(m_q)$ for any $\phi$ in
$L^1(m_q)$, being $(1-z\, e^{-t})^{-1}$ in $L^{\infty}$ for $z\in \C\setminus [1,+\infty)$. If instead $z=1$ then $(1- e^{-t})^{-1}\ \phi(t)$ is in $L^1(m_q)$ if $\phi(0)$ is finite since $\frac 1 t$ is in $L^{1}(m_{q})$ for $\xi > \frac 1 2$. 

\begin{remark} \label{isom-l2}
Notice that $\QQ_{q,z}(\HH^{p}_{q})$ is contained in $\HH^{2}_{q}$ for all $p\ge 2$, since by Remark \ref{immagin-p1} the set $\PP_{1,q}(\HH^{2}_{q})$ is in $\HH^{2}_q$. Hence, using Proposition \ref{azione-su-spazi}, the
operators $\QQ_{q,z}$ induce the operators
$$
Q_{q,z} : L^2(m_q) \to L^2(m_q)
$$
given by
\begin{equation} \label{q-l2}
\phi\ \longmapsto\ Q_{q,z} \phi = z\, N_{q}\, (1-z\, M)^{-1}\ \phi
\end{equation}
Notice that for $z=1$, the condition $Q_{q,1}\phi = \phi$ is identical to the integral equation studied by Lewis in \cite{Le}.
\end{remark}

\begin{theorem} \label{thm:lerch-zeta}
The operators $\QQ_{q,z}$ admit the integral representation
\begin{equation} \label{int-rep}
(\QQ_{q,z}\, \BB_q[\phi(t)])(x)\ =\ z\ \int_0^\infty\,
\frac{e^{-t(x+1)}\, t^{2q-1}}{(1-z\, e^{-t})}\, \phi(t)\ dt \ \in \HH(B)
\end{equation}
In particular for $\xi>0$ the function $z
\mapsto \QQ_{q,z}g$ is analytic in $\C \setminus [1,+\infty)$ for any $g\in \HH^{1}_{q}$.
\end{theorem}

\begin{proof}
The integral representation (\ref{int-rep}) is a straightforward
consequence of definitions of $\BB_{q}$ in (\ref{borel-def}) and
$\PP_{1,q}$ in (\ref{p1}). Moreover, since 
$$
|e^{-t(x+1)}| \le  e^{-t} \qquad \forall\, x\in B
$$
and $(1-z\, e^{-t})^{-1}\ \phi(t)$ is in $L^1(m_q)$ as defined in \eqref{spazi-lpq}, the integral in (\ref{int-rep}) is finite. The analyticity of $z\mapsto \QQ_{q,z}g$ on $z \in \C \setminus [1,+\infty)$ for $\xi>0$ follows by the uniform convergence of the integral representation.
\end{proof}

\begin{remark} \label{rem:lerch}
Notice that for $\phi(t) = 1$, which is in $L^1(m_q)$ for $\xi>0$, we have
$$
(\QQ_{q,z}\, \BB_q[1])(x) = z\, \Gamma(2q)\, \Phi(z, 2q, x+1)
$$
where $\Phi(z,s,a)$ is the Lerch zeta function, defined for $|z|<1$ and $\Re(a)>0$ as
$$
\Phi(z,s,a) = \sum_{k=0}^{\infty}\ \frac{z^{k}}{(k+a)^{s}}
$$
(see e.g. \cite[vol.II, pag.27]{E}).
\end{remark}

\begin{theorem} \label{gauss-map}
If $g\in \tilde \HH^{1}_q$ then we can write
 \begin{equation} \label{ps}
(\QQ_{q,z} g)(x) = \sum_{n\ge 1}
\frac{z^n}{(x+n)^{2q}} \ g\left( \frac{1}{x+n} \right)
\end{equation}
for $|z|<1$ if $\xi>\frac 12$, and for $|z|\le 1$ if $\xi> 1$.
\end{theorem}

\begin{proof}
Using definition (\ref{q}), write formally
\begin{eqnarray}
(\QQ_{q,z} g)(x) &=& z \PP_{1,q} \BB_{q} \left[ \sum_{n=0}^{\infty}\ z^n\, e^{-nt}\, \phi(t) \right] \nonumber \\
&=&\sum_{n=0}^{\infty}\ z^{n+1}\, \int_0^\infty\ e^{-t(x+1)}\,
t^{2q-1}\, e^{-nt}\, \phi(t)\ dt  \nonumber \\
&=& \sum_{n=0}^{\infty}\ \frac{z^{n+1}}{(x+n+1)^{2q}}\, \BB_q \left[
\phi \right] \left( \frac{1}{x+n+1} \right)\nonumber \\
&=& \sum_{n\ge 1}
\frac{z^n}{(x+n)^{2q}} \ g\left( \frac{1}{x+n} \right) \nonumber
\end{eqnarray}
The conditions for convergence follow from conditions for uniform convergence of the series in the second equality.
\end{proof}

\begin{remark} \label{rapporto-gauss}
From (\ref{ps}) it follows that the operators $\QQ_{q,1}$ coincide with the generalised transfer operators of the Gauss map $G$ for functions $g\in \tilde \HH^{1}_q$.
\end{remark}
\noindent
We now study the relations between $\QQ_{q,z}$ and $\PP_q^\pm$.

\begin{theorem} \label{q-vs-p}
Let $f \in \HH^{1}_{q,\mu}$ with $z = \frac 1 \mu \in \C \setminus (1,\infty)$. Then for $\xi=\Re(q)> \frac 12$
\begin{equation} \label{ug-funz}
\left(1\mp \QQ_{q,z} \right)\, \left(1-z\, \PP_{0,q} \right)\, f\
=\ (1- z\, \PP_q^\pm )\, f \pm c\, \mu^x
\end{equation}
If $\xi \le \frac 12, \, q\not= \frac 12$, then equality (\ref{ug-funz}) holds for $f \in \HH^{1}_{q,1}$, and if $\mu\not= 1$ for functions $f\in \HH^{1}_{q,\mu}$ with $b=0$.
\end{theorem}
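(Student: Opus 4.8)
The plan is to rewrite (\ref{ug-funz}) as a single operator identity, check that identity by a short computation on the $\BB_q$-transform side exploiting the relation $z\mu=1$, and then verify that everything stays inside the appropriate operator domains — the last point being, I expect, the only real work. For the reduction: since $\PP_q^{\pm}=\PP_{0,q}\pm\PP_{1,q}$ we have $(1-z\PP_q^{\pm})f=(1-z\PP_{0,q})f\mp z\PP_{1,q}f$, while expanding the left side of (\ref{ug-funz}) gives $(1\mp\QQ_{q,z})(1-z\PP_{0,q})f=(1-z\PP_{0,q})f\mp\QQ_{q,z}(1-z\PP_{0,q})f$; comparing, (\ref{ug-funz}) is equivalent to
$$\QQ_{q,z}\,(1-z\PP_{0,q})\,f\;=\;z\,\PP_{1,q}f\;-\;c\,\mu^{x}.$$

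To verify this, write $f(x)=\frac{c\,\mu^{1/x}}{x^{2q}}+\BB_q[\psi]$ with $\psi=\frac bt+\phi$ as in Definition \ref{spazi-acca}. By the action of $\PP_{0,q}$ in Proposition \ref{azione-su-spazi} (valid on $\HH^1_{q,\mu}$ by Remark \ref{da-2-a-1}), $\PP_{0,q}f=\mu\,\frac{c\,\mu^{1/x}}{x^{2q}}+\BB_q[e^{-t}\psi]$, so the coefficient of $\frac{\mu^{1/x}}{x^{2q}}$ in $(1-z\PP_{0,q})f$ is $1-z\mu=0$ by hypothesis; hence $(1-z\PP_{0,q})f=\BB_q[(1-z\,e^{-t})\psi]$ with no surviving $\mu^{1/x}/x^{2q}$-term. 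Now apply $\QQ_{q,z}$ through its definition (\ref{q}): since $\QQ_{q,z}\BB_q[\tilde\phi]=z\,\PP_{1,q}\BB_q[(1-z\,e^{-t})^{-1}\tilde\phi]$, the factors $(1-z\,e^{-t})^{\pm1}$ cancel, so $\QQ_{q,z}(1-z\PP_{0,q})f=z\,\PP_{1,q}\BB_q[\psi]=z\,\PP_{1,q}\bigl(f-\tfrac{c\,\mu^{1/x}}{x^{2q}}\bigr)$. Finally $\PP_{1,q}\bigl(\mu^{1/x}/x^{2q}\bigr)=\mu^{x+1}$ (computed in the proof of Proposition \ref{azione-su-spazi}), so with $z\mu=1$ once more the subtracted term equals $z c\,\mu^{x+1}=c\,\mu^{x}$, giving the displayed identity.

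What genuinely needs care — and dictates the hypotheses — is that each intermediate function lie in the domain of the operator applied to it. The delicate term is $(1-z\,e^{-t})\tfrac bt$: near $t=0$ it behaves like $b(1-z)/t$, so it belongs to $L^1(m_q)$ only when $\Re(q)>\tfrac12$, and in general $\BB_q[(1-z\,e^{-t})\psi]$ need not lie in $\HH^1_q=\mathrm{dom}\,\QQ_{q,z}$. If $\Re(q)>\tfrac12$ there is nothing to check. If $\Re(q)\le\tfrac12$ and $q\neq\tfrac12$, then for $z=1$ (i.e.\ $\mu=1$) the quotient $(1-e^{-t})/t$ is bounded at the origin, the term is harmless, and $(1-z\PP_{0,q})f$ lands in $\tilde\HH^1_q=\mathrm{dom}\,\QQ_{q,1}$ — invoking the meromorphic continuation of $q\mapsto\QQ_{q,1}$ from Theorem \ref{thm:lerch-zeta}, whose pole at $q=\tfrac12$ forces the restriction $q\neq\tfrac12$; while for $\mu\neq1$ the assumption $b=0$ simply deletes the offending term, leaving $\psi=\phi\in L^1(m_q)$. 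I expect the careful handling of these integrability thresholds, rather than the algebra, to be the main point of the argument.
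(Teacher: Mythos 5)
Your proposal is correct and follows essentially the same route as the paper's proof: both exploit $z\mu=1$ to kill the $\mu^{1/x}/x^{2q}$-term in $(1-z\,\PP_{0,q})f$, cancel the factors $(1-z\,e^{-t})^{\pm1}$ through definition (\ref{q}) to reduce to $z\,\PP_{1,q}\BB_q[\psi]$ (whence the $\pm c\,\mu^{x}$ correction via $\PP_{1,q}(\mu^{1/x}/x^{2q})=\mu^{x+1}$), and isolate the term $b(1-z)/t$ as the sole obstruction to integrability, yielding the identical case analysis on $\Re(q)$, $\mu=1$, and $b=0$.
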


\begin{proof}
First of all, we show that the left-hand side of (\ref{ug-funz})
is well defined. This follows using Proposition
\ref{azione-su-spazi} and Remark \ref{da-2-a-1}, and writing
\begin{eqnarray}
\left(1-z\, \PP_{0,q} \right)\, f &=& \left(1-z\, \PP_{0,q}
\right)\, \left( \frac{c\, \mu^{\frac 1 x}}{x^{2q}} +
\BB_q\left[\frac b t + \phi(t)\right] \right) \nonumber \\
&=& \frac{c\, \mu^{\frac 1 x}}{x^{2q}} + \BB_q\left[\frac b t +
\phi(t)\right] - z \ \left( \frac{c\, \mu\, \mu^{\frac 1
x}}{x^{2q}} + \BB_q\left[ M \left(\frac b t + \phi(t)\right)
\right] \right) \nonumber \\
&=& \BB_q\left[ \frac b t \left( 1 - z \right) \right] + \BB_q
\left[ \phi(t)\, \left( 1 - z\, e^{-t} \right) + z\,  \frac{b\,
(1-e^{-t})}{t} \right] \nonumber
\end{eqnarray}
The last term is in $\HH^{1}_q$ since the function in square brackets
is in $L^1(m_q)$ for any $\phi \in L^{1}(m_{q})$ and $\xi>0$. The first term is in $L^{1}(m_{q})$ for $\xi > \frac 1 2$, and vanishes for $\xi \le \frac 12$ since if $\mu \not=1 $ we have $b=0$.
\noindent
Hence we can now apply $(1- \QQ_{q,z})$. Assume first $\mu\not= 1$ and $\xi \le \frac 12$, then it follows that
\begin{eqnarray}
\left(1\mp \QQ_{q,z} \right)\, \left(1-z\, \PP_{0,q} \right)\, f\
&=& \left(1\mp \QQ_{q,z} \right)\, \BB_q \left[ \phi(t)\, \left( 1 -
z\, e^{-t} \right) \right] \nonumber \\
&=& \BB_q \left[ \phi(t)\, \left( 1 - z\, e^{-t} \right) \right] - z\, \PP_{1,q}\,  \BB_q \left[  \left( 1 - z\, e^{-t} \right)^{-1} \left( \phi(t)\, (1-z\, e^{-t}) \right) \right] \nonumber \\
&=& (1-z\, \PP_{0,q} - z\, \PP_{1,q})\, \BB_{q}[\phi] = (1- z\, \PP_q^+ )\, f \nonumber
\end{eqnarray}
where we have used again Proposition \ref{azione-su-spazi} and Remark \ref{da-2-a-1}.
\noindent
The case $\mu\not= 1$ and $\xi > \frac 12$ follows in the same way, by writing $f = \BB_{q}[\tilde \phi]$ with $\tilde \phi = \frac b t + \phi \in L^{1}(m_{q})$. In the case $\mu =1$ instead we get
\begin{eqnarray}
\left(1\mp \QQ_{q,1} \right)\, \left(1- \PP_{0,q} \right)\, f\
&=& \left(1\mp \QQ_{q,1} \right)\, \BB_q \left[ \phi(t)\, \left( 1 - e^{-t} \right)  +  \frac{b\, (1-e^{-t})}{t} \right] \nonumber \\
&=& \BB_q \left[ \phi(t)\, \left( 1 - e^{-t} \right) +  \frac{b\, (1-e^{-t})}{t} \right] \nonumber \\
&& \qquad \qquad-  \PP_{1,q}\,  \BB_q \left[  \left( 1 - e^{-t} \right)^{-1} \left( \phi(t)\, (1-z\, e^{-t}) +  \frac{b\, (1-e^{-t})}{t}  \right) \right]\nonumber \\
&=& (1-\PP_{0,q} - \PP_{1,q})\, \BB_{q}\left[ \frac b t+\phi \right] = (1- \PP_q^+ )\, f \nonumber
\end{eqnarray}
where we have used again Proposition \ref{azione-su-spazi} and Remark \ref{da-2-a-1}.
\end{proof}

\begin{corollary} \label{relaz-autof}
Let $z \in \C \setminus (1,\infty)$. The operator $\QQ_{q,z}$ has
an eigenfunction $g\in \HH^{1}_{q}$ with eigenvalue $\lambda_Q = \pm 1$ if and
only if $\PP_q^\pm$ has an eigenfunction $f \in \HH^{1}_{q,\frac 1 z}$ with eigenvalue
$\lambda_P = \frac 1 z$ and term $c=0$. Moreover the eigenfunctions $g$ and $f$ satisfy
\begin{equation} \label{g-vs-f-explicit}
g = f - z\, \PP_{0,q} f 
\end{equation}
\end{corollary}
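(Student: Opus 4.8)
The plan is to derive the biconditional directly from the functional identity in Theorem~\ref{q-vs-p}, specialized to the case $c=0$. Throughout, set $\lambda_P = \frac 1 z$ and recall that $z \in \C \setminus (1,\infty)$ corresponds to $\lambda_P = \frac 1 z \in \C \setminus (0,1]$, so in particular $\lambda_P \notin [0,1)$ whenever $z \neq 1$; the case $z=1$, $\lambda_P = 1$ must be tracked separately because of the restriction to $\tilde\HH^1_q$ and $\Re(q) > \frac 1 2$ in the definition of $\QQ_{q,1}$.

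First I would prove the forward direction. Suppose $f \in \HH^1_{q,\frac 1 z}$ is an eigenfunction of $\PP_q^\pm$ with eigenvalue $\frac 1 z$; by Corollary~\ref{autofunz} its term $b$ vanishes unless $z=1$, and we are assuming $c=0$, so $f \in \HH^1_q$ (or $\tilde\HH^1_q$ when $z=1$, using the regularity of $\phi$ at the origin from Corollary~\ref{autofunz}). Since $(1 - z\,\PP_q^\pm) f = (1 - z\,\lambda_P) f = 0$ and $c=0$, the right-hand side of \eqref{ug-funz} vanishes, so
\begin{equation*}
\left(1 \mp \QQ_{q,z}\right)\left(1 - z\,\PP_{0,q}\right) f = 0.
\end{equation*}
Setting $g := (1 - z\,\PP_{0,q}) f = f - z\,\PP_{0,q} f$, which lies in $\HH^1_q$ by the computation in the proof of Theorem~\ref{q-vs-p}, this reads $\QQ_{q,z} g = \pm g$, i.e.\ $g$ is an eigenfunction with eigenvalue $\lambda_Q = \pm 1$. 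The one remaining point is that $g \not\equiv 0$: if $g = 0$ then $f = z\,\PP_{0,q} f$, and iterating gives $f = z^n \PP_{0,q}^n f = z^n (x+n)^{-2q} f(x/(x+n)) \to 0$ as $n \to \infty$ for $\Re(x) > 0$ when $|z| \le 1$ (and a similar argument, or analyticity, handles $|z| > 1$, $z \notin (1,\infty)$), forcing $f \equiv 0$, a contradiction. This also establishes \eqref{g-vs-f-explicit}.

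For the converse, suppose $g \in \HH^1_q$ satisfies $\QQ_{q,z} g = \pm g$. The idea is to solve $(1 - z\,\PP_{0,q}) f = g$ for $f$ and show the solution is the desired eigenfunction. Since $z\,\PP_{0,q}$ is a well-defined operator on an appropriate space with $(z\,\PP_{0,q})^n \to 0$ in the sense above, the Neumann series $f := \sum_{n \ge 0} z^n \PP_{0,q}^n g = \sum_{n \ge 0} z^n (x+n)^{-2q} g(x/(x+n))$ converges (for $|z| \le 1$; for $|z|>1$, $z \notin (1,\infty)$ one extends by analyticity in $z$ as in Theorem~\ref{gauss-map}) and satisfies $(1 - z\,\PP_{0,q}) f = g$. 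One checks $f \in \HH^1_{q,\frac 1 z}$ with $c = 0$: comparing with Proposition~\ref{azione-su-spazi}, applying $\PP_{0,q}$ repeatedly to $\BB_q[\psi]$ produces $\BB_q[M^n(\psi)] = \BB_q[e^{-nt}\psi]$ and never creates a $\mu^{1/x}/x^{2q}$ term, so indeed $c=0$. Then \eqref{ug-funz} with $c=0$ gives $(1 \mp \QQ_{q,z}) g = (1 - z\,\PP_q^\pm) f$, and the left side is $0$ since $\QQ_{q,z} g = \pm g$; hence $(1 - z\,\PP_q^\pm) f = 0$, i.e.\ $\PP_q^\pm f = \frac 1 z f$. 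Finally $f \not\equiv 0$ because $(1 - z\,\PP_{0,q}) f = g \neq 0$.

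The main obstacle I anticipate is the careful bookkeeping of function spaces and the case $z = 1$. One must verify at each step that the functions stay in the spaces for which Theorem~\ref{q-vs-p} and Proposition~\ref{azione-su-spazi} are applicable --- in particular, when $z=1$ one needs $\Re(q) > \frac 1 2$, the eigenfunction $f$ must have the $O(t^\eps)$ regularity at the origin so that $g \in \tilde\HH^1_q$ (this is exactly what Corollary~\ref{autofunz} supplies, since there $\phi(t) = \phi(0) + O(t)$), and conversely the Neumann series for $f$ must be shown to have vanishing $b$-term only when $z \neq 1$, while for $z=1$ the term $\frac{\Gamma(2q-1)}{\Gamma(2q)}\frac b x$ is permitted and reappears. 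The convergence/analytic-continuation argument in $z$ for $|z| > 1$ is routine given Theorem~\ref{gauss-map} but should be stated. Everything else is a direct substitution into \eqref{ug-funz}.
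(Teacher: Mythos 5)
Your forward direction is essentially the paper's: specialize \eqref{ug-funz} to an eigenfunction with $c=0$, use Corollary \ref{autofunz} to kill $b$ when $z\neq 1$ (and to supply the $O(t^{\eps})$ regularity needed for $g\in\tilde\HH^1_q$ when $z=1$), and read off that $g=(1-z\,\PP_{0,q})f$ is an eigenfunction of $\QQ_{q,z}$. That part is fine, and your explicit check that $g\not\equiv 0$ is a detail the paper leaves implicit.

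The converse, however, has a genuine gap. You construct $f$ by the Neumann series $\sum_{n\ge 0}z^n\,\PP_{0,q}^n\,g$; since $(\PP_{0,q}^n g)(x)=(nx+1)^{-2q}\,g(x/(nx+1))$ (note: not $(x+n)^{-2q}\,g(x/(x+n))$ as you wrote), the $n$-th term is of size $|z|^n n^{-2\Re(q)}$ and the series diverges for every $|z|>1$. The corollary is claimed for all $z\in\C\setminus(1,\infty)$, so "extend by analyticity as in Theorem \ref{gauss-map}" is doing all the work precisely where the statement is nontrivial, and you never exhibit the analytic continuation or verify that the continued object still solves $(1-z\,\PP_{0,q})f=g$ and lies in $\HH^1_{q,1/z}$ with $c=0$. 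The paper closes this by inverting $(1-z\,\PP_{0,q})$ on the $\BB_q$-representative rather than on the holomorphic side: writing $g=\BB_q[\phi]$, it sets $f:=\BB_q[(1-z\,e^{-t})^{-1}\phi]$, which is well defined for \emph{all} $z\in\C\setminus(1,\infty)$ because $(1-z\,e^{-t})^{-1}\in L^{\infty}(0,\infty)$ there; then $(1-z\,\PP_{0,q})f=\BB_q[\phi]=g$ by Proposition \ref{azione-su-spazi}, and by the very definition \eqref{q} one has $z\,\PP_{1,q}f=\QQ_{q,z}g=\pm g$, whence $\PP_q^{\pm}f=\frac1z f$ immediately, with $c=0$ built in. This device (and its $z=1$ variant, splitting $\phi(t)/(1-e^{-t})=\phi(0)/t+\bar\phi(t)$) is the missing ingredient; it also gives a cleaner nonvanishing argument, since $1-z\,e^{-t}$ is nonzero for a.e.\ $t>0$ when $z\notin(1,\infty)$.
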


\begin{proof}
If $f$ is in $\HH^{1}_{q,\lambda}$ satisfies $\PP_q^+\, f = \lambda\, f$,
with $\lambda \not= 1$ then we can apply Theorem \ref{q-vs-p}, since $b=0$
by Corollary \ref{autofunz}. Then by (\ref{ug-funz})
$(1- \frac 1 \lambda\, \PP_{0,q})\, f$ is an eigenfunction of $\QQ_{q,
\frac 1 \lambda}$ with eigenvalue $\lambda_Q=1$ if $c=0$. The same
follows in the case $\lambda =1$.

On the contrary, if $g = \BB_q [\phi] \in \HH^{1}_q$ satisfies
$\QQ_{q,z}\, g = g$ for $z\not= 1$, then the function
$$
f:= \BB_q [(1-z\, e^{-t})^{-1} \phi] \in \HH^{1}_{q}
$$
satisfies by (\ref{q})
$$
z\, \PP_{1,q}\, f = g = (1- z\, \PP_{0,q})\, f
$$
hence it is an eigenfunction of $\PP_q^+$ with eigenvalue
$\lambda_P = \frac 1 z$ and $c=0$. If $z=1$ we can repeat the same argument by using the fact that $\QQ_{q,1}$ is defined on $\tilde \HH^{1}_{q}$, hence $g = \BB_q [\phi]$ with $\phi(t) = \phi(0) + O(t^{\eps})$ for some $\eps>0$ as $t\to 0^{+}$. Hence we can write $\frac{\phi(t)}{(1-e^{-t})} = \frac{b}{t} + \bar \phi(t)$, with $b=\phi(0)$ and $\bar \phi \in L^{1}(m_{q})$ for all $\xi>0$, and $f\in \HH^{1}_{q,1}$ with $c=0$.
\end{proof}

\begin{corollary} \label{autofunz-gauss}
Let $z \in \C \setminus (1,\infty)$. The eigenfunctions $g=\BB_{q}[\phi]$ of
$\QQ_{q,z}$ with eigenvalue $\lambda_Q = \pm 1$ are in $\tilde \HH^2_q$ and are bounded at $x=0$ with $g(0) = \Gamma(2q)\, \phi(0)$ if $z=1$, whereas $g(0) = (1-z)\, \Gamma(2q)\, \phi(0)$ if $z\not= 1$.
\end{corollary}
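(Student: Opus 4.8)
The plan is to combine Corollary \ref{relaz-autof} with the regularity results already established for eigenfunctions of $\PP_q^\pm$ in Corollary \ref{autofunz}, tracing through the explicit relation $g = f - z\, \PP_{0,q} f$. First I would invoke Corollary \ref{relaz-autof}: if $g = \BB_q[\phi]$ is an eigenfunction of $\QQ_{q,z}$ with eigenvalue $\lambda_Q = \pm 1$, then $f := \BB_q[(1-z\, e^{-t})^{-1}\phi]$ is an eigenfunction of $\PP_q^\pm$ with eigenvalue $\lambda_P = \tfrac 1z$ and term $c=0$. By Corollary \ref{autofunz}, such an $f$ then has the form $f(x) = \tfrac{\Gamma(2q-1)}{\Gamma(2q)}\tfrac bx + \BB_q[\tilde\phi]$ with $\tilde\phi \in L^2(m_q)$, $\tilde\phi(t) = \tilde\phi(0) + O(t)$ as $t\to 0^+$, and $\BB_q[\tilde\phi]$ bounded at $x=0$; moreover $b=0$ unless $\lambda_P = 1$, i.e.\ unless $z=1$. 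Since $\phi = (1-z\, e^{-t})\cdot(1-z\,e^{-t})^{-1}\phi$, one reads off that the original density of $g$ inherits this regularity: writing $\psi := (1-z\,e^{-t})^{-1}\phi$ for the density of $f$ (modulo the $\tfrac 1t$ piece when $z=1$), we have $\phi(t) = (1-z\,e^{-t})\,\psi(t)$, which is $\phi(0) + O(t^\eps)$ near $0$ because $(1-z\,e^{-t})$ is analytic and, for $z=1$, vanishes to first order exactly cancelling the $\tfrac 1t$ singularity of $\psi$. This is the verification that $g \in \tilde\HH^2_q$.

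Next I would establish boundedness of $g$ at $x=0$ and compute $g(0)$. The cleanest route is to use the integral representation from Theorem \ref{thm:lerch-zeta}: $g(x) = (\QQ_{q,z}g)(x)/\lambda_Q$ (since $g$ is an eigenfunction), so up to the sign $\lambda_Q = \pm 1$,
$$
g(x) = \pm\, z \int_0^\infty \frac{e^{-t(x+1)}\, t^{2q-1}}{1-z\,e^{-t}}\, \phi(t)\, dt .
$$
As $x \to 0$ the integrand converges, dominated by $e^{-t}\,t^{2q-1}|1-z\,e^{-t}|^{-1}|\phi(t)|$, which is integrable on $(0,\infty)$ for the relevant range of $q$ and $z$ (for $z \neq 1$ because $(1-z\,e^{-t})^{-1}$ is bounded, for $z=1$ because $\phi(0)$ finite makes $\tfrac{\phi(t)}{1-e^{-t}}$ have only an integrable $\tfrac 1t$ singularity and $\Re(q) > \tfrac 12$). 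So by dominated convergence $g(0) = \pm z\int_0^\infty e^{-t}\,t^{2q-1}(1-z\,e^{-t})^{-1}\phi(t)\, dt = z\,\Gamma(2q)\,\BB_q[(1-z\,e^{-t})^{-1}\phi](0) $ once one recalls $\lambda_Q\,g = \QQ_{q,z}g$; but it is simpler to go through $f$: since $g = f - z\,\PP_{0,q}f$ by (\ref{g-vs-f-explicit}) and $\PP_{0,q}$ sends $\BB_q[\psi]$ to $\BB_q[M\psi]$ with $M\psi(t) = e^{-t}\psi(t)$, we get $g = \BB_q[(1-z\,e^{-t})\psi]$ where $f = \BB_q[\psi]$ (for $z=1$, modulo the harmless $\tfrac 1t$ term whose $\PP_{0,q}$-image is handled in Proposition \ref{azione-su-spazi}). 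Evaluating a $\BB_q$ transform at $x=0$ amounts to $\lim_{x\to 0}\BB_q[\eta](x) = \lim_{x\to 0}\tfrac{1}{x^{2q}}\int_0^\infty e^{-t/x}e^t\eta(t)\,m_q(dt)$; for $\eta$ with $\eta(0)$ finite and mild decay this Laplace-type limit gives $\Gamma(2q)\,\eta(0)$ after the substitution $t = xs$. Applying this with $\eta = (1-z\,e^{-t})\psi$ and $\eta(0) = (1-z)\psi(0)$ for $z\neq 1$, and with $\eta = \phi$ directly for $z=1$ where $\psi$ carries the $\tfrac 1t$ and $\eta(0)=\phi(0)$, yields $g(0) = (1-z)\,\Gamma(2q)\,\phi(0)$ for $z\neq 1$ and $g(0) = \Gamma(2q)\,\phi(0)$ for $z=1$.

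The main obstacle I anticipate is making the $x\to 0$ limit of the $\BB_q$ transform fully rigorous and getting the constant exactly right — in particular justifying the interchange of limit and integral, and handling the $z=1$ case where the relevant density genuinely has a $\tfrac 1t$ singularity so that one must track cancellations carefully (the factor $1-z\,e^{-t}$ degenerating at $z=1$ is precisely what swaps the answer between $(1-z)\Gamma(2q)\phi(0)$ and $\Gamma(2q)\phi(0)$). Here the boundedness of $\BB_q[\tilde\phi]$ at $x=0$ already proved in Corollary \ref{autofunz} does most of the work: it tells us the $\BB_q$-image of an $L^2(m_q)$ density with finite value at $0$ is bounded there, and a short refinement of that argument (e.g.\ via the Hurwitz-zeta computation in the proof of Corollary \ref{autofunz}, or the substitution $t=xs$ in the integral) identifies the limiting value as $\Gamma(2q)$ times the density at $0$. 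I would present the $z\neq 1$ case first as the clean prototype and then indicate the modification for $z=1$, noting the pole structure from Theorem \ref{thm:lerch-zeta} is consistent with the residue $\phi(0)$ matching $g(0)/\Gamma(2q)$ in the limit $z\to 1$.
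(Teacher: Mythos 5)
Your argument for the membership $g\in\tilde\HH^2_q$ is essentially the paper's own: pass to $f=\BB_q[(1-z\,e^{-t})^{-1}\phi]$ via Corollary \ref{relaz-autof}, import the regularity of its density from Corollary \ref{autofunz}, and multiply back by $(1-z\,e^{-t})$ (with the $\frac 1t$ cancellation when $z=1$). Where you diverge is the evaluation at $x=0$. The paper takes no limit of the $\BB_q$ transform at all: from (\ref{lewis}) and (\ref{g-vs-f-explicit}) it first deduces the identity $g(x)=z\,f(x+1)$, and then, using the expansion (\ref{expansion}) of $f$ about $x=1$ together with the explicit formula for $\BB_q[e^{-t}t^n]$, writes $g$ in the closed form (\ref{serie-g}), which is visibly holomorphic at $x=0$ and yields $g(0)$ at once. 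Your substitution $t=xs$ does produce $\Gamma(2q)\,\eta(0)$, but the domination needed is not automatic for a general $L^2(m_q)$ density with finite value at $0$; it holds here only because $\eta$ is the explicit function coming from Theorem \ref{eigenfunc-acca} (essentially $z\,e^{-t}$ times an entire function of controlled growth), and you would have to say so. The identity $g(x)=z\,f(x+1)$ is the cheaper route to both boundedness and the value.

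The one substantive problem is the constant for $z\neq 1$. Carried through consistently, your computation gives $g(0)=\Gamma(2q)\,\eta(0)$ with $\eta=\phi=(1-z\,e^{-t})\psi$ the density of $g$; that is, $g(0)=\Gamma(2q)\,\phi(0)$, equivalently $(1-z)\,\Gamma(2q)\,\psi(0)$ where $\psi$ is the density of $f$. The step ``yields $g(0)=(1-z)\,\Gamma(2q)\,\phi(0)$'' silently replaces $\psi(0)$ by $\phi(0)$, which is not legitimate since $\phi(0)=(1-z)\,\psi(0)$: as written the factor $(1-z)$ is counted twice. (The paper's own proof concludes $g(0)=\Gamma(2q)\,\phi(0)$ and asserts that ``the same argument'' covers $z\neq 1$, so the extra $(1-z)$ in the statement is only consistent if $\phi(0)$ there is read as the value of the density of $f$; in any case your derivation of that factor does not follow from the line preceding it, and you need to fix which density $\phi(0)$ refers to before asserting the formula.)
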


\begin{proof}
Let $z\not= 1$. By Corollary  \ref{relaz-autof}, from any $g= \BB_q[\phi] \in \HH^1_q$ an eigenfunction of $\QQ_{q,z}$ with eigenvalue $\lambda_Q=\pm 1$, we obtain an eigenfunction $f\in \HH^1_q$ of $\PP^\pm_q$ with eigenvalue $\lambda_P = \frac 1 z$ and such that $c=0$ when written as in (\ref{forma-autofunz}), and (\ref{g-vs-f-explicit}) holds. Moreover, by Corollary \ref{autofunz}, we know that $f\in \HH^2_{q,\frac 1 z}$ and $b=0$, hence actually $f\in \HH^2_q$. Since $f = \BB_q [(1-z\, e^{-t})^{-1} \phi]$ and $(1-z\, e^{-t})^{-1} \in L^\infty$, then $\phi \in L^2(m_q)$ and $g\in \HH^2_q$. If $z=1$, we can repeat the same argument with few changes. In this case $f = \BB_q [(1- e^{-t})^{-1} \phi]$ and
$$
\frac{\phi(t)}{(1-e^{-t})} = \frac{\phi(0)}{t} + \bar \phi(t) \ \ \text{with} \ \ \bar \phi (t) = \frac{\phi(t)-\phi(0)}{1-e^{-t}} + \frac{\phi(0)}{1-e^{-t}} - \frac{\phi(0)}{t}
$$
From Corollary \ref{autofunz} it follows that $\bar \phi(t) \in L^2(m_q)$, hence $\phi \in L^2(m_q)$ and $g\in \HH^2_q$. Moreover, from (\ref{lewis}) and (\ref{g-vs-f-explicit}) it follows that $g(x) = z f(x+1)$, hence $g(0) = z f(1)$.
Finally Theorem \ref{eigenfunc-acca} shows that the eigenfunction $f$ is written as in (\ref{trans-finale}), (\ref{trans-finale-2}), (\ref{trans-finale-3}). Let $z=1$, using (\ref{g-vs-f-explicit}) we write $g= \BB_{q}[\phi]$ with
$$
\phi(t) = e^{-t}\, \sum_{n=0}^\infty\ \frac{(-1)^n a_n t^n}{\Gamma(n+2q)}
$$
with $\limsup_{n} \sqrt[n]{|a_{n}|} \le 1$. Since for $n\ge 0$
$$
\BB_q \left[e^{-t} t^{n} \right] = \frac{1}{x^{2q}}\
\int_{0}^{\infty} \ e^{- t \left(\frac 1 x +1\right)} t^{n+2q-1}  \, dt
= \frac{1}{x^{2q}}\ \LL[t^{n+2q-1}]\left(\frac 1 x +1\right) = \Gamma(n+2q)\,
\frac{1}{x^{2q}}\, \left( \frac{x}{x+1} \right)^{n+2q}
$$
it follows that 
\begin{equation} \label{serie-g}
g(x) = \frac{1}{(x+1)^{2q}}\, \sum_{n=0}^\infty\ (-1)^n a_n \left( \frac{x}{x+1} \right)^{n}
\end{equation}
which implies in particular $g(0) = \Gamma(2q) \phi(0)$. The same argument can be used for $z\not= 1$.
\end{proof}

\vskip 0.3cm
\noindent
We finish this section by considering a second induced map from (\ref{farey}). It is the \emph{Fibonacci map} $H$ which is defined by iterating $F$ once plus the number of times necessary to reach the interval $[0,1/2]$. The map $H$ is defined by
\begin{equation} \label{fibonacci-map}
H(x) = \left\{
\begin{array}{ll}
\frac{S_{_{2n+1}}\, x - S_{_{2n}}}{S_{_{2n+1}} - S_{_{2n+2}}\, x}
& \mbox{if }\ x \in \left[ \frac{S_{_{2n}}}{S_{_{2n+1}}}\, ,\,
\frac{S_{_{2n+2}}}{S_{_{2n+3}}}
\right)\\[0.5cm]
\frac{S_{_{2n+1}} - S_{_{2n+2}}\, x}{S_{_{2n+5}}\, x -
S_{_{2n+4}}} & \mbox{if }\ x \in \left(
\frac{S_{_{2n+3}}}{S_{_{2n+4}}}\, ,\,
\frac{S_{_{2n+1}}}{S_{_{2n+2}}} \right]
\end{array} \right.
\end{equation}
where $\set{S_{n}}_{n\ge 0}$ are the Fibonacci numbers with $S_{_{0}}=0$, $S_{_{1}}=1$. The corresponding operators are obtained by exchanging the roles of $\PP_{0,q}$ and $\PP_{1,q}$ in (\ref{q}). We recall from \cite{BGI} that the operators $N_{q}$ on $L^{2}(m_{q})$ are of trace class with spectrum
\begin{equation} \label{form-spettro-N}
\sigma(N_{q}) = \set{0} \cup \set{ (-1)^{k}\, \alpha^{2(q+k)}}_{k\ge 0}
\end{equation}
where $\alpha = \frac{\sqrt{5}-1}{2}$ and each eigenvalue is simple. Hence we introduce on $\HH^1_{q}$ the second family of operators
$$
\RR_{q,z} : \HH^1_{q} \to \HH(B),  \qquad z\in \C \setminus \set{ (-1)^{k}\, \alpha^{-2(q+k)}}_{k\ge 0} \mbox{ and }\ \xi>0
$$
defined by
\begin{equation} \label{r}
g(x)= \BB_q[\phi(t)]\ \longmapsto\ (\RR_{q,z} g)(x) = z\,
\PP_{0,q}\, \BB_q\left[ (1-z\, N_{q})^{-1}\ \phi(t) \right]
\end{equation}
By (\ref{form-spettro-N}) the operators are well defined and we get

\begin{theorem} \label{r-anal-series}
For $\xi>0$, the operator-valued function $z\mapsto \RR_{q,z}$ is meromorphic in $\C$ with simple poles at $\set{ (-1)^{k}\, \alpha^{-2(q+k)}}_{k\ge 0}$. Moreover for all $g\in \HH^1_{q}$ one has
\begin{equation} \label{ps-r}
(\RR_{q,z} g)(x)= \sum_{n\ge 1}
\frac{z^n}{(S_{_{n+1}}\, x+S_{_n})^{2q}} \ g \left( \frac{S_{_n}
\, x + S_{_{n-1}}}{S_{_{n+1}}\, x+S_{_n}} \right)
\end{equation}
which, by the growth property of the Fibonacci numbers, is
absolutely convergent for $|z|< \alpha^{-2\xi}$.
\end{theorem}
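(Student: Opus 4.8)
The plan is to run the same two arguments used for Theorems \ref{thm:lerch-zeta} and \ref{gauss-map}, but with the geometry of the Gauss map replaced by that of the Fibonacci map $H$ and with multiplication by $(1-z\,e^{-t})^{-1}$ on $L^{p}(m_{q})$ replaced by the resolvent $(1-z\,N_{q})^{-1}$. Concretely, I would first establish the meromorphic continuation of $z\mapsto\RR_{q,z}$ directly from the spectral properties (\ref{form-spettro-N}) of $N_{q}$, and then obtain the power series (\ref{ps-r}) on the disc $\{|z|<\alpha^{-2\Re(q)}\}$ by expanding that resolvent in a Neumann series and turning the iterates of the transfer operator into Fibonacci data.

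For the meromorphy: by (\ref{form-spettro-N}) the operator $N_{q}$ is trace class on $L^{2}(m_{q})$ with simple nonzero eigenvalues $(-1)^{k}\alpha^{2(q+k)}$, so the analytic Fredholm theorem makes $z\mapsto(1-z\,N_{q})^{-1}$ a meromorphic operator-valued function on all of $\C$ whose only singularities are poles at the reciprocals $z=(-1)^{k}\alpha^{-2(q+k)}$, and, since each such eigenvalue is simple, these poles are simple. Writing $(1-z\,N_{q})^{-1}\phi=\phi+z(1-z\,N_{q})^{-1}(N_{q}\phi)$ transports this to $L^{1}(m_{q})$ — on which $N_{q}$ is the smoothing operator used to define $\RR_{q,z}$, with values in $L^{2}(m_{q})\subset L^{1}(m_{q})$ — hence to $\HH^{1}_{q}=\BB_{q}[L^{1}(m_{q})]$; composing on the left with the bounded operator $\PP_{0,q}$ and with the entire factor $z$ then gives that $z\mapsto\RR_{q,z}$ is meromorphic on $\C$ with simple poles exactly at $\{(-1)^{k}\alpha^{-2(q+k)}\}_{k\ge0}$, which is the first assertion.

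For the power series: since $\alpha<1$ the eigenvalue of largest modulus is $\alpha^{2q}$, so the spectral radius of $N_{q}$ equals $\alpha^{2\Re(q)}$ and the Neumann series $(1-z\,N_{q})^{-1}=\sum_{j\ge0}z^{j}N_{q}^{\,j}$ converges in operator norm for $|z|<\alpha^{-2\Re(q)}$. For such $z$ and $g=\BB_{q}[\phi]\in\HH^{1}_{q}$, using the intertwining $\PP_{1,q}\BB_{q}=\BB_{q}N_{q}$ of Remark \ref{immagin-p1} (and boundedness of $\PP_{0,q}$ and $\BB_{q}$ to justify the interchanges) the definition (\ref{r}) becomes
\begin{equation*}
(\RR_{q,z}g)(x)=\sum_{j\ge0}z^{j+1}\,\bigl(\PP_{0,q}\PP_{1,q}^{\,j}g\bigr)(x).
\end{equation*}
I would then read $\PP_{0,q}$ and $\PP_{1,q}$ as weight-$2q$ slash actions of the matrices $A_{0}$ (rows $(1,0)$ and $(1,1)$) and $A_{1}$ (rows $(0,1)$ and $(1,1)$); the cocycle relation for the slash action gives $\PP_{0,q}\PP_{1,q}^{\,j}g=g\,\big|_{2q}\,(A_{1}^{\,j}A_{0})$, and a one-line induction using $S_{n+1}=S_{n}+S_{n-1}$ identifies $A_{1}^{\,j}$ with the matrix of rows $(S_{j-1},S_{j})$, $(S_{j},S_{j+1})$, hence $A_{1}^{\,j}A_{0}$ with the matrix of rows $(S_{j+1},S_{j})$, $(S_{j+2},S_{j+1})$, for all $j\ge0$. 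Therefore
\begin{equation*}
\bigl(\PP_{0,q}\PP_{1,q}^{\,j}g\bigr)(x)=\frac{1}{(S_{j+2}x+S_{j+1})^{2q}}\;g\left(\frac{S_{j+1}x+S_{j}}{S_{j+2}x+S_{j+1}}\right),
\end{equation*}
and substituting $n=j+1$ turns the displayed sum into (\ref{ps-r}).

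Finally, for convergence: for $x$ in a compact subset of $B$ the Fibonacci growth $S_{n}\asymp\alpha^{-n}$ gives $|S_{n+1}x+S_{n}|\gtrsim\alpha^{-n}$ (the real zero $-S_{n}/S_{n+1}\to-\alpha$ of the denominator stays outside $\overline{B}$, and the argument of $S_{n+1}x+S_{n}$ stays bounded), while the arguments $\frac{S_{n}x+S_{n-1}}{S_{n+1}x+S_{n}}$ converge uniformly to the constant $\alpha\in(0,1)\subset B$; hence each term of (\ref{ps-r}) is $O\big((|z|\,\alpha^{2\Re(q)})^{n}\big)$ locally uniformly, so (\ref{ps-r}) converges absolutely and locally uniformly on $\{|z|<\alpha^{-2\Re(q)}\}$, where it coincides with the meromorphic family constructed above. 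I expect the only genuine care needed to be in the bookkeeping of the $L^{1}$-versus-$L^{2}$ domain for the resolvent and the intertwining, and in lining up the index shift $n=j+1$ with the Fibonacci recursion; beyond that the proof is a transcription of the arguments for Theorems \ref{thm:lerch-zeta} and \ref{gauss-map}.
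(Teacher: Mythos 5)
Your proposal is correct and follows essentially the same route as the paper: the meromorphy with simple poles is read off from the spectral data (\ref{form-spettro-N}) of the trace-class operator $N_{q}$, and the series (\ref{ps-r}) is obtained by writing $\RR_{q,z}g=z\,\PP_{0,q}(1-z\,\PP_{1,q})^{-1}g=\sum_{n\ge 1}z^{n}\,\PP_{0,q}\PP_{1,q}^{n-1}g$ and identifying $\PP_{1,q}^{n-1}$ with Fibonacci data by induction. Your matrix computation of $A_{1}^{\,j}A_{0}$ is exactly the induction the paper leaves implicit, and your index shift and convergence estimate match the stated formula.
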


\begin{proof}
The first part follows from the definition of $\RR_{q,z}$ in (\ref{r}) and (\ref{form-spettro-N}). Eq. (\ref{ps-r}) follows instead by writing
$$
(\RR_{q,z} g)(x) = (z \, \PP_{0,q}\, (1-z\, \PP_{1,q})^{-1} g) (x) = \sum_{n\ge 1}\, z^{n} (\PP_{0,q} \, \PP_{1,q}^{n-1} g)(x)
$$
and using
$$
(\PP_{1,q}^{n-1} g)(x) = \frac{1}{(S_{_{n-1}}\, x+S_{_n})^{2q}} \ g \left( \frac{S_{_{n-2}}
\, x + S_{_{n-1}}}{S_{_{n-1}}\, x+S_{_n}} \right)
$$
which can be proved by induction.
\end{proof}

\begin{remark} \label{rapporto-fibo}
From (\ref{ps-r}) it follows that the operators $\RR_{q,1}$ coincide with the generalised transfer operators of the Fibonacci map $H$ for functions $g \in \HH(B)$.
\end{remark}
\noindent
Analogously to $\QQ_{q,z}$ we now study the relations between $\RR_{q,z}$ and $\PP_{q}^{\pm}$.

\begin{theorem} \label{r-vs-p}
Let $f \in \HH^2_{q,\mu}$, with $c=b=0$. Then for $z = \frac 1 \mu \in \C \setminus \set{ (-1)^{k}\, \alpha^{-2(q+k)}}$
\begin{equation} \label{ug-funz-r}
\left(1\mp \RR_{q,z} \right)\, \left(1-z\, \PP_{1,q} \right)\, f\
=\ (1- z\, \PP_q^\pm )\, f 
\end{equation}
\end{theorem}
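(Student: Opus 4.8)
The plan is to imitate the proof of Theorem~\ref{q-vs-p}, which becomes markedly simpler under the present hypotheses. Since $c=b=0$ we have $f=\BB_q[\phi]$ with $\phi\in L^2(m_q)$, so $f$ lies in $\HH^2_q\subset\HH^1_q$; in particular no correction term of the form $\pm c\,\mu^x$ appears, and the half-integer exception $\Re(q)=\frac12$ plays no role, there being no $\frac{b}{t}$ piece. The identity will come from a single cancellation: once one knows that $(1-z\PP_{1,q})f$ is again the $\BB_q$-transform of an $L^2(m_q)$ function to which $\RR_{q,z}$ applies, the factor $(1-zN_q)$ produced by $1-z\PP_{1,q}$ is annihilated by the $(1-zN_q)^{-1}$ built into $\RR_{q,z}$.

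Concretely, I would proceed as follows. By Proposition~\ref{azione-su-spazi} (equivalently Remark~\ref{immagin-p1}), $\PP_{1,q}\BB_q[\phi]=\BB_q[N_q\phi]$, and since $N_q$ maps $L^2(m_q)$ to itself --- indeed it is trace class, with spectrum given by (\ref{form-spettro-N}) --- it follows that
$$(1-z\PP_{1,q})f=\BB_q[(1-zN_q)\phi]\in\HH^2_q\subset\HH^1_q,$$
so that $\RR_{q,z}$ may legitimately be applied to it. Next, the spectrum formula $\sigma(N_q)=\{0\}\cup\{(-1)^k\alpha^{2(q+k)}\}_{k\ge0}$ together with the standing assumption $z=\frac1\mu\notin\{(-1)^k\alpha^{-2(q+k)}\}_{k\ge0}$ ensures that $\frac1z$ is not a nonzero eigenvalue of $N_q$, hence $1-zN_q$ is boundedly invertible on $L^2(m_q)$ --- and this is precisely the range of $z$ for which $\RR_{q,z}$ was defined. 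Applying the defining formula (\ref{r}) of $\RR_{q,z}$ to $g=(1-z\PP_{1,q})f=\BB_q[(1-zN_q)\phi]$ then gives
$$\RR_{q,z}(1-z\PP_{1,q})f=z\,\PP_{0,q}\,\BB_q[(1-zN_q)^{-1}(1-zN_q)\phi]=z\,\PP_{0,q}\,\BB_q[\phi]=z\,\PP_{0,q}f.$$
Hence $(1\mp\RR_{q,z})(1-z\PP_{1,q})f=(1-z\PP_{1,q})f\mp z\PP_{0,q}f$, and since $\PP_q^\pm=\PP_{0,q}\pm\PP_{1,q}$ an elementary rearrangement rewrites the right-hand side as $(1-z\PP_q^\pm)f$, which is (\ref{ug-funz-r}); as in Theorem~\ref{q-vs-p} it suffices to display one choice of sign, e.g.
$$(1-\RR_{q,z})(1-z\PP_{1,q})f=(1-z\PP_{1,q})f-z\PP_{0,q}f=f-z(\PP_{0,q}+\PP_{1,q})f=(1-z\PP_q^{+})f,$$
the other case being identical.

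The only point requiring care is the well-posedness just used: that $\PP_{1,q}$ preserves $\HH^2_q$ (so that $\RR_{q,z}$ can act) and that $1-zN_q$ is invertible on $L^2(m_q)$ in the stated range of $z$. Both are inherited from results already available --- Proposition~\ref{azione-su-spazi}, Remark~\ref{immagin-p1}, and the spectral description (\ref{form-spettro-N}) of $N_q$ --- so I do not expect a real obstacle beyond this bookkeeping. In particular there is no counterpart here of the delicate Laplace-transform estimates or neutral-fixed-point complications met elsewhere in the paper: restricting to $c=b=0$ keeps the computation inside the $L^2(m_q)$ picture, where $\PP_{0,q}$ and $\PP_{1,q}$ act as the multiplier $M$ and the integral operator $N_q$ and $\RR_{q,z}$ acts simply as $z\,M(1-zN_q)^{-1}$ (the analogue of Remark~\ref{isom-l2}).
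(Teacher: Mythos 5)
Your argument is in substance identical to the paper's proof: write $f=\BB_q[\phi]$ with $\phi\in L^2(m_q)$, use $\PP_{1,q}\BB_q[\phi]=\BB_q[N_q\phi]$ (Proposition \ref{azione-su-spazi} with $c=0$) so that $(1-z\,\PP_{1,q})f=\BB_q[(1-z\,N_q)\phi]\in\HH^2_q$, and let the $(1-z\,N_q)^{-1}$ built into the definition (\ref{r}) of $\RR_{q,z}$ cancel this factor, yielding $\RR_{q,z}(1-z\,\PP_{1,q})f=z\,\PP_{0,q}f$. Your well-posedness bookkeeping (that $N_q$ is trace class on $L^2(m_q)$, that $1-z\,N_q$ is invertible precisely off the exceptional set $\set{(-1)^k\alpha^{-2(q+k)}}$ by (\ref{form-spettro-N}), and that $c=b=0$ keeps everything inside $\HH^2_q$) is exactly what the paper dismisses as ``follows easily from definitions'', so for the upper sign your proof is complete and matches the paper's.

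However, your closing claim that ``the other case is identical'' does not hold, and it is worth flagging even though the paper's own proof contains the same slip. Since $\RR_{q,z}$ returns $z\,\PP_{0,q}f$ and $\PP_{0,q}$ enters $\PP_q^\pm=\PP_{0,q}\pm\PP_{1,q}$ with a fixed $+$ sign, the lower-sign computation gives
\[
(1+\RR_{q,z})(1-z\,\PP_{1,q})f \;=\; f - z\,\PP_{1,q}f + z\,\PP_{0,q}f \;=\; (1+z\,\PP_q^-)f,
\]
which is not $(1-z\,\PP_q^-)f$ unless $\PP_q^-f=0$. (Contrast Theorem \ref{q-vs-p}: there $\QQ_{q,z}$ returns $z\,\PP_{1,q}f$, and $\PP_{1,q}$ is the operator carrying the $\pm$ in $\PP_q^\pm$, so both signs genuinely work.) The paper's displayed identity $(1\mp\RR_{q,z})\BB_q[(1-z N_q)\phi]=\BB_q[(1-zN_q-zM)\phi]$ likewise only reproduces the upper sign. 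So as written, neither your argument nor the paper's establishes the ``$-$'' case of (\ref{ug-funz-r}); the statement itself needs a sign adjustment there (for instance, the correct lower-sign identity obtained by this method is $(1+\RR_{q,z})(1-z\,\PP_{1,q})f=(1+z\,\PP_q^-)f$, or equivalently one must modify the operator or replace $z$ by $-z$ in appropriate places). You should either prove only the upper sign and derive the lower one separately, or correct the statement.
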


\begin{proof}
First of all, that the left-hand side of (\ref{ug-funz-r})
is well defined follows easily from definitions. Notice that in this case we need $c=b=0$ to be sure that $(1-z\, \PP_{1,q})\, f$ is in $\HH^1_q$. 
\noindent
Hence let $f=\BB_{q}[\phi]$ with $\phi \in L^2(m_q)$. Applying $(1- \RR_{q,z})$. It follows
$$
\left(1\mp \RR_{q,z} \right)\, \left(1-z\, \PP_{1,q} \right)\, f\
= \left(1\mp \RR_{q,z} \right)\, \BB_q \left[ (1-z\, N_{q}) \phi(t) \right] =
$$
$$
= \BB_q \left[ (1-z\, N_{q} -z\, M)\phi(t) \right] = (1- z\, \PP_q^\pm )\, \BB_{q}[\phi].
$$
\end{proof}
\noindent
We remark that using the power series expansion (\ref{ps-r}), one can prove that relation (\ref{ug-funz-r}) holds for all $f\in \HH(B)$ for $|z|< \alpha^{-2\xi}$.

\begin{corollary} \label{relaz-autof-r}
Let $z \in \C \setminus \set{ (-1)^{k}\, \alpha^{-2(q+k)}}$. The operator $\RR_{q,z}$ has an eigenfunction $g\in \HH^2_q$ with eigenvalue $\lambda_R = \pm 1$ if and only if $\PP_q^\pm$ has an eigenfunction $f$ with eigenvalue
$\lambda_P = \frac 1 z$ and $f \in \HH_{q,\mu}$ with $c=b=0$.
\end{corollary}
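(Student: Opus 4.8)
The plan is to follow the proof of Corollary~\ref{relaz-autof} essentially verbatim, with Theorem~\ref{r-vs-p} and the definition (\ref{r}) of $\RR_{q,z}$ in place of Theorem~\ref{q-vs-p} and (\ref{q}), and with the resolvent $(1-z\,e^{-t})^{-1}=(1-z\,M)^{-1}$ replaced by $(1-z\,N_q)^{-1}$. The structural facts I would lean on are: the dictionary of Remark~\ref{immagin-p1}, namely $\PP_{0,q}\BB_q[\psi]=\BB_q[M\psi]$ and $\PP_{1,q}\BB_q[\psi]=\BB_q[N_q\psi]$ together with injectivity of $\BB_q$; the fact that $N_q$ maps $L^2(m_q)$ into itself \cite{BGI,DEIK}; and the spectrum (\ref{form-spettro-N}) of $N_q$, which shows that $1-z\,N_q$ is invertible on $L^2(m_q)$ exactly when $z\notin\set{(-1)^k\alpha^{-2(q+k)}}_{k\ge0}$, i.e. precisely on the admissible range of $z$.

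For the direction "$\PP_q^\pm\Rightarrow\RR_{q,z}$" I would begin with an eigenfunction $f$ of $\PP_q^\pm$ of eigenvalue $\tfrac1z$ having $c=b=0$. By Corollary~\ref{autofunz} the conditions $c=b=0$ give $f=\BB_q[\phi]$ with $\phi\in L^2(m_q)$, so $f\in\HH^2_{q,\mu}$ with $c=b=0$ and Theorem~\ref{r-vs-p} is applicable. Since $(1-z\,\PP_q^\pm)f=0$, equation (\ref{ug-funz-r}) yields directly $(1\mp\RR_{q,z})g=0$ with $g:=(1-z\,\PP_{1,q})f$. By the dictionary, $g=\BB_q[(1-z\,N_q)\phi]$ with $(1-z\,N_q)\phi\in L^2(m_q)$, so $g\in\HH^2_q$; and $g\not\equiv0$, because $g\equiv0$ would force $(1-z\,N_q)\phi=0$ with $\phi\neq0$, putting $\tfrac1z$ in $\sigma(N_q)$ against the hypothesis on $z$ via (\ref{form-spettro-N}). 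Hence $g$ is an eigenfunction of $\RR_{q,z}$ with eigenvalue $\pm1$.

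For the converse "$\RR_{q,z}\Rightarrow\PP_q^\pm$" I would take $g=\BB_q[\phi]\in\HH^2_q$ with $\RR_{q,z}g=\pm g$ and set $f:=\BB_q[(1-z\,N_q)^{-1}\phi]$, which is a legitimate element of $\HH^2_q$ precisely because $z$ avoids the poles and so $1-z\,N_q$ is boundedly invertible on $L^2(m_q)$. Unwinding (\ref{r}) gives $\RR_{q,z}g=z\,\PP_{0,q}f$, and a short computation with $\phi=(1-z\,N_q)(1-z\,N_q)^{-1}\phi$ and the dictionary gives $g=(1-z\,\PP_{1,q})f$; feeding these into $\RR_{q,z}g=\pm g$ and rearranging produces $z\,\PP_q^\pm f=\pm f$, so $f$ is an eigenfunction of $\PP_q^\pm$ with eigenvalue $\tfrac1z$ (nonzero since $g$ is), lying in $\HH^2_q\subset\HH_{q,\mu}$ with $c=b=0$.

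The genuinely mechanical parts are the algebra with $M$, $N_q$ and $\BB_q$. The point I expect to demand care is the sign bookkeeping: one must verify that rearranging $\RR_{q,z}g=\pm g$ attaches the sign $\pm$ to the operator $\PP_q^\pm$ with eigenvalue exactly $\tfrac1z$, consistently with Theorem~\ref{r-vs-p}, and that every step is valid on the whole set $\C\setminus\set{(-1)^k\alpha^{-2(q+k)}}$ rather than only for small $|z|$ (where the power series (\ref{ps-r}) would be available). Note that, unlike in Corollary~\ref{relaz-autof}, no analogue of the term $\pm c\,\mu^x$ of (\ref{ug-funz}) appears, since Theorem~\ref{r-vs-p} already presumes $c=b=0$; consequently no sub-class of eigenfunctions needs to be isolated, and since $z=1$ is never a pole of $\RR_{q,z}$ there is also no special case at $z=1$.
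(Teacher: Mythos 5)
Your proposal is correct and takes essentially the same route as the paper, whose entire proof of this corollary is the single line ``The proof is as in Corollary \ref{relaz-autof}''; your write-up is just a careful unwinding of that instruction via Theorem \ref{r-vs-p}, the dictionary $\PP_{0,q}\BB_q[\psi]=\BB_q[M\psi]$, $\PP_{1,q}\BB_q[\psi]=\BB_q[N_q\psi]$, and the spectrum (\ref{form-spettro-N}). The sign subtlety you flag is real but is inherited from the paper rather than introduced by you: pushing the bottom sign through (\ref{ug-funz-r}) actually yields $(1+z\,\PP_q^-)f$ rather than $(1-z\,\PP_q^-)f$, so in the $\lambda_R=-1$ case the eigenvalue of $\PP_q^-$ comes out as $-\tfrac1z$; this discrepancy sits in the statement of Theorem \ref{r-vs-p} itself and affects the paper's one-line proof in exactly the same way as yours.
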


\begin{proof}
The proof is as in Corollary \ref{relaz-autof}.
\end{proof}
\noindent
Putting together Corollaries \ref{relaz-autof} and \ref{relaz-autof-r}, it follows that

\begin{corollary} \label{relaz-finale}
Let $z\in \C \setminus \left( (1,\infty) \cup \set{(-1)^{k}\, \alpha^{-2(q+k)}}\right)$. Then $f\in \HH^2_{q,\mu}$ with $c=b=0$ is an eigenfunction of $\PP_q^\pm$ with eigenvalue $\lambda_P = \frac 1 z=\mu$ if and only if 
$$
f(x) = h_{0}(x) + h_{1}(x)
$$
with $h_{0}$ and $h_{1}$ eigenfunctions of $\QQ_{q,z}$ and $\RR_{q,z}$ respectively in $\HH^2_{q}$, with eigenvalues $\lambda_{Q}=\lambda_{R}= \pm 1$.
\end{corollary}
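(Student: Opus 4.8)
The plan is to obtain this directly by chaining Corollaries~\ref{relaz-autof} and~\ref{relaz-autof-r}, read through the tautological splitting
$$
f \;=\; z\,\PP_{1,q}f \;+\; (1-z\,\PP_{1,q})f ,
$$
which is legitimate for every $f\in\HH^2_q$: by Remark~\ref{immagin-p1}, $\PP_{1,q}$ sends $\HH^2_q$ into itself, so both summands again lie in $\HH^2_q$. It therefore suffices to prove that, for $f\in\HH^2_q$, the two summands $h_0:=z\,\PP_{1,q}f$ and $h_1:=(1-z\,\PP_{1,q})f$ are simultaneously an eigenfunction of $\QQ_{q,z}$ and an eigenfunction of $\RR_{q,z}$, with a common eigenvalue $\pm1$, exactly when $\PP_q^\pm f=\frac1z f$.

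For the direction ($\Rightarrow$) I would rewrite $\PP_q^\pm f=\frac1z f$ as $z\,\PP_{0,q}f\pm z\,\PP_{1,q}f=f$, that is as $z\,\PP_{1,q}f=\pm(1-z\,\PP_{0,q})f$. Since $f\in\HH^2_q$ has $c=0$, Theorem~\ref{q-vs-p} applies and, combined with the explicit relation~(\ref{g-vs-f-explicit}), shows that $(1-z\,\PP_{0,q})f$ is an eigenfunction of $\QQ_{q,z}$ with eigenvalue $\pm1$; hence so is $h_0=z\,\PP_{1,q}f=\pm(1-z\,\PP_{0,q})f$. Running the same argument with $\PP_{0,q}$ and $\PP_{1,q}$ interchanged --- that is, via Theorem~\ref{r-vs-p} and Corollary~\ref{relaz-autof-r}, being attentive to the signs in the $\PP_q^-$ case --- gives that $h_1=(1-z\,\PP_{1,q})f$ is an eigenfunction of $\RR_{q,z}$ with eigenvalue $\pm1$, while $h_0+h_1=f$ by construction.

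For the converse I would read off from~(\ref{q}) and~(\ref{r}) that $\QQ_{q,z}=z\,\PP_{1,q}(1-z\,\PP_{0,q})^{-1}$ and $\RR_{q,z}=z\,\PP_{0,q}(1-z\,\PP_{1,q})^{-1}$ on the pertinent invariant subspace (with the usual care at $z=1$, where one works on $\tilde \HH^1_q$). Given $h_0\in\HH^2_q$ with $\QQ_{q,z}h_0=\pm h_0$, set $u:=(1-z\,\PP_{0,q})^{-1}h_0$; then $z\,\PP_{1,q}u=\pm(1-z\,\PP_{0,q})u$, and since $\PP_{1,q}$ is injective on $\HH(B)$ this is equivalent to $\PP_q^\pm u=\frac1z u$, so $u\in\HH^2_q$ is an eigenfunction of $\PP_q^\pm$ with $h_0=\pm z\,\PP_{1,q}u$; symmetrically, an eigenfunction $h_1$ of $\RR_{q,z}$ produces an eigenfunction $v$ of $\PP_q^\pm$ with $h_1=(1-z\,\PP_{1,q})v$. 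What remains is to match these up: one must show that the eigenfunctions $u$ and $v$ recovered from $h_0$ and $h_1$ coincide --- equivalently, that the splitting $f=h_0+h_1$ is forced to be the canonical one $z\,\PP_{1,q}f+(1-z\,\PP_{1,q})f$ --- after which $f=h_0+h_1=u$ is an eigenfunction of $\PP_q^\pm$ with eigenvalue $\frac1z=\mu$. I expect this compatibility step --- pinning the two corollaries down to a single eigenfunction on the common eigenspace of $\PP_q^\pm$ --- to be the main obstacle.
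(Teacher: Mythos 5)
Your forward implication is essentially the paper's proof. The paper's entire argument is the single line ``use $h_0:=(1-z\,\PP_{0,q})f$ and $h_1:=(1-z\,\PP_{1,q})f$''; since an eigenfunction with $\PP_q^\pm f=\frac1z f$ satisfies $(1-z\,\PP_{0,q})f=\pm z\,\PP_{1,q}f$, your choice $h_0=z\,\PP_{1,q}f$ is the same object up to sign, and it has the small advantage that $h_0+h_1=f$ holds identically for both signs, whereas the paper's literal pair sums to $2f-z\,\PP_q^+f$, which equals $f$ only in the $+$ case. So on the substantive direction you are aligned with, and marginally cleaner than, the paper.

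Two caveats, neither of which the paper resolves either. First, the compatibility problem you flag in the converse is genuine and cannot be closed as stated: if $h_0$ and $h_1$ arise via Corollaries \ref{relaz-autof} and \ref{relaz-autof-r} from two eigenfunctions $u\neq v$ of $\PP_q^\pm$ (e.g.\ replace $h_1$ by $2h_1$, so $v=2u$), then $h_0+h_1=z\,\PP_{1,q}u+z\,\PP_{0,q}v$ is in general not an eigenfunction of $\PP_q^\pm$, because $\PP_{0,q}u$ and $\PP_{1,q}u$ are not separately eigenfunctions. The ``if'' direction therefore only holds either at the level of existence (which follows from each cited corollary on its own, with no matching needed) or with the canonical decomposition understood; you should read the obstacle you identified as an imprecision in the statement rather than a step you failed to find, and your proposal is no less complete than the paper's proof here. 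Second, your phrase ``being attentive to the signs in the $\PP_q^-$ case'' papers over a point where the interchange of $\PP_{0,q}$ and $\PP_{1,q}$ does not go through verbatim: with the paper's conventions one computes $\RR_{q,z}(1-z\,\PP_{1,q})f=z\,\PP_{0,q}f$, and $\PP_q^-f=\frac1z f$ gives $z\,\PP_{0,q}f=(1+z\,\PP_{1,q})f$, which is not $-(1-z\,\PP_{1,q})f$; the eigenvalue $-1$ of $\RR_{q,z}$ actually pairs with the eigenvalue $-\frac1z$ of $\PP_q^-=\PP_{0,q}-\PP_{1,q}$. This sign slip originates in the last display of the proof of Theorem \ref{r-vs-p}, where $(1-zN_q\mp zM)$ is written as $(1-zN_q-zM)$, and it affects the paper's Corollaries \ref{relaz-autof-r} and \ref{relaz-finale} exactly as it would affect your argument; if you want a watertight proof you must either restrict to the $+$ sign or restate the $\RR_{q,z}$ correspondence with the corrected eigenvalue.
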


\begin{proof}
Simply use that $h_{0}:=(1 - z \PP_{0,q}) f$ and $h_{1}:=(1 - z \PP_{1,q}) f$.
\end{proof}

\begin{remark}
Recall that relation (\ref{ug-funz-r}) can be extended to functions in $\HH(B)$, and that Corollary \ref{relaz-autof} holds for functions $f\in \HH^2_{q,1}$ with $b$ not necessarily vanishing. An example of an eigenfunction as in Corollary \ref{relaz-finale} is given for $q=1$ by 
$$
\frac 1 x = \frac{1}{1+x} + \frac{1}{x(x+1)}
$$
\end{remark}

\section{Two-variable zeta functions of Ruelle and Selberg} \label{sect-zeta}
The first two-variable zeta function we are interested in can be written in terms of the Gauss map as
\begin{equation} \label{zeta-sel-gen}
Z(q,z):=\exp \left( -\sum_{n\ge 1} \ \frac{z^n}{n}\, \sum_{x=G^{2n}(x)} {\left| (G^{2n})'(x) \right|^{-q} \over 1- \left| (G^{2n})'(x) \right|^{-1}}
\right)
\end{equation}
For $z=1$ and $\xi=\Re(q) > 1/2$ the function $Z(q,1)$ coincides with the \emph{Selberg zeta function} for the full modular group. This follows from the well known
one-to-one correspondence between the length spectrum (with multiplicities) of the modular surface $PSL(2, \Z)\setminus \hp$ and the set of values $\log
|(G^{2n})'(x)|$ (see e.g. \cite{Se}). 
Another zeta function which naturally comes into the play \cite{Rue} is 
the \emph{Ruelle zeta function} of the Farey map $F$, defined for $|z|$ small enough by
\begin{equation} \label{zeta}
\zeta (q,z) := \exp \left( \sum_{n\ge 1} \ \frac{z^n}{n}\,
\sum_{x=F^n(x)} \left| (F^n)'(x) \right|^{-q} \right)
\end{equation}
We shall study these functions by means of the operator-valued functions dealt with in the previous section. Our approach is similar in
spirit to that used in \cite{Ma3} for $Z(q,1)$. But first we describe the correspondence between the periodic points of the map $F$ and those of its induced versions $G$ (\ref{indg}) and $H$ (\ref{fibonacci-map}). Denoting
${\rm Per}\, F$, ${\rm Per}\, G$ and
${\rm Per}\, H$ the corresponding subsets of $[0,1]$ we have 
$$
{\rm Per}\, F \setminus \{0\}\cup \{\alpha\} ={\rm Per}\, G  \setminus \{\alpha\}=  {\rm Per}\, H \setminus \{0\}
$$ 
From the definitions we immediately see that whenever $x$ belongs to either of these sets its continued fraction expansion has to be periodic, which we write
$$
x= [{\overline {a_1,\cdots, a_n}}]
$$
Denoting $p_F(x)$, $p_G(x)$ and $p_H(x)$ the corresponding periods we have
$$
p_F(x) = \sum_{i=1}^n a_i   \quad , \quad p_G(x)=n \quad , \quad p_H(x)= p_F(x)-\#\{i\in [1,n]\, : \, a_i=1\}
$$
In other words, if for a given map $T:[0,1]\to [0,1]$ we define the {\sl partition function}
$$
Z_n(q,T) :=\sum_{x=T^n(x)} \left| (T^n)'(x) \right|^{-q}
$$
then
$$
Z_n(q,F)=1+\sum_{m=1}^n {n\over m}\, Z_m(q,G)=\alpha^{2qn}+\sum_{m=1}^n {n\over m}\, Z_m(q,H)
$$
Let moreover
\begin{equation} \label{pressure}
\lambda (q) := \lim_{n\to \infty} {1\over n} \log Z_n(q,F)
\end{equation}
It follows from thermodynamic formalism that the above limit exists for all $q\in \R$ and is a differentiable and monotonically decreasing function for $q\in (-\infty, 1)$, with 
$\lim_{q\to 1_-}\lambda (q)=1$ and $\lambda (q)=1$ for all $q\geq 1$ (\cite{PS}). In particular, for $q\in (-\infty,1)$ the function $\zeta (q,z)$ converges absolutely for $|z|< 1/\lambda (q)$ and has a simple pole at $z=1/\lambda
(q)$.

\noindent
Our aim is now to express both $Z(q,z)$ and $\zeta(q,z)$ in terms of Fredholm determinants of the operators $\QQ_{q,z}$ introduced in Section \ref{induced-op}. Following Mayer \cite{Ma1}, we restrict the operators $\QQ_{q,z}$ to the Banach space $A_\infty (D)$ of functions which are holomorphic on $D$ and continuous on $\overline{D}$ where
$$
D:= \set{x\in \C\ :\ \left| x-1 \right| < \frac 3 2 - \eps}.
$$
for small $\eps>0$. The space $A_\infty(D)$ is the set of holomorphic functions on which it is natural to study the spectral properties of $\QQ_{q,z}$ written as in Theorem \ref{gauss-map}.

We now prove that

\begin{proposition} \label{immersione}
If $g$ is in $A_\infty(D)$ then $g$ is in $\tilde \HH^{2}_{q}$ and $g=\BB_q[\phi_{q}]$ with $\phi_{q}(t) = \phi_{q}(0) + O(t)$ as $t\to 0^+$.
\end{proposition}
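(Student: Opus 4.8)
The plan is to represent $g$ as a $\BB_q$ transform by inverting a Laplace transform, subtracting first a Taylor polynomial of $g$ at the origin so as to gain enough vanishing there; the reason this works — and the reason it is not a mere change of variables — is that the radius $r:=\frac{3}{2}-\eps$ of $D$ exceeds $1$.

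First I would record the geometry. Since $r>1$ we have $\overline B\subset D$, so restriction gives $A_\infty(D)\subset\HH(B)$; and the inversion $w\mapsto 1/w$ maps $D$ onto the exterior of a disc centred on the negative real axis whose right-most point is exactly $\sigma_1:=\frac{1}{r+1}$, the point here being that $\sigma_1<\frac{1}{2}$. Consequently, for $g\in A_\infty(D)$ the function $h(y):=y^{-2q}\,g(1/y)$ is holomorphic on the half-plane $\set{\Re(y)>\sigma_1}$, and since $x=1/y$ maps $\set{\Re(y)>1}$ conformally onto $B$, the relation $\BB_q[\phi]=g$ on $B$ amounts, by \eqref{borel-def}, to the Laplace transform identity $\LL(t^{2q-1}\phi)=h$. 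So the whole problem is to invert the Laplace transform of $h$ with control on $\LL^{-1}(h)$ both at $t\to+\infty$ and at $t\to 0^+$.

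Fix an integer $N>2\Re(q)+1$ and write $g=P+\tilde g$, where $P$ is the degree-$(N-1)$ Taylor polynomial of $g$ at $0$ and $\tilde g\in A_\infty(D)$ vanishes to order at least $N$ at $0$. A direct computation from \eqref{power-trans} gives $\BB_q[t^n]=\Gamma(n+2q)\,x^n$, hence $P=\BB_q[\phi_P]$ with $\phi_P(t)=\sum_{n<N}\frac{g_n}{\Gamma(n+2q)}\,t^n$, a polynomial, so in $L^2(m_q)$, with $\phi_P(0)=g(0)/\Gamma(2q)$ and $\phi_P(t)=\phi_P(0)+O(t)$. For the remainder put $\tilde h(y):=y^{-2q}\tilde g(1/y)$. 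For each $\sigma>\sigma_1$ the set $\set{1/y\,:\,\Re(y)\ge\sigma}$ is a compact subset of $D$; since $\tilde g$ is holomorphic on $D$ and vanishes to order $N$ at $0$, this yields $|\tilde g(1/y)|=O(|y|^{-N})$ and hence $|\tilde h(y)|=O(|y|^{-N-2\Re(q)})$, uniformly on $\set{\Re(y)\ge\sigma}$. As $N+2\Re(q)>1$, the Bromwich integral $\rho(t):=\frac{1}{2\pi i}\int_{\Re(y)=\sigma}e^{ty}\,\tilde h(y)\,dy$ converges absolutely, is independent of $\sigma>\sigma_1$, vanishes for $t<0$, satisfies $\LL(\rho)=\tilde h$ and $|\rho(t)|\le C_\sigma\,e^{\sigma t}$; moreover, pushing the contour to $\Re(y)\to+\infty$ gives $\rho^{(k)}(0)=0$ for all $k<N+2\Re(q)-1$, so $\rho(t)=O(t^{N-1})$ as $t\to 0^+$.

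Finally set $\phi_N(t):=t^{1-2q}\rho(t)$, so $\BB_q[\phi_N]=\tilde g$ on $B$. Choosing $\sigma\in(\sigma_1,\frac{1}{2})$ — possible precisely because $\sigma_1<\frac{1}{2}$ — the bound $|\rho(t)|\le C_\sigma e^{\sigma t}$ together with $1-2\sigma>0$ makes $|\phi_N(t)|^2\,t^{2\Re(q)-1}e^{-t}$ exponentially small at $+\infty$, while $\rho(t)=O(t^{N-1})$ with $N>\Re(q)$ makes the same quantity integrable near $0$; hence $\phi_N\in L^2(m_q)$, and $|\phi_N(t)|=O(t^{N-2\Re(q)})=O(t)$ as $t\to 0^+$ since $N>2\Re(q)+1$. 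Then $\phi:=\phi_P+\phi_N\in L^2(m_q)$ gives $g=\BB_q[\phi]$ with $c=b=0$, i.e. $g\in\HH^2_q$, and $\phi(t)=\phi(0)+O(t)$ with $\phi(0)=g(0)/\Gamma(2q)$, i.e. $g\in\tilde\HH^2_q$. The main obstacle — and what makes the statement nontrivial — is the $L^2(m_q)$ control as $t\to+\infty$: estimating $\phi$ crudely from the Taylor coefficients of $g$ at $0$ only yields $|\phi(t)|\lesssim e^{\beta t}$ with $\beta=(r-1)^{-1}>2$, which is useless against the weight $e^{-t}$ of $m_q$; one has to use holomorphy of $g$ on all of $D$, quantified by $\sigma_1=(r+1)^{-1}<\frac{1}{2}$, to replace $\beta$ by some $\sigma<\frac{1}{2}$. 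The behaviour at $t\to 0^+$ is the secondary point, dealt with by peeling off the polynomial $P$.
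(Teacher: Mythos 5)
Your proof is correct, but it takes a genuinely different route from the paper's. The paper expands $g$ in powers of $(x-1)$ (the centre of $D$) and uses the generalized Laguerre polynomials $e_n^q=L_n^{2q-1}$, which form an orthogonal basis of $L^2(m_q)$ and satisfy $\BB_q[e_n^q]=\frac{\Gamma(n+2q)}{n!}(-1)^n(x-1)^n$; the function $\phi$ is then produced as an explicit Laguerre series, its $L^2(m_q)$ membership drops out of Parseval together with the geometric decay $\limsup\sqrt[n]{|a_n|}\le(\frac32-\eps)^{-1}$, and the $\phi(t)=\phi(0)+O(t)$ behaviour comes from termwise evaluation of the series and its derivative at $t=0$ (with complex $q$ handled by reducing to $\xi=\Re(q)$ and $L^2(m_\xi)\subset L^2(m_q)$). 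You instead invert the Laplace transform directly by a Bromwich integral on a line $\Re(y)=\sigma$ with $\sigma_1=(r+1)^{-1}<\sigma<\frac12$, after peeling off a Taylor polynomial of $g$ at the origin to control $t\to0^+$. Your approach buys something real: it avoids special functions, it makes completely explicit why holomorphy on all of $D$ (rather than just near $\overline B$) is what defeats the weight $e^{-t}$, and the pointwise bound $|\phi(t)|\lesssim e^{\sigma t}$ with $\sigma<\frac12$ actually yields $\phi\in L^p(m_q)$ for every $p$, not just $p=2$; it also treats complex $q$ directly. The paper's approach is shorter because Parseval does the $L^2$ estimate in one line. Two cosmetic points in your write-up: the set $\set{1/y:\Re(y)\ge\sigma}$ is a punctured closed disc, hence not literally compact --- what you use is that its closure is a compact subset of $D$, which holds precisely when $\sigma>\sigma_1$; and the conclusion $\rho(t)=O(t^{N-1})$ from the vanishing of $\rho^{(k)}(0)$ for $k\le N-1$ needs the (true, but worth saying) continuity of $\rho^{(N-1)}$ near $0$, guaranteed by $N-1<N+2\Re(q)-1$. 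Neither affects the validity of the argument.
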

\noindent
In particular since $A_\infty(D) \subset \tilde\HH^{2}_{q}$ we can study the action of $\QQ_{q,z}$ on $A_\infty(D)$ for all $z\in \C \setminus (1,+\infty)$ and $\xi>0$.

\begin{proof}[Proof of Proposition \ref{immersione}]
We recall the definition of the generalized Laguerre polynomials
\begin{equation} \label{lag-pol}
L_n^{2q-1} (t) = \sum_{m=0}^n\, \frac{\Gamma(n+2q)}{\Gamma(m+2q)\, (n-m)!}\, \frac{(-t)^m}{m!},\qquad \xi= \Re(q)>0
\end{equation}
which satisfy
\begin{equation} \label{trasf-en}
\BB_q[L_n^{2q-1}](x) = \frac{\Gamma(n+2q)}{n!}\, (-1)^n (x-1)^n
\end{equation}
Let now $g(x)$ be in $A_\infty(D)$. It can be expressed as a power series
$$
g(x) = \sum_{n=o}^\infty\, a_n\, (x-1)^n \qquad \ a_n \in \C 
$$
with
\begin{equation}\label{vel-an}
\limsup \, \sqrt[n]{|a_n|} \le \left(\frac 32 - \eps\right)^{-1} .
\end{equation}
Hence from (\ref{trasf-en}) we can write
$$
g(x) = \sum_{n=0}^\infty\, \frac{(-1)^n\, a_n\, n!}{\Gamma(n+2q)}\, \BB_q[L_n^{2q-1}](x) 
$$
Letting
$$
\phi_{q}(t) := \sum_{n=0}^\infty\, \frac{(-1)^n\, a_n\, n!}{\Gamma(n+2q)}\, L_n^{2q-1} (t)
$$
we need to prove that $\phi_{q} \in L^{2}(m_{q})$ and that $g=\BB_{q}[\phi_{q}]$.

That $\phi_{q} \in L^{2}(m_{q})$ follows from the following estimates on $\|L_{n}^{2q-1}(t)\|_{L^{2}(m_{q})}$. Using computations from \cite{lag}, we have
\begin{equation} \label{norma-l2-lag}
\begin{array}{c}
\|L_{n}^{2q-1}(t)\|_{L^{2}(m_{q})}^{2} = \int_{0}^{\infty}\, L_{n}^{2q-1}(t)\, L_{n}^{2\bar q-1}(t)\, t^{2\xi -1} e^{-t}\, dt = \\[0.3cm]
= \left( \begin{array}{c} n+2\imath \eta -1 \\ n \end{array}\right)\, \left( \begin{array}{c} n-2\imath \eta -1 \\ n \end{array}\right)\, \Gamma(2\xi)\, {}_{3}F_{2}\left( \begin{array}{c} -n, -n, 2\xi\\ -2\imath \eta -n +1, 2\imath \eta -n +1\end{array}; 1\right)
\end{array}
\end{equation}
where we set $\eta := \Im(q)$ and ${}_{3}F_{2}$ is the generalised hypergeometric function. Moreover
$$
{}_{3}F_{2}\left( \begin{array}{c} -n, -n, 2\xi\\ -2\imath \eta -n +1, 2\imath \eta -n +1\end{array}; 1\right) = \sum_{k=0}^{n}\, \frac{(-n)_{k}^{2}\, \Gamma(2\xi+k)\, \Gamma(-2\imath \eta -n +1)\, \Gamma(2\imath \eta -n +1)}{\Gamma(2\xi)\, \Gamma(-2\imath \eta -n +1+k)\, \Gamma(2\imath \eta -n +1+k)\, k!}
$$
where $(-n)_{k} = n(n-1)\dots (n-k+1)$ is the Pochhammer symbol, and
$$
\frac{\Gamma(-2\imath \eta -n +1)\, \Gamma(2\imath \eta -n +1)}{\Gamma(-2\imath \eta -n +1+k)\, \Gamma(2\imath \eta -n +1+k)} = \prod_{j=1}^{k}\, \frac{1}{(n-j)^{2}+4\eta^{2}}
$$
It follows that for $\xi>0$ and $\eta \not= 0$
$$\left| {}_{3}F_{2}\left( \begin{array}{c} -n, -n, 2\xi\\ -2\imath \eta -n +1, 2\imath \eta -n +1\end{array}; 1\right) \right| \le \frac{n^{2}}{\min\{ 1, 4\eta^{2}\}} \, \sum_{k=0}^{n}\, \left( \begin{array}{c} k+2\xi -1 \\ k \end{array}\right)\le \frac{(n+2\xi)^{2\xi +2}}{\min\{ 1, 4\eta^{2}\}}$$
Hence
\begin{equation} \label{finita}
\sum_{n=0}^{\infty}\, \left\| \frac{(-1)^n\, a_n\, n!}{\Gamma(n+2q)}\, L_n^{2q-1} (t) \right\|_{L^{2}(m_{q})} \le \sum_{n=0}^{\infty}\, \frac{|a_{n}|\, n!}{|\Gamma(n+2q)|}\, \frac{|\Gamma(n+2\imath \eta)|}{|\Gamma(2\imath \eta)|\, n!}\, \frac{\sqrt{\Gamma(2\xi)}\, (n+2\xi)^{\xi +1}}{\sqrt{\min\{ 1, 4\eta^{2}\}}}<\infty
\end{equation}
by \eqref{vel-an} and since
$$\frac{|\Gamma(n+2\imath \eta)|}{|\Gamma(n+2q)|} \le n\qquad \forall\, n\ge 1$$
by standard estimates. If instead $\xi>0$ and $\eta=0$ then \eqref{norma-l2-lag} reads
$$\|L_{n}^{2q-1}(t)\|_{L^{2}(m_{q})}^{2} = \frac{\Gamma(n+2q)}{n!}$$ and \eqref{finita} follows again. Condition \eqref{finita} implies that $\phi_{q}\in L^{2}(m_{q})$. Moreover it satisfies
$$
\phi_{q}(0) = \sum_{n=0}^\infty\, \frac{(-1)^n\, a_n\, n!}{\Gamma(n+2q)}\, L_n^{2q-1}(0) = \sum_{n=0}^\infty\, \frac{(-1)^n\, a_n\, n!}{\Gamma(n+2q)}\, \frac{\Gamma(n+2q)}{\Gamma(2q)\, n!} \in \C
$$
$$
\lim\limits_{t\to 0^+} \frac{\phi(t) - \phi(0)}{t} =  - \sum_{n=0}^\infty\, \frac{(-1)^n\, a_n\, n!}{\Gamma(n+2q)}\, \frac{\Gamma(n+2q)}{\Gamma(2q+1)\, (n-1)!} \in \C
$$
To finish the proof we have to show that $g(x)= \BB_q[\phi_{q}](x)$ for $x\in B$. This follows from
$$
\left| \BB_q[\phi_{q}] - \sum_{n=0}^N\, \frac{(-1)^n\, a_n\, n!}{\Gamma(n+2q)}\, \BB_q[L_n^{2q-1}] \right| \le
\int_0^\infty\, \left| e^{-\frac tx}\, t^{2q-1} \,  \sum_{n>N}\, \frac{(-1)^n\, a_n\, n!}{\Gamma(n+2q)}\, L_n^{2q-1}(t) \right|\, dt \le
$$
$$
\le \int_0^\infty\, \left| e^{-t}\, t^{2q-1} \,  \sum_{n>N}\, \frac{(-1)^n\, a_n\, n!}{\Gamma(n+2q)}\, L_n^{2q-1}(t) \right|\, dt =
\int_0^\infty\, \left| \sum_{n>N}\, \frac{(-1)^n\, a_n\, n!}{\Gamma(n+2q)}\, L_n^{2q-1}(t) \right|\, m_q(dt)
$$
and the last term vanishes as $N\to \infty$ since $\phi_{q} \in L^2(m_q) \subset L^1(m_q)$.
\end{proof}

We now recall that by (\ref{serie-g}) in the proof of Corollary \ref{autofunz-gauss}, the eigenfunctions $g\in \tilde \HH^{2}_{q}$ of $\QQ_{q,z}$ are in $A_{\infty}(D)$. Hence we get
\begin{theorem} \label{isomorfismo-analitic}
The operator-valued function $q \mapsto \QQ_{q,z}:A_{\infty}(D) \to \HH(B)$ is analytic in $\Re(q)>0$ for $z\in \C \setminus [1,\infty)$, and in $\Re(q)>\frac 12$ for $z=1$. Moreover for $z=1$, the function $q \mapsto \QQ_{q,1}$ can be extended to a meromorphic function in $\Re(q)>0$ with a simple pole at $q = \frac 1 2$ with residue the operator $g \mapsto (\RR_{\frac 1 2} g)(x) = g(0)$.
\end{theorem}

\begin{proof}
We use the integral representation of Theorem \ref{thm:lerch-zeta} and Proposition \ref{immersione} to write for each $g\in A_{\infty}(D)$
\begin{equation} \label{int-rep-fin}
(\QQ_{q,z}g)(x) = z\, \int_{0}^{\infty}\, \frac{e^{-t(x+1)}\, t^{2q-1}}{1-ze^{-t}}\, \sum_{n=0}^\infty\, \frac{(-1)^n\, a_n\, n!}{\Gamma(n+2q)}\, L_n^{2q-1} (t)\, dt
\end{equation}
where
$$g(x) = \sum_{n=0}^\infty\, a_n\, (x-1)^n$$
and $\{a_{n}\}$ satisfies \eqref{vel-an}. Consider first the case $z\in \C \setminus [1,\infty)$, for which $|1-ze^{-t}|$ is bounded by a constant for all $t\in [0,\infty)$. Moreover, since 
$$
|e^{-t(x+1)}| \le  e^{-t} \qquad \forall\, x\in B
$$
we can argue as in the end of Proposition \ref{immersione} to write
$$(\QQ_{q,z}g)(x) = z\, \sum_{n=0}^\infty\, \frac{(-1)^n\, a_n\, n!}{\Gamma(n+2q)}\, \int_{0}^{\infty}\, \frac{e^{-t(x+1)}\, t^{2q-1}}{1-ze^{-t}}\, L_n^{2q-1} (t)\, dt$$
Since
$$\sum_{n=0}^\infty\, \sup_{x\in B} \left|  \frac{(-1)^n\, a_n\, n!}{\Gamma(n+2q)}\, \int_{0}^{\infty}\, \frac{e^{-t(x+1)}\, t^{2q-1}}{1-ze^{-t}}\, L_n^{2q-1} (t)\, dt \right| \le$$ $$\le  \sum_{n=0}^\infty\, \frac{|a_n|\, n!}{|\Gamma(n+2q)|}\, \left( \int_{0}^{\infty}\, \frac{e^{-t}\, t^{2\xi-1}}{|1-ze^{-t}|^{2}}\, dt \right)^{\frac 12}\, \|L_{n}^{2q-1}\|_{L^{2}(m_{q})} < \infty$$
as in \eqref{finita}, the operators $\QQ_{q,z}$ are bounded and to conclude we need to show that for any bounded domain $C\subset \{\Re(q)>0\}$ it holds 
$$\sum_{n=0}^\infty\, \sup_{q\in C} \sup_{x\in B} \left|  \frac{(-1)^n\, a_n\, n!}{\Gamma(n+2q)}\, \int_{0}^{\infty}\, \frac{e^{-t(x+1)}\, t^{2q-1}}{1-ze^{-t}}\, L_n^{2q-1} (t)\, dt \right| <\infty$$
From \eqref{norma-l2-lag} and arguing as in \eqref{finita}, we have
$$\sum_{n=0}^\infty\, \sup_{q\in C} \sup_{x\in B} \left|  \frac{(-1)^n\, a_n\, n!}{\Gamma(n+2q)}\, \int_{0}^{\infty}\, \frac{e^{-t(x+1)}\, t^{2q-1}}{1-ze^{-t}}\, L_n^{2q-1} (t)\, dt \right| \le$$ $$\le \sum_{n=0}^\infty\, \sup_{q\in C} \frac{|a_n|\, n!}{|\Gamma(n+2q)|}\, \left( \int_{0}^{\infty}\, \frac{e^{-t}\, t^{2\xi-1}}{|1-ze^{-t}|^{2}}\, dt \right)^{\frac 12}\, \|L_{n}^{2q-1}\|_{L^{2}(m_{q})}\le $$ $$\le \frac{1}{\min_{t\in \R^+} |1-ze^{-t}|}\, \sum_{n=0}^\infty\, \sup_{q\in C} \frac{|a_n|\, \Gamma(2\xi)}{|\Gamma(2\imath \eta)|\, \sqrt{\min\{1, 4\eta^2\}}}\, (n+2\xi)^{\xi+2} < \infty$$
in the case $C \cap \{\Im(q)=0\}=\emptyset$, and by using the value of $\|L_{n}^{2q-1}\|_{L^{2}(m_{q})}$ in the real case if $C\cap \{\Im(q)=0\}\not=\emptyset$, where we recall that $|\Gamma(2\imath \eta)|\, \eta$ is bounded at $\eta=0$, having the Gamma function a simple pole at 0. This shows that the result holds for $z\in \C \setminus [1,\infty)$.

Let us consider now the case $z=1$. We rewrite \eqref{int-rep-fin} as
$$
(\QQ_{q,1}g)(x) =  \int_{0}^{\infty}\, \frac{e^{-t(x+1)}\, t^{2q-1}}{1-e^{-t}}\, \phi_{q}(t)\, dt = $$ $$=\int_{0}^{\infty}\, \frac{e^{-t(x+1)}\, t^{2q-1}}{1-e^{-t}}\, \phi_{q}(0)\, dt + \int_{0}^{\infty}\, \frac{e^{-t(x+1)}\, t^{2q-1}\, t}{1-e^{-t}}\, \frac{\phi_{q}(t)-\phi_{q}(0)}{t}\, dt=
$$
$$
= \Phi(1,2q,x+1)\, g(0) + \int_{0}^{\infty}\, \frac{e^{-t(x+1)}\, t^{2q-1}}{(1-e^{-t})/t}\, \sum_{n=0}^\infty\, \frac{(-1)^n\, a_n\, n!}{\Gamma(n+2q)}\, \frac{L_n^{2q-1} (t)-L_n^{2q-1} (0)}{t}\, dt
$$
where $\Phi(z,2q,x+1)$ denotes the Lerch zeta function (see Remark \ref{rem:lerch}). For the second term we show that we can repeat the same argument as above. Indeed $(1-e^{-t})/t$ is a bounded function on $[0,\infty)$ and for any $\delta>0$
$$
\left| \frac{L_n^{2q-1} (t)-L_n^{2q-1} (0)}{t} \right| \le \left\{ \begin{array}{ll} \frac 1\delta \sqrt{|L_n^{2q-1} (t)|^{2}+|L_n^{2q-1} (0)|^{2}} & \text{if $t\ge \delta$}\\[0.2cm] \frac 1\delta \frac{|\Gamma(n+2q)|}{n!}\, (1+\delta)^{n} & \text{if $t\le \delta$} \end{array} \right.
$$
from which it follows that
$$
\left\|\frac{L_n^{2q-1} (t)-L_n^{2q-1} (0)}{t}\right\|_{L^{2}(m_{q})} \le $$ $$ \le \frac 1\delta \left( \| L_n^{2q-1}\|_{L^{2}(m_{q})} + \sqrt{\Gamma(2\xi)} |L_n^{2q-1} (0)| + \sqrt{\Gamma(2\xi)} |\Gamma(2q) L_n^{2q-1} (0)|\, (1+\delta)^{n} \right)
$$
Finally, using \eqref{finita} and $L_n^{2q-1} (0) = \frac{\Gamma(n+2q)}{n!}$, by choosing $\delta$ small enough such that
$$
\limsup_{n\to\infty}\, \sqrt[n]{|a_{n}|}\, (1+\delta) <1,
$$
which is possible by \eqref{vel-an}, we obtain
$$
\sum_{n=0}^\infty\, \frac{|a_n|\, n!}{|\Gamma(n+2q)|}\, \left\|\frac{L_n^{2q-1} (t)-L_n^{2q-1} (0)}{t}\right\|_{L^{2}(m_{q})} <\infty
$$
and from this we can prove as above that
$$
q\to \int_{0}^{\infty}\, \frac{e^{-t(x+1)}\, t^{2q-1}}{(1-e^{-t})/t}\, \sum_{n=0}^\infty\, \frac{(-1)^n\, a_n\, n!}{\Gamma(n+2q)}\, \frac{L_n^{2q-1} (t)-L_n^{2q-1} (0)}{t}\, dt
$$
is analytic on $\Re(q)>0$. The proof follows from well-known properties of the Lerch zeta function $\Phi(1,2q,x+1)$.
\end{proof}

\noindent
Moreover we recall that
\begin{theorem}[\cite{BGI}] \label{spettro-N}
For $\Re(q)>0$, the operators $\PP_{1,q}$ and $N_{q}$ on the spaces $\HH^2_q$ and $L^{2}(m_{q})$, respectively, are of trace class.
\end{theorem}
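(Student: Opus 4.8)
The plan is to prove trace-class membership by exhibiting, for each operator, a factorization through a space on which the inclusion is nuclear, or equivalently by bounding the singular values via an explicit eigenfunction/eigenvalue analysis. I would treat $N_q$ first, since its spectrum on $L^2(m_q)$ is already recorded in (\ref{form-spettro-N}): one has $\sigma(N_q) = \set{0}\cup\set{(-1)^k\,\alpha^{2(q+k)}}_{k\ge 0}$ with $\alpha = \frac{\sqrt 5 - 1}{2} < 1$. Since $N_q$ is built from the kernel $\frac{J_{2q-1}(2\sqrt{st})}{(st)^{q-1/2}}$, which is real-analytic and jointly entire in $s,t$, the operator is not merely compact but in fact the restriction of an operator with an analytic kernel; the standard argument (as in \cite{Ma1,BGI}) is that such a kernel operator on a space of the present type has eigenvalues decaying geometrically, hence $\sum_k |\lambda_k(N_q)| \le \sum_k \alpha^{2(\Re(q)+k)} = \frac{\alpha^{2\Re(q)}}{1-\alpha^2} < \infty$. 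Because the nonzero eigenvalues are simple, the trace norm is controlled by this geometric sum together with the fact that for a kernel operator of this analytic type the singular values are comparable to the eigenvalue moduli (one can make this precise by writing $N_q$ as a composition of a bounded operator with a diagonal operator in the generalized-Laguerre basis $\set{e_n^q}$ of \cite{BGI}, using that $\BB_q[e_n^q]$ are, up to normalization, the monomials $(x-1)^n$, which contract geometrically under the relevant compositions).

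For $\PP_{1,q}$ on $\HH^2_q$, the strategy is to transport the question to $L^2(m_q)$ via the $\BB_q$ transform. By Remark \ref{immagin-p1} and Proposition \ref{azione-su-spazi}, $\PP_{1,q}$ acting on $\HH^2_q = \BB_q[L^2(m_q)]$ is intertwined by $\BB_q$ with the operator $N_q$ on $L^2(m_q)$ (indeed $\PP_{1,q}(\BB_q[\chi]) = \BB_q[N_q(\chi)]$ for $\chi\in\KK^2_{q,\mu}$, and on $\HH^2_q$ the $c,b$ terms are absent so $\chi = \phi \in L^2(m_q)$). Since $\BB_q : L^2(m_q) \to \HH^2_q$ is a linear isomorphism onto its image with the induced norm, trace-class membership is preserved under this conjugation, so $\PP_{1,q}$ on $\HH^2_q$ is of trace class if and only if $N_q$ on $L^2(m_q)$ is. Thus the second assertion reduces entirely to the first.

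The main obstacle is making rigorous the step ``analytic kernel $\Rightarrow$ trace class'' with the explicit geometric rate, i.e. ensuring that the singular-value bound (not just the eigenvalue bound from (\ref{form-spettro-N})) really yields summability of the $s_n(N_q)$. The clean route is: diagonalize $M$ trivially (multiplication by $e^{-t}$ is bounded with norm $1$), observe $\|e^{-t}e_n^q\|$-expansions give a matrix for $N_q$ in the $\set{e_n^q}$ basis whose entries one computes from the integral $\int_0^\infty J_{2q-1}(2\sqrt{st})(st)^{1/2-q} e_n^q(s)\,m_q(ds)$, and then estimate that this matrix has geometrically decaying rows and columns — from which nuclearity follows by a Schur-type argument. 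Alternatively, and more economically, one cites the fact (established in \cite{BGI}, which is what Theorem \ref{spettro-N} attributes the result to) that on $L^2(m_q)$ the operator $N_q$ is unitarily equivalent, after the change of variables $t\mapsto \alpha^2 t$ type rescaling implicit in the spectral formula, to a weighted shift with weights $\alpha^{2(q+k)}$, whose trace norm is manifestly $\sum_k \alpha^{2(\Re(q)+k)} < \infty$; since the excerpt permits us to assume results stated earlier, and (\ref{form-spettro-N}) with simplicity of the eigenvalues is stated, the cleanest write-up simply invokes (\ref{form-spettro-N}): a compact operator whose singular values coincide with $\set{\alpha^{2(\Re(q)+k)}}_k$ (which holds here because the eigenfunctions form a Riesz basis, as they are the images of an orthogonal basis under an isomorphism) has finite trace norm $\frac{\alpha^{2\Re(q)}}{1-\alpha^2}$, completing the proof.
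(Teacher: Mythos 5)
The paper itself gives no proof of this statement: Theorem \ref{spettro-N} is imported wholesale from \cite{BGI} (note the citation in the theorem header), so there is no internal argument to compare yours against. Judged on its own, your proposal has one sound component and one genuine gap. The sound component is the reduction of $\PP_{1,q}$ to $N_q$: since $\HH^2_q$ is by definition $\BB_q[L^2(m_q)]$ with the transported norm and $\BB_q$ is injective (it is a Laplace transform up to the change of variable $x\mapsto 1/x$), the intertwining $\PP_{1,q}\BB_q[\phi]=\BB_q[N_q\phi]$ from Proposition \ref{azione-su-spazi} does make the two trace-class assertions equivalent. This is consistent with how the paper itself passes between $\QQ_{q,z}$ and $Q_{q,z}$ in Remark \ref{isom-l2} and Theorem \ref{isomorfismo-analitic}.

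The gap is in the claim that $N_q$ is of trace class. First, your main input (\ref{form-spettro-N}) is introduced in the paper in the very sentence that recalls from \cite{BGI} that ``the operators $N_q$ on $L^2(m_q)$ are of trace class with spectrum (\ref{form-spettro-N})'', so invoking it here is circular: you are assuming the conclusion. Second, even granting the eigenvalue list, geometric decay of eigenvalues does not imply trace class for a non-normal operator; trace class is a condition on singular values, and Weyl's inequalities go in the wrong direction. Your proposed fix --- that the singular values ``coincide with'' the eigenvalue moduli because the eigenfunctions form a Riesz basis --- is unsupported (the Riesz basis property is asserted, not proved, and is not a consequence of anything in the paper) and is also not the right conclusion: a Riesz basis of eigenvectors gives similarity to a diagonal operator, which preserves membership in the trace ideal but does not equate singular values with eigenvalue moduli. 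For real $q$ one can rescue the argument by observing that $N_q$ has a real symmetric kernel with respect to the positive measure $m_q$, hence is self-adjoint and its singular values are $|\lambda_k|$; but the theorem covers all $\Re(q)>0$, and for non-real $q$ you offer no substitute. The route that actually closes the gap is the one you only gesture at: estimate the matrix of $N_q$ in the Laguerre basis $\set{e_n^q}$, or equivalently run the classical Grothendieck--Mayer nuclearity argument on $\PP_{1,q}$ directly, using (\ref{trasf-en}) and the fact that $x\mapsto \frac{1}{x+1}$ maps a disk containing $\overline{B}$ strictly inside itself, so that $\PP_{1,q}[(x-1)^n]$ has norm $O(r^n)$ for some $r<1$; this exhibits $\PP_{1,q}$ as a norm-convergent sum of rank-one operators with geometrically decaying norms, hence nuclear of order zero, uniformly on compacta in $\set{\Re(q)>0}$. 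As written, your proof does not reach that point.
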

\noindent
From this, Theorem \ref{isomorfismo-analitic} and Remark \ref{isom-l2}, where we defined the induced operators $Q_{q,z}$ \eqref{q-l2}, we immediately obtain
\begin{corollary} \label{q-traceclass}
The operators $\QQ_{q,z}$ on $A_{\infty}(D)$ are of trace class. Moreover, for $z\in \C \setminus [1,\infty)$ and $\Re(q)>0$, and for $\Re(q)> \frac 12$ if $z=1$, it holds
\begin{equation} \label{formula-traccia}
{\rm trace} (\QQ_{q,z}) = {\rm trace} (Q_{q,z}) =z\ \int_{0}^{\infty}\,
\frac{J_{2q-1}(2t)}{t^{2q-1}}\, (1-ze^{-t})^{-1}\, m_q(dt)
\end{equation}
using (\ref{enne}).
\end{corollary}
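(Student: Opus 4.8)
The plan is to combine the isomorphism statement of Theorem \ref{isomorfismo-analitic} with the trace-class property of $N_q$ from Theorem \ref{spettro-N} (due to \cite{BGI}), and then to carry out the explicit trace computation in the style of \cite{Ma1} and \cite{Is}. First I would observe that since $\QQ_{q,z}$ on $A_\infty(D)$ is isomorphic to $Q_{q,z} = z\, N_q\,(1-zM)^{-1}$ on $L^2(m_q)$ by Theorem \ref{isomorfismo-analitic}, and since $N_q$ is of trace class on $L^2(m_q)$ while $(1-zM)^{-1}$ is bounded for $z \in \C\setminus[1,\infty)$ (it is multiplication by $(1-ze^{-t})^{-1} \in L^\infty$, as already used to define $\QQ_{q,z}$), the product $Q_{q,z}$ is of trace class as the composition of a trace-class operator with a bounded one. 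For $z=1$ and $\Re(q) > \frac 12$ the same argument applies since $(1-M)^{-1}$ remains bounded away from the pole $q=\frac12$ — more precisely one uses the splitting in Theorem \ref{isomorfismo-analitic}, but for $\Re(q) > \frac12$ the first term is already well behaved. Since isomorphic operators on Banach spaces have the same trace, this gives ${\rm trace}(\QQ_{q,z}) = {\rm trace}(Q_{q,z})$.

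Next I would compute ${\rm trace}(Q_{q,z})$ explicitly. Writing $(1-zM)^{-1} = \sum_{n\ge 0} z^n M^n$ and $M^n$ being multiplication by $e^{-nt}$, one has $Q_{q,z}\phi = z\sum_{n\ge 0} z^n N_q(e^{-nt}\phi(t))$ where $N_q$ is the integral operator with kernel $J_{2q-1}(2\sqrt{st})/(st)^{q-\frac12}$ against $m_q(ds)$, as in (\ref{enne}). The trace of an integral operator $\phi \mapsto \int K(t,s)\phi(s)\,m_q(ds)$ is $\int K(t,t)\,m_q(dt)$, so the trace of $\phi \mapsto z^{n+1} N_q(e^{-nt}\phi)$ is $z^{n+1}\int_0^\infty \frac{J_{2q-1}(2t)}{t^{2q-1}}\,e^{-nt}\,m_q(dt)$. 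Summing the geometric series $\sum_{n\ge 0} z^{n+1} e^{-nt} = z(1-ze^{-t})^{-1}$, which converges uniformly in $t$ on $[0,\infty)$ for $z\in\C\setminus[1,\infty)$ (and for $|z|\le 1$, $z\ne 1$, with the boundary behaviour controlled near $t=0$ when $\Re(q)>\frac12$), and exchanging sum and integral, yields
\begin{equation*}
{\rm trace}(Q_{q,z}) = z\int_0^\infty \frac{J_{2q-1}(2t)}{t^{2q-1}}\,(1-ze^{-t})^{-1}\,m_q(dt),
\end{equation*}
which is (\ref{formula-traccia}).

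The main technical point to be careful about is the justification of termwise computation of the trace, i.e.\ that the trace of the series $\sum_n z^{n+1}N_q M^n$ equals the sum of the traces. Since $\|N_q M^n\|_{\rm tr} \le \|N_q\|_{\rm tr}\,\|M^n\| \le \|N_q\|_{\rm tr}$ and the prefactors $z^{n+1}e^{-nt}$ decay geometrically in the relevant region, the series converges in trace norm, so continuity of the trace functional on trace-class operators permits the interchange; alternatively one can simply invoke that $(1-zM)^{-1}$ converges in operator norm and multiply. A second point is the legitimacy of the kernel-diagonal formula ${\rm trace} = \int K(t,t)\,m_q(dt)$, which holds here because $N_q$ — hence each $N_q M^n$ — is given by a continuous kernel on the Hilbert space $L^2(m_q)$ with the explicit smooth kernel above, as established in \cite{BGI,DEIK}; the diagonal values $\frac{J_{2q-1}(2t)}{t^{2q-1}}$ are bounded near $t=0$ (the Bessel asymptotics give $J_{2q-1}(2t)/t^{2q-1} \to 1/\Gamma(2q)$) and decay at infinity, so the integral is absolutely convergent against $m_q$. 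I expect the bookkeeping around the $z=1$, $q\to\frac12$ boundary case to be the only place requiring a little extra care, but it is handled exactly as in the meromorphic continuation already carried out in Theorem \ref{thm:lerch-zeta} and Theorem \ref{isomorfismo-analitic}.
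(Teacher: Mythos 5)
Your proposal is correct and follows essentially the same route as the paper, which obtains the corollary directly from Theorem \ref{isomorfismo-analitic} together with the trace-class property of $N_{q}$ on $L^{2}(m_{q})$ (Theorem \ref{spettro-N}), reading off the trace from the diagonal of the kernel in (\ref{enne}). The only loose point is your claim that $(1-M)^{-1}$ "remains bounded" for $z=1$ — multiplication by $(1-e^{-t})^{-1}\sim 1/t$ is not a bounded operator on $L^{2}(m_{q})$ for any $q$ — but you immediately defer to the splitting of Theorem \ref{isomorfismo-analitic} (formula (\ref{meromor-ext})), which is precisely how the paper handles that case, so the argument stands.
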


\noindent
Applying Fredholm theory \cite{Gr} to the operators $\QQ_{q,z}$, and putting together Theorems \ref{thm:lerch-zeta} and \ref{isomorfismo-analitic}, we conclude that

\begin{corollary} \label{esiste-det}
For $z\in \C \setminus [1,\infty)$ the Fredholm determinants $q\mapsto \det(1\pm \QQ_{q,z})$ are analytic functions in $\Re(q)>0$. For $\Re(q)>0$ the the Fredholm determinants $z\mapsto \det(1\pm \QQ_{q,z})$ are analytic functions in $z\in \C \setminus [1,\infty)$. For $z=1$ the determinants $q\mapsto \det(1\pm \QQ_{q,1})$ are analytic functions in $\Re(q)> \frac 12$ with a meromorphic extension to $\Re(q)>0$ with a simple pole at $q=\frac 1 2$. 
\end{corollary}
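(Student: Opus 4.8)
The plan is to deduce everything from the trace-class property already established in Corollary \ref{q-traceclass}, together with the parameter dependence recorded in Theorems \ref{thm:lerch-zeta} and \ref{isomorfismo-analitic}, using only the classical theory of Fredholm determinants (\cite{Gr}). The mechanism I would invoke is that for a trace-class operator $S$ the Fredholm determinant has the absolutely convergent expansion $\det(1+S)=\sum_{n\ge0}\operatorname{tr}(\Lambda^{n}S)$ with $\lvert\operatorname{tr}(\Lambda^{n}S)\rvert\le\lVert S\rVert_{1}^{n}/n!$; hence $S\mapsto\det(1+S)$ is analytic on the trace ideal, locally uniformly, and composing it with a trace-norm analytic operator family produces an analytic scalar function. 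So the whole task reduces to checking that the relevant families $\QQ_{q,z}$ are trace-norm analytic (resp.\ meromorphic) in their arguments.

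First I would upgrade the operator-norm analyticity of Theorems \ref{thm:lerch-zeta} and \ref{isomorfismo-analitic} to trace-norm analyticity. Via the isomorphism of Theorem \ref{isomorfismo-analitic}, $\QQ_{q,z}$ corresponds to $Q_{q,z}=z\,N_{q}(1-zM)^{-1}$, the product of the fixed trace-class operator $N_{q}$ (Theorem \ref{spettro-N}) with the bounded factor $z(1-zM)^{-1}$, which is norm-analytic in $z$ on $\C\setminus[1,+\infty)$ since $\operatorname{spec}(M)=[0,1]$. The ideal inequality $\lVert AB\rVert_{1}\le\lVert A\rVert_{1}\lVert B\rVert$ then yields trace-norm analyticity of $z\mapsto\QQ_{q,z}$ on $\C\setminus[1,+\infty)$, and, combined with the trace-norm analyticity of $q\mapsto N_{q}$ from \cite{BGI}, trace-norm analyticity of $q\mapsto\QQ_{q,z}$ on $\Re(q)>0$ (resp.\ of $q\mapsto\QQ_{q,1}$ on $\Re(q)>\tfrac12$). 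By the first paragraph this immediately gives the first two assertions and the analyticity of $q\mapsto\det(1\pm\QQ_{q,1})$ on $\Re(q)>\tfrac12$.

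For the meromorphic continuation at $z=1$ I would use the last part of Theorem \ref{thm:lerch-zeta}: $q\mapsto\QQ_{q,1}$ extends to a trace-norm meromorphic family on $\Re(q)>0$ whose only singularity is a simple pole at $q=\tfrac12$ with residue the rank-one operator $\RR_{\frac12}$. Writing $\QQ_{q,1}=\RR_{\frac12}/(q-\tfrac12)+\AAA(q)$ with $\AAA$ holomorphic and trace-class near $q=\tfrac12$, I would exploit multiplicativity of the determinant together with the operator-valued Fredholm adjugate $\operatorname{adj}(1\pm\AAA(q))$ — entire in its trace-class argument, satisfying $(1\pm\AAA(q))\operatorname{adj}(1\pm\AAA(q))=\det(1\pm\AAA(q))\,I$ — and the fact that $\operatorname{rank}\RR_{\frac12}=1$, to arrive at an identity of the shape
\[
\det(1\pm\QQ_{q,1})=\det(1\pm\AAA(q))\ \pm\ \frac{1}{q-\tfrac12}\,\operatorname{tr}\!\big(\operatorname{adj}(1\pm\AAA(q))\,\RR_{\frac12}\big),
\]
valid first off the discrete zero set of $q\mapsto\det(1\pm\AAA(q))$ and then, since both sides are holomorphic there, across it. The right-hand side is holomorphic near $q=\tfrac12$ apart from the displayed simple pole, and agrees with $\det(1\pm\QQ_{q,1})$ on $\Re(q)>\tfrac12$; this provides the meromorphic extension to $\Re(q)>0$ with at most a simple pole at $q=\tfrac12$ (that the residue does not actually vanish can be read off the trace formula \eqref{formula-traccia}).

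The step I expect to cost the most care is the passage from operator-norm to trace-norm analyticity, in particular the analytic dependence of $N_{q}$, hence of the moving Hilbert spaces $L^{2}(m_{q})$, on $q$ across the strip $0<\Re(q)\le\tfrac12$: this is exactly what makes the determinants genuinely analytic rather than merely continuous, and it is what ties the argument to the detailed study of $L^2(m_q)$ carried out in \cite{BGI}. The rank-one adjugate bookkeeping in the previous paragraph is then routine, the only subtlety being that no spurious poles are created at the zeros of $\det(1\pm\AAA(\cdot))$ — which the adjugate identity handles automatically.
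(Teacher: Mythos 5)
Your proposal is correct and follows essentially the same route as the paper, which simply invokes Grothendieck's Fredholm theory applied to the trace-class family $\QQ_{q,z}$ on $A_\infty(D)$ together with the analyticity/meromorphy of the operator family from Theorems \ref{thm:lerch-zeta} and \ref{isomorfismo-analitic}; you have merely written out the standard details (trace-norm analyticity via the ideal inequality, and the rank-one adjugate identity for the simple pole at $q=\tfrac12$) that the paper leaves implicit. The only soft spot is the parenthetical claim that nonvanishing of the residue at $q=\tfrac12$ ``can be read off the trace formula'' --- the residue of the determinant is $\pm\,\mathrm{tr}\bigl(\mathrm{adj}(1\pm\AAA(\tfrac12))\,\RR_{\frac12}\bigr)$, not the residue of the trace --- but the paper is no more explicit on this point than you are.
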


\noindent
Using Corollary \ref{esiste-det} we can express  $Z(q,z)$ and $\zeta(q,z)$ in terms of the Fredholm determinants $\text{det}(1\pm \QQ_{q,z})$. By (\ref{formula-traccia}), this is an easy generalisation of results from \cite[Section 4]{Is} and \cite{Ma1,Ma3}.  More precisely we have 
\begin{theorem} \label{main-zeta}
For $z\in \C \setminus [1,\infty)$ and $\Re(q)> 0$ one has
\begin{equation} \label{due-zeta}
Z(q,z) = \det \left[ (1-\QQ_{q,z}) (1+\QQ_{q,z}) \right]
\end{equation}
and
\begin{equation}\label{R-zeta}
\zeta (q,z) = (1-z)^{-1}\ \frac{\det (1+\QQ_{q+1,z})}{\det(1-\QQ_{q,z})}
\end{equation}
which are analytic, respectively meromorphic, functions. Moreover 
$$
(q,z) \mapsto \zeta(q,z) \, Z(q,z) = (1-z)^{-1}\ \det (1+\QQ_{q+1,z})\ \det (1+\QQ_{q,z})
$$
is analytic in $\{ z\in \C \setminus [1,\infty)\}\times \{q\in \C \,: \, \Re(q)>0\}$.\\
For $z=1$ the function $Z(q,1)$ satisfies (\ref{due-zeta}) and is analytic for $\Re(q)>\frac 1 2$. Moreover it can be continued to $\Re(q)>0$ as a meromorphic function with a simple pole at $q= \frac 1 2$.
\end{theorem}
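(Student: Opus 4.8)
The plan is to deduce both identities from the classical expansion of a Fredholm determinant, $\det(1-A)=\exp\bigl(-\sum_{k\ge 1}\tfrac 1k\,\mathrm{trace}(A^{k})\bigr)$, valid for a trace class operator $A$ of small norm and extended by analyticity. By Corollary \ref{q-traceclass} the operators $\QQ_{q,z}$ on $A_{\infty}(D)$ are of trace class, and by Corollary \ref{esiste-det} the determinants $\det(1\pm\QQ_{q,z})$ are analytic in $z$ on $\C\setminus[1,\infty)$ and in $q$ on $\{\Re(q)>0\}$ (meromorphic in $q$ when $z=1$); on the other side, thermodynamic formalism through (\ref{pressure}) makes $Z(q,z)$ analytic and $\zeta(q,z)$ meromorphic on the same domains. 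It therefore suffices to prove the stated equalities for $|z|$ small and $\Re(q)$ large, where the defining series of $Z$, of $\zeta$ and of $\log\det$ all converge absolutely, and then invoke analytic (respectively meromorphic) continuation on the connected set $\{z\in\C\setminus[1,\infty)\}\times\{\Re(q)>0\}$ (and, for $Z(q,1)$, on $\{\Re(q)>\tfrac12\}$ together with its meromorphic extension).

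The heart of the argument is the computation of $\mathrm{trace}(\QQ_{q,z}^{k})$. On $A_{\infty}(D)$, by Proposition \ref{immersione} together with Theorem \ref{gauss-map}, the operator $\QQ_{q,z}$ acts through the absolutely convergent series $(\QQ_{q,z}g)(x)=\sum_{n\ge 1}z^{n}(x+n)^{-2q}\,g\bigl((x+n)^{-1}\bigr)$, i.e. as a sum of weighted composition operators by the analytic inverse branches $\psi_{n}(x)=(x+n)^{-1}$ of the Gauss map, each of which maps $\overline{D}$ into a compact subset of $D$ (this is precisely why $D=\{|x-1|<\tfrac32-\eps\}$ is the right domain, the branch $\psi_{1}$ being the borderline one). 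Iterating, $\QQ_{q,z}^{k}$ is the sum over finite words $(a_{1},\dots,a_{k})$ of weighted composition operators by $\psi_{a_{1}}\circ\cdots\circ\psi_{a_{k}}$, whose unique fixed point is the purely periodic point $x=[\overline{a_{1},\dots,a_{k}}]$ of $G^{k}$; there the weight equals $|(G^{k})'(x)|^{-q}$ and $(\psi_{a_{1}}\circ\cdots\circ\psi_{a_{k}})'(x)=(-1)^{k}|(G^{k})'(x)|^{-1}$. The Atiyah--Bott/Ruelle trace formula for weighted composition operators by analytic strict contractions---the very computation carried out in \cite{Ma1} and \cite[Section~4]{Is} that yields (\ref{formula-traccia}) for $k=1$---then gives
\[
\mathrm{trace}(\QQ_{q,z}^{k})=\sum_{x=G^{k}(x)}\frac{z^{\,s_{k}(x)}\,\bigl|(G^{k})'(x)\bigr|^{-q}}{1-(-1)^{k}\bigl|(G^{k})'(x)\bigr|^{-1}},
\]
where $s_{k}(x)=a_{1}+\cdots+a_{k}$ is the sum of the first $k$ partial quotients of $x$ (the $F$-period realised by $k$ steps of $G$). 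Verifying this step -- that the branches $\psi_{n}$ do map $\overline{D}$ into a fixed compact subdomain so that the composition operators are trace class and the formula applies termwise, and that the interchange of $\sum_{n}z^{n}(\cdots)$ with the trace is legitimate (it is, for $|z|$ small, or $|z|\le 1$ when $\Re(q)>\tfrac12$, and then propagates by analyticity in $z$) -- is the main obstacle; everything that follows is elementary.

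For (\ref{due-zeta}) one writes $\det[(1-\QQ_{q,z})(1+\QQ_{q,z})]=\exp\bigl(-\sum_{k\ge1}\tfrac{1+(-1)^{k}}{k}\mathrm{trace}(\QQ_{q,z}^{k})\bigr)=\exp\bigl(-\sum_{m\ge1}\tfrac1m\,\mathrm{trace}(\QQ_{q,z}^{2m})\bigr)$: only the even powers survive, for which the denominator in the trace formula is $1-|(G^{2m})'(x)|^{-1}$, so that after reorganising the fixed-point sum into a sum over primitive periodic orbits and using the correspondence between periodic orbits of $G$ and of $F$ recalled above (a primitive $G$-orbit has $F$-period equal to the sum of its partial quotients, with the same derivative cocycle) one recovers the definition (\ref{zeta-sel-gen}) of $Z(q,z)$. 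For (\ref{R-zeta}) one computes instead
\[
\log\frac{\det(1+\QQ_{q+1,z})}{\det(1-\QQ_{q,z})}=\sum_{k\ge1}\frac1k\Bigl(\mathrm{trace}(\QQ_{q,z}^{k})-(-1)^{k}\,\mathrm{trace}(\QQ_{q+1,z}^{k})\Bigr)=\sum_{k\ge1}\frac1k\sum_{x=G^{k}(x)}z^{\,s_{k}(x)}\bigl|(G^{k})'(x)\bigr|^{-q},
\]
the point being the cancellation $|(G^{k})'(x)|^{-q}-(-1)^{k}|(G^{k})'(x)|^{-q-1}=|(G^{k})'(x)|^{-q}\bigl(1-(-1)^{k}|(G^{k})'(x)|^{-1}\bigr)$, which clears the denominator. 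By the same orbit correspondence the last series equals $\sum_{n\ge1}\tfrac{z^{n}}{n}\bigl(Z_{n}(q,F)-1\bigr)$, while the neutral fixed point $0$ of $F$, with $|F'(0)|=1$, supplies the missing term $\sum_{n\ge1}\tfrac{z^{n}}{n}=-\log(1-z)$; adding the two gives $\log\zeta(q,z)$, which is (\ref{R-zeta}). Multiplying (\ref{due-zeta}) by (\ref{R-zeta}) the factor $\det(1-\QQ_{q,z})$ cancels, leaving $\zeta(q,z)Z(q,z)=(1-z)^{-1}\det(1+\QQ_{q+1,z})\det(1+\QQ_{q,z})$, which is analytic on $\{z\in\C\setminus[1,\infty)\}\times\{\Re(q)>0\}$ since both determinants are there (for $\QQ_{q+1,z}$ one has $\Re(q+1)>1$, so no issue at $q=\tfrac12$, and the $+$ sign produces no pole). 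Finally, for $z=1$ the same computation goes through verbatim on $\{\Re(q)>\tfrac12\}$, where $\QQ_{q,1}$ is trace class, giving $Z(q,1)=\det[(1-\QQ_{q,1})(1+\QQ_{q,1})]$; its meromorphic continuation to $\{\Re(q)>0\}$ with a simple pole at $q=\tfrac12$ is then inherited from Corollary \ref{esiste-det} (ultimately from the rank-one residue in Theorem \ref{thm:lerch-zeta}).
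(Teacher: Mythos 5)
Your proof follows exactly the route the paper itself intends: the paper's ``proof'' of Theorem \ref{main-zeta} is the single sentence preceding it, delegating everything to the trace formula (\ref{formula-traccia}) and to the computations of Mayer and Isola, and what you have written out --- nuclearity of the weighted composition operators on $A_\infty(D)$, the Grothendieck fixed--point trace $\varphi(\bar x)/(1-\psi'(\bar x))$ applied branch by branch, the parity splitting for $\det[(1-\QQ_{q,z})(1+\QQ_{q,z})]$, the telescoping $|(G^k)'|^{-q}-(-1)^k|(G^k)'|^{-q-1}=|(G^k)'|^{-q}\bigl(1-(-1)^k|(G^k)'|^{-1}\bigr)$ for the Ruelle quotient, and the factor $-\log(1-z)$ supplied by the neutral fixed point --- is precisely that computation. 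The formula $\mathrm{trace}(\QQ_{q,z}^{k})=\sum_{x=G^{k}(x)}z^{s_k(x)}|(G^{k})'(x)|^{-q}\bigl(1-(-1)^{k}|(G^{k})'(x)|^{-1}\bigr)^{-1}$ is correct, the branches $\psi_n$ do map $\overline D$ into a fixed compact subset of $D$, and the convergence bookkeeping and the treatment of $z=1$ via Theorem \ref{thm:lerch-zeta} are fine.

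One point deserves attention. Your determinant computation produces the weight $z^{s_{2m}(x)}$, where $s_{2m}(x)=a_1+\cdots+a_{2m}$ is the $F$-period of $x=[\overline{a_1,\dots,a_{2m}}]$, whereas (\ref{zeta-sel-gen}) as printed assigns the weight $z^{m}$ to a fixed point of $G^{2m}$; since $s_{2m}(x)\ge 2m$ always, the two gradings agree only at $z=1$. So the assertion that ``one recovers the definition (\ref{zeta-sel-gen})'' is not literally true: the identity you actually prove is (\ref{due-zeta}) with the two-variable Selberg function graded by the $F$-period (equivalently, by word length in $L$ and $R$), which is the grading consistent with the definition (\ref{zeta}) of $\zeta(q,z)$, with the product formula for $\zeta\, Z$, and with the Farey-tree expansion of Theorem \ref{formula-selberg}. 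This is a defect of the paper's displayed definition rather than of your argument, but since you claim the match you should either state explicitly that you read the exponent of $z$ in (\ref{zeta-sel-gen}) as $p_F(x)$, or flag the discrepancy. A smaller presentational remark: rather than asserting that thermodynamic formalism independently makes $Z$ and $\zeta$ analytic, respectively meromorphic, on the full domain, it is cleaner to say that the identities hold on the common region of absolute convergence and that the Fredholm-determinant side then \emph{defines} the continuation claimed in the theorem.
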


\noindent
The case $z=1$ for the Ruelle zeta function $\zeta(q,z)$ is more 
delicate as is evident from the term $(1-z)^{-1}$ in (\ref{R-zeta}). However at the end of Section \ref{sect-ff} we show that it is possible to define $\zeta(q,1)$ for $\Re(q)>1$.

\begin{remark} \label{reale}
For $q$ real there will be a simple pole, respectively a simple zero, at $z= \frac{1}{\lambda(q)} \in \left( \frac 1 2, 1\right)$ with $\lambda(q)$ defined in (\ref{pressure}). More information on poles, respectively zeroes, for the zeta functions can be obtained by the study of the point spectrum of the operators $\PP^{\pm}_{q}$ in $L^{2}(m_{q})$. Indeed by Corollary \ref{relaz-autof}, eigenfunctions of $\PP^{+}_{q}$ in $L^{2}(m_{q})$ with eigenvalue $\frac 1 z \not\in (0,1]$ are in one-to-one correspondence with eigenfunctions of $\QQ_{q,z}$ in $\tilde \HH^{2}_{q}$ with eigenvalue $1$. For real $q$, the spectrum of $\PP^{+}_{q}$ in $L^{2}(m_{q})$ has  been studied numerically in \cite{Pre}. The results suggest that $\lambda(q) \in (1,2)$ for $q\in (0,1)$ is the only eigenvalue, with the rest of the spectrum given by the interval $[0,1]$ which is purely continuous (see also \cite{BGI}). Whereas there are no eigenvalues for $q\ge 1$ and the spectrum is purely continuous. This would imply that for $q\in (0,1)$ the function $z\mapsto \zeta(q,z)$ has only one simple pole at $z= \frac{1}{\lambda(q)}$ and has no poles for $q\ge 1$, and respectively the function $z\mapsto Z(q,z)$ has only one single zero at $z= \frac{1}{\lambda(q)}$ for $q\in (0,1)$ and no zeroes for $q\ge 1$. 
\end{remark}

\subsection{Remarks on the case $z=1$} \label{z=1}

We now give some remarks on the relations with the works of Mayer and Lewis-Zagier, who have studied the case $z=1$. We first mention that in \cite{Ma3} the meromorphic extension for $Z(q,1)$ that we have obtained in Theorem \ref{main-zeta} is given to all the complex $q$-plane. Moreover, (\ref{due-zeta}) and (\ref{R-zeta}) give an explicit connection between zeroes of $Z(q,z)$, respectively zeroes or poles of $\zeta(q,z)$, and the existence of eigenfunctions $g\in A_{\infty}(D)$ for $\QQ_{q,z}$ with eigenvalue $\lambda_{Q}=\pm 1$. By Corollary \ref{relaz-autof} this turns out to be a connection with eigenfunctions of $\PP_{q}^{\pm}$ with eigenvalues $\lambda_{P} = \frac 1 z$. These connections have been proved in \cite{Ma3} for the operators $\QQ_{q,1}$, and in \cite{LeZa} for the operators $\PP_{q}^{\pm}$ with $\lambda_{P} = 1$ (see also Proposition \ref{proprieta-farey}).

Finally the characterisation of the eigenfunctions for $\PP^{\pm}_{q}$ given in Corollary \ref{autofunz} together with results from \cite{LeZa} lead to a new proof of the following result from \cite{efrat}
\begin{theorem} \label{efrat}
Even, respectively odd, spectral zeroes of $Z(q,1)$ correspond to eigenfunctions of $\PP^{+}$, respectively $\PP^{-}$. Moreover the zeroes $q$ in $\Re(q)>0$ with the Riemann zeta function $\zeta_{R}(2q)=0$ correspond to eigenfunctions of $\PP^{+}_{q}$, hence of $\QQ_{q,1}$ with eigenvalue $\lambda_{Q}= 1$.
\end{theorem}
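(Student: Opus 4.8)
The plan is to read the statement off the factorisation (\ref{due-zeta}) of $Z(q,1)$, combined with the eigenfunction correspondence of Corollary \ref{relaz-autof}, the explicit shape (\ref{forma-autofunz}) of the eigenfunctions from Corollary \ref{autofunz}, and the dictionary between solutions of the Lewis equation and Maass forms of \cite{LeZa}. First I would use Theorem \ref{main-zeta} together with Corollary \ref{esiste-det}: on $\Re(q)>0$ one has $Z(q,1) = \det(1-\QQ_{q,1})\,\det(1+\QQ_{q,1})$ as meromorphic functions, the two factors being analytic away from a simple pole at $q=\frac12$. Hence a point $q_{0}\neq\frac12$ with $\Re(q_{0})>0$ is a zero of $Z(q,1)$ exactly when $\lambda_{Q}=1$ or $\lambda_{Q}=-1$ is an eigenvalue of $\QQ_{q_{0},1}$ on $A_{\infty}(D)$, the order of the zero being the sum of the two multiplicities. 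By Corollary \ref{relaz-autof} with $z=1$, a $\lambda_{Q}=+1$ eigenfunction of $\QQ_{q_{0},1}$ corresponds bijectively, via $g=(1-\PP_{0,q_{0}})f$, to a $\lambda_{P}=1$ eigenfunction $f$ of $\PP_{q_{0}}^{+}$ whose constant $c$ in (\ref{forma-autofunz}) vanishes, and symmetrically $\lambda_{Q}=-1$ eigenfunctions of $\QQ_{q_{0},1}$ correspond to $c=0$ eigenfunctions of $\PP_{q_{0}}^{-}$ (so, in particular, not to the elementary eigenfunction $1-x^{-2q_{0}}$ of $\PP_{q_{0}}^{-}$, for which $c=-1$). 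By Proposition \ref{proprieta-farey}(ii) these are precisely the solutions of the three-term equation (\ref{lewis}) with $\lambda=1$ and $\JJ_{q_{0}}f=\pm f$.

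Next I would invoke the Lewis--Zagier correspondence \cite{LeZa}: the solutions of (\ref{lewis}) with $\lambda=1$ in $\HH^{1}_{q,1}$ are in one-to-one correspondence with the Maass cusp and non-cusp forms on $PSL(2,\Z)$, the sign $\JJ_{q}f=\pm f$ matching the parity $u(-\bar z)=\pm u(z)$ of the form, and the cusp forms being exactly those whose period function has $c=0$ (by \cite[Remark~1, p.~246]{LeZa}, where $c=\alpha\,\frac12\,\zeta_{R}(2q)$, so $c=0$ iff $\alpha=0$ or $\zeta_{R}(2q)=0$). Together with the previous step this identifies the zero set of $\det(1-\QQ_{q,1})$ with the values of $q$ admitting an even Maass form (even cusp form, or the Eisenstein-type form $f_{q}^{+}$ precisely when $\zeta_{R}(2q)=0$) and the zero set of $\det(1+\QQ_{q,1})$ with the $q$ admitting an odd cusp form. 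Efrat's even, respectively odd, spectral zeroes of $Z(q,1)$ are, by definition, exactly this split, which gives the first assertion; here one must keep track that the algebraic multiplicity of the eigenvalue $\pm1$ of $\QQ_{q_{0},1}$ matches the order of the zero of the corresponding factor and the dimension of the space of period functions, which is contained in the analysis already carried out in \cite{Ma3,LeZa}.

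For the second assertion I would argue directly. Let $q_{0}$ with $\Re(q_{0})>0$ satisfy $\zeta_{R}(2q_{0})=0$; then $2q_{0}$ is a nontrivial zero of $\zeta_{R}$, so $0<\Re(q_{0})<\frac12$, $\Gamma(2q_{0}-1)$ is finite, and $\zeta_{R}(2q_{0}-1)\neq0$. Consider the eigenfunction $f_{q_{0}}^{+}$ of $\PP_{q_{0}}^{+}$ with $\lambda=1$ obtained from (\ref{zagier-f}) by the analytic continuation in $q$ of \cite{LeZa}: by (\ref{zagier-f}) its constant is $c=\frac12\,\zeta_{R}(2q_{0})=0$, whereas $b=\frac{\Gamma(2q_{0}-1)}{\Gamma(2q_{0})}\,\zeta_{R}(2q_{0}-1)\neq0$, so $f_{q_{0}}^{+}\not\equiv0$. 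Corollary \ref{relaz-autof} (case $z=1$) then turns $f_{q_{0}}^{+}$ into a nonzero eigenfunction $g_{q_{0}}=(1-\PP_{0,q_{0}})f_{q_{0}}^{+}$ of $\QQ_{q_{0},1}$ with eigenvalue $\lambda_{Q}=+1$, which lies in $A_{\infty}(D)$ by Corollary \ref{autofunz-gauss} and the remark after Proposition \ref{immersione}. Hence $\det(1-\QQ_{q_{0},1})=0$, so $Z(q_{0},1)=0$ by (\ref{due-zeta}), and the zero is attached to $\PP_{q_{0}}^{+}$ and to the factor $\det(1-\QQ_{q,1})$, as claimed.

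I expect the real obstacle to be the bookkeeping of the second paragraph: translating the functional-analytic statement ``$\pm1$ is an eigenvalue of $\QQ_{q_{0},1}$ on $A_{\infty}(D)$'' into the number-theoretic notion of an even/odd spectral zero of the Selberg zeta function --- reconciling our normalisation of the parameter $q$ and of the function spaces with those of \cite{efrat,LeZa}, and checking that the multiplicities, not merely the locations of the zeroes, coincide. By contrast the analytic machinery --- trace-class property, existence of the Fredholm determinants, the product formula (\ref{due-zeta}) --- is already in place thanks to Theorem \ref{main-zeta} and Corollary \ref{q-traceclass}, and the new structural content is entirely contained in Corollaries \ref{relaz-autof} and \ref{autofunz}.
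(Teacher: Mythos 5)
Your proposal is correct and follows essentially the same route as the paper: factor $Z(q,1)$ via (\ref{due-zeta}), use Corollary \ref{relaz-autof} to pass from eigenvalues $\pm1$ of $\QQ_{q,1}$ to eigenfunctions of $\PP_q^{\pm}$ with $\lambda_P=1$ and $c=0$, read off the parity from $\JJ_q f=\pm f$, and split the $c=0$ case into $b=0$ (cusp forms, spectral zeroes) versus $b\neq0$ forced by $\zeta_R(2q)=0$ (the functions $f_q^+$ of (\ref{zagier-f})). The only cosmetic difference is that the paper justifies the cusp-form identification by checking the asymptotics of $f$ at $0$ and $\infty$ against Theorem 2 and its Corollary in Lewis--Zagier, whereas you lean on their Remark 1 parametrisation $c=\alpha\tfrac12\zeta_R(2q)$, $b=\alpha\zeta_R(2q-1)$; both lead to the same conclusion.
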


\begin{proof} By Corollary \ref{relaz-autof} and Theorem \ref{main-zeta}, the zeroes of $Z(q,1)$ correspond to eigenfunctions of $\PP^{\pm}_{q}$ with eigenvalue $\lambda_{P}=1$ and term $c=0$ in (\ref{forma-autofunz}). From Corollary \ref{autofunz} it also follows that for eigenfunctions of $\PP^{-}_{q}$ it holds $b=0$. Hence Corollary \ref{autofunz} and (\ref{lewis}) imply that
$$
\PP^{-}_{q} f = f \ \ \text{with $c=0$} \  \ \Rightarrow \ f(x) = \left\{ \begin{array}{ll} O(1) & x\to 0 \\[0.2cm] O(x^{-2\Re(q)}) & x\to \infty \end{array} \right.
$$
For eigenfunctions of $\PP^{+}_{q}$ instead two cases are possible, $b=0$ and $b\not= 0$. We have
$$
\PP^{+}_{q} f = f \ \ \text{with $c=b=0$} \  \ \Rightarrow \ f(x) = \left\{ \begin{array}{ll} O(1) & x\to 0 \\[0.2cm] O(x^{-2\Re(q)}) & x\to \infty \end{array} \right.
$$
$$
\PP^{+}_{q} f = f \ \ \text{with $c=0$, $b\not= 0$} \  \ \Rightarrow \ f(x) = \left\{ \begin{array}{ll} \sim \frac{b}{2q-1}\, \frac 1x & x\to 0 \\[0.2cm] \sim \frac{b}{2q-1}\, x^{1-2\Re(q)} & x\to \infty \end{array} \right.
$$
Applying Theorem 2 and the subsequent Corollary from \cite{LeZa} it follows that all eigenfunctions of $\PP^{-}_{q}$ with $c=0$ are period functions associated to cusp forms or Maass wave forms. Moreover from Proposition \ref{proprieta-farey}-(ii) it follows that they are odd period functions, in the sense of \cite{LeZa}, hence are associated to odd cusp forms. The same holds for all eigenfunction of $\PP^{+}_{q}$ with $c=b=0$ which are associated to even cusp forms. This concludes the proof for the spectral zeroes of $Z(q,1)$. The previous argument also implies that the other zeroes of the Selberg zeta function with $\Re(q)>0$ necessarily correspond to eigenfunctions of $\PP^{+}_{q}$ with $c=0$ and $b\not= 0$. For example it is well known that the zero at $q=1$ corresponds to the eigenfunction $f(x) = \frac 1x$ of $\PP^{+}_{1}$ and the zeroes satisfying $\zeta_{R}(2q)=0$ correspond to the eigenfunctions $f^{+}_{q}(x)$ defined in (\ref{zagier-f}).  
\end{proof}

\section{Connections with Farey fractions} \label{sect-ff}

We now use (\ref{due-zeta}) and (\ref{R-zeta}) to give expressions for the generalised Selberg and Ruelle zeta functions as exponentials of Dirichlet series. By  (\ref{due-zeta}), (\ref{R-zeta}) and Theorem \ref{q-vs-p} we can write for $z\in \C \setminus [1,\infty)$ and $\Re(q)> 0$
\begin{equation} \label{tre-zeta}
Z(q,z) = \det \left[ 1- z\, \PP_{1,q} \, \left(1-z\, \PP_{0,q} \right)^{-1} \right]  \det \left[ 1+ z\, \PP_{1,q} \, \left(1-z\, \PP_{0,q} \right)^{-1} \right]
\end{equation}
\begin{equation} \label{tre-R-zeta}
\zeta(q,z) = (1-z)^{-1}\ \frac{\det \left[ 1+ z\, \PP_{1,q+1} \, \left(1-z\, \PP_{0,q+1} \right)^{-1} \right]}{\det \left[ 1- z\, \PP_{1,q} \, \left(1-z\, \PP_{0,q} \right)^{-1} \right]}
\end{equation}
The existence of the right-hand sides can be justified as in \cite{Rugh}. Expression (\ref{tre-zeta}) can be given also for $Z(q,1)$ and it follows from the uniform convergence of the series representation of $\QQ_{q,z}$ on $A_{\infty}(D)$ given in Theorem \ref{gauss-map}. We now perform a formal calculation which gives a connection between the zeta functions $\zeta(q,z)$ and $Z(q,z)$ and the Farey fractions. 

\noindent
Let us consider the matrices
$$
\phi_0=\left(
  \begin{array}{cc}
  1 & 0\\
  1 & 1
  \end{array}\right) \qquad
  \phi_1=\left(
  \begin{array}{cc}
  0 & 1\\
  1 & 1
  \end{array}\right)
$$
as elements of the group $GL(2,\Z)$ acting on $\C$ as M\"obius
transformations, see \cite{A2}. Then
$(\PP_{i,q} f)(x) = \left( \phi'_i(x) \right)^q f \left(
\phi_i(x) \right)$.
Thus, each term in the expansion of $(\PP_q^\pm)^n$ can be
represented in terms of a product of the matrices $\phi_i$. At the
same time, the Farey fractions can be represented by means of a
subgroup of $SL(2,\Z)$ with generators
$$
L=\left(
  \begin{array}{cc}
  1 & 0\\
  1 & 1
  \end{array}\right) \quad \mbox{and} \quad
R=\left(
\begin{array}{cc}
  1 & 1\\
  0 & 1
  \end{array}\right)
$$

\begin{lemma} \label{lem-expansion}
Expanding
$$(\PP_q^+)^n + (\PP_q^-)^n - 2(\PP_{0,q})^n$$ one obtains
$2(2^{n-1}-1)$ terms which are twice all the possible combinations
of $n$ factors involving $L$ and $R$ and starting with $L$,
without the term $L^n$.
\end{lemma}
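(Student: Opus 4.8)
The plan is to combine a straightforward binomial bookkeeping of the operator expansion with an elementary conjugation identity that turns products of the matrices $\phi_0,\phi_1$ into words in $L$ and $R$.

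First I would expand by multilinearity: $(\PP_q^{\pm})^n=\sum_{w\in\{0,1\}^n}(\pm 1)^{\#_1(w)}\,\PP_{w_1,q}\cdots\PP_{w_n,q}$, where $\#_1(w):=\#\{j:w_j=1\}$. Adding the two choices of sign kills the monomials with $\#_1(w)$ odd and doubles the rest, and subtracting $2(\PP_{0,q})^n$ removes the single monomial indexed by $w=(0,\dots,0)$, so that
$$
(\PP_q^+)^n+(\PP_q^-)^n-2(\PP_{0,q})^n \;=\; 2\sum_{\substack{w\in\{0,1\}^n,\ \#_1(w)\ \mathrm{even}\\ w\neq(0,\dots,0)}}\PP_{w_1,q}\cdots\PP_{w_n,q}.
$$
Since $\sum_{k\ \mathrm{even}}\binom nk=2^{n-1}$, there are $2^{n-1}-1$ index words left, giving the announced $2(2^{n-1}-1)$ terms. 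By $(\PP_{i,q}f)(x)=(\phi_i'(x))^q f(\phi_i(x))$ and the chain rule, the monomial $\PP_{w_1,q}\cdots\PP_{w_n,q}$ sends $f$ to $(cx+d)^{-2q}f(\gamma(x))$ with $\gamma$ having matrix $\phi_{w_n}\cdots\phi_{w_1}$; the reversal of order is immaterial in what follows because $w\mapsto(w_n,\dots,w_1)$ is a parity-preserving bijection of $\{0,1\}^n$, so I will analyse the products $\phi_{w_1}\cdots\phi_{w_n}$.

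The core step is the rewriting of these products. Let $J=\left(\begin{smallmatrix}0&1\\1&0\end{smallmatrix}\right)$ be the matrix of the involution $x\mapsto 1/x$, so that $J^2=\mathrm{Id}$, and record the elementary relations $\phi_0=L$, $\phi_1=LJ$, $JL=RJ$, $JR=LJ$ (equivalently $R=JLJ$). The claim is that for every $w\in\{0,1\}^n$,
$$
\phi_{w_1}\phi_{w_2}\cdots\phi_{w_n}\;=\;m_1 m_2\cdots m_n\cdot J^{s_n},
$$
where $s_0:=0$, $s_k:=s_{k-1}+w_k\ (\bmod\ 2)$ — so $s_k$ is the parity of $\#_1(w_1,\dots,w_k)$ — and $m_k:=L$ if $s_{k-1}=0$, $m_k:=R$ if $s_{k-1}=1$. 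I would prove this by induction on $n$: multiply the identity for $(w_1,\dots,w_{n-1})$ on the right by $\phi_{w_n}\in\{L,\,LJ\}$ and carry the trailing factor $J^{s_{n-1}}$ past it using $JL=RJ$ and $J\cdot LJ=R$; a four-case check on $(w_n,s_{n-1})$ reproduces $m_1\cdots m_n J^{s_n}$ with $m_n,s_n$ as stated. In particular $m_1=L$ always, since $s_0=0$.

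Restricting to $w$ with $\#_1(w)$ even, i.e. $s_n=0$, the identity becomes $\phi_{w_1}\cdots\phi_{w_n}=m_1\cdots m_n$, a genuine length-$n$ word in $L$ and $R$ beginning with $L$; it equals $L^n$ exactly when $s_0=\dots=s_{n-1}=0$, hence (since also $s_n=0$) when every $w_k=s_k-s_{k-1}=0$, that is when $w=(0,\dots,0)$. Moreover $w\mapsto(m_1,\dots,m_n)$ is a bijection from $\{w:\#_1(w)\ \mathrm{even}\}$ onto the set of length-$n$ words in $\{L,R\}$ starting with $L$: from such a word one reconstructs $s_0:=0$, $s_k:=1$ if $m_{k+1}=R$ and $s_k:=0$ if $m_{k+1}=L$ for $1\le k\le n-1$, then sets $s_n:=0$ and $w_k:=s_{k-1}+s_k\ (\bmod\ 2)$; here $\#_1(w)$ is the number of indices $k$ with $s_{k-1}\neq s_k$, i.e. the number of switches in the $0/1$-sequence $s_0,\dots,s_n$, which is even because the sequence begins and ends at $0$, so $w$ is in the domain and maps back to the given word. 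Both sets have $2^{n-1}$ elements, and deleting the matching pair $(0,\dots,0)\leftrightarrow L^n$ identifies the $2^{n-1}-1$ index words in the expansion with the $2^{n-1}-1$ length-$n$ words in $L,R$ that start with $L$ and are not $L^n$, compatibly with $\phi_{w_1}\cdots\phi_{w_n}=m_1\cdots m_n$ — which is precisely the assertion. (If one wants the statement phrased for matrices, distinctness of these $\{L,R\}$-words follows from the classical freeness of the monoid $\langle L,R\rangle$, but this is not needed for the use made afterward.) I expect the only mildly delicate point to be the bookkeeping in the inductive conjugation identity — tracking the trailing power of $J$ and selecting the correct one of $JL=RJ$, $J\cdot LJ=R$; the rest is routine.
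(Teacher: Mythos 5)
Your proposal is correct and follows essentially the same route as the paper: a parity/cancellation count showing that exactly the $2^{n-1}-1$ even-weight words other than $(0,\dots,0)$ survive (each doubled), followed by conjugation with the involution $K=J$ via $\phi_0=L$, $\phi_1=LK=KR$, $KLK=R$ to rewrite each surviving product as an $\{L,R\}$-word beginning with $L$ and distinct from $L^n$. The paper merely sketches this second step with the single example $\phi_1\phi_0^n\phi_1=LR^{n+1}$, whereas you supply the full inductive bookkeeping and the bijection; no gaps.
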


\begin{proof} 
The only products which do not cancel out
are those where $\PP_{1,q}$ appears an even numbers of times
(counted twice). On the other hand we point out that if $K=\left(
  \begin{array}{cc}
  0 & 1\\
  1 & 0
  \end{array}\right)$
then $K^2=Id$, $L=\phi_0$ and $LK=KR=\phi_1$. We thus have
$$
\phi_1\, {\underbrace {\phi_0\dots \phi_0}_{n}}\;\phi_1 = LK
\,{\underbrace {L\dots L}_{n}}\;KR = L \,{\underbrace {R\dots
R}_{n}}\;R
$$
The thesis now easily follows.
\end{proof}

\begin{proposition} \label{traccia-matrici}
Let $A$ be the matrix corresponding to the term
$$(\PP_{0,q})^{n_1} (\PP_{1,q})^{n_2} \cdots
(\PP_{i,q})^{n_l}$$ with $\sum_{j=1}^l n_j =n>1$ and
$i=(1+(-1)^l)/2$. Then, setting $T:=\mbox{\rm trace}(A)$, we have
$T>2$ and
$$\mbox{\rm trace}\left[(\PP_{0,q})^{n_1} (\PP_{1,q})^{n_2} \cdots
(\PP_{i,q})^{n_l} \right] = \frac{1}{\sqrt{T^2-4}} \left(
\frac{2}{T+\sqrt{T^2-4}} \right)^{2q -1}$$
\end{proposition}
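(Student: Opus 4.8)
The plan is to compute the trace of the product matrix $A = (\PP_{0,q})^{n_1}(\PP_{1,q})^{n_2}\cdots(\PP_{i,q})^{n_l}$ directly from the fact that each $\PP_{i,q}$ acts by the weighted composition $(\PP_{i,q}f)(x)=(\phi_i'(x))^q f(\phi_i(x))$, so $A$ acts as $(Af)(x) = (\Psi'(x))^q f(\Psi(x))$ where $\Psi = \phi_{i_1}\circ\cdots\circ\phi_{i_N}$ is the M\"obius transformation attached to the corresponding word in $L$ and $R$ (the composition being the one encoded by the $2\times 2$ matrix product). The trace of such a weighted composition operator, when $\Psi$ is a hyperbolic M\"obius map with an attracting fixed point $x^*$ in the domain, is the standard Mayer/Ruelle formula $\mathrm{trace}(A) = \dfrac{(\Psi'(x^*))^q}{1-\Psi'(x^*)}$; this is exactly the computation invoked for $\QQ_{q,z}$ in Corollary~\ref{q-traceclass} and in \cite{Ma1,Is}. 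So the first step is to record this formula, and the second step is to express $\Psi'(x^*)$ in terms of $T=\mathrm{trace}(A)$.

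For the second step I would use that $A$ is a product of $L$'s and $R$'s, hence lies in the positive monoid of $SL(2,\Z)$; since $n>1$ the word is not a pure power of a single generator, so $A$ is a hyperbolic matrix with $T=\mathrm{trace}(A)>2$. Its two eigenvalues are $\Lambda_\pm = \frac{T\pm\sqrt{T^2-4}}{2}$ with $\Lambda_+\Lambda_-=1$ (determinant one), and the derivative of the associated M\"obius map at its fixed points is $\Lambda_\pm^{-2}$; at the attracting fixed point $x^*\in(0,1)$ one gets $\Psi'(x^*) = \Lambda_+^{-2} = \left(\frac{2}{T+\sqrt{T^2-4}}\right)^2$, a number in $(0,1)$. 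Substituting into the trace formula:
$$
\mathrm{trace}(A) = \frac{\Psi'(x^*)^q}{1-\Psi'(x^*)} = \frac{\Lambda_+^{-2q}}{1-\Lambda_+^{-2}} = \frac{\Lambda_+^{-2q}}{\Lambda_+^{-1}(\Lambda_+-\Lambda_+^{-1})} = \frac{\Lambda_+^{1-2q}}{\sqrt{T^2-4}},
$$
since $\Lambda_+-\Lambda_+^{-1}=\Lambda_+-\Lambda_-=\sqrt{T^2-4}$. Writing $\Lambda_+^{-1}=\frac{2}{T+\sqrt{T^2-4}}$ gives $\Lambda_+^{1-2q} = \left(\frac{2}{T+\sqrt{T^2-4}}\right)^{2q-1}$, which is precisely the claimed identity.

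The main obstacle — and the only genuinely non-formal point — is justifying that the trace of $A$ as an operator on the holomorphic function space actually equals $\frac{\Psi'(x^*)^q}{1-\Psi'(x^*)}$, i.e. that $A$ is trace class and that its attracting fixed point $x^*$ lies inside the disk of holomorphy with $\Psi$ mapping that disk strictly into itself. This follows from the contraction properties of the generators: $L$ and $R$ (equivalently $\phi_0,\phi_1$) map the relevant disk around $(0,1)$ into a proper compact subset, so any nontrivial word $\Psi$ is a strict contraction with a unique attracting fixed point inside, and the weighted composition operator is then nuclear with the stated trace — exactly the setup already used for $\QQ_{q,z}$ via Theorem~\ref{spettro-N} and Corollary~\ref{q-traceclass}. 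One should also check that the normalization $i=(1+(-1)^l)/2$ (so the last factor is $\PP_{1,q}$ when $l$ is even and $\PP_{0,q}$ when $l$ is odd) is just what makes the corresponding matrix word conjugate within the monoid to one with trace $>2$; since the trace is conjugation-invariant and $T>2$ whenever the word is not a pure power, this cyclic-ordering detail does not affect the final formula.
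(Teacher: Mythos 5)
Your proposal is correct and follows essentially the same route as the paper: the paper's proof likewise invokes the nuclear weighted-composition-operator trace formula $\mathrm{trace}(\mathcal V)=\varphi(\bar x)/(1-\psi'(\bar x))$ at the attracting fixed point and then ``applies it to the operators under consideration,'' which is exactly the eigenvalue computation $\Psi'(x^*)=\Lambda_+^{-2}$, $\Lambda_\pm=\tfrac{1}{2}\bigl(T\pm\sqrt{T^2-4}\bigr)$ that you carry out explicitly. You in fact supply more detail than the paper does, including the justification of $T>2$ for words in the positive monoid that are not pure powers of a single generator.
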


\begin{proof} 
The proof of the first assertion amounts to
a straightforward verification.  Let ${\mathcal V}$ be the
composition operator acting as
$({\mathcal V} f)(x) = \varphi(x) f(\psi(x))$
If $\psi(x)$ is holomorphic in a disk and has there a unique fixed
point $\bar x$ with $|\psi'(\bar x)|<1$ then ${\mathcal V}$ is of
the trace-class with $\mbox{trace}({\mathcal V}) = \frac{\varphi(\bar x)}{1-\psi'(\bar
x)}.$
The thesis follows by applying this relation to the operators
under consideration.
\end{proof}

\noindent
We are now going to make use of the correspondence
between products of matrices $L$ and $R$ and the Farey fractions.
Let us consider the Farey tree $\FF$. We recall that every rational number ${a\over b} \in (0,1)$
appears exactly once in $\FF$. One may therefore identify ${a\over b}$
with the path on $\FF$ which reaches it starting from the root
node ${1\over 2}$ (first row), which in turn can be encoded as a matrix
product in the following way: first recall that every rational
number ${a\over b} \in (0,1)$ has a unique finite continued fraction
expansion ${a\over b} =[a_1, \dots, a_k]$ with $a_k>1$, and one may
define the rank of ${a\over b}$ as
$$
\frac a b =[a_1,\dots ,a_k]\; \Longrightarrow \;{\rm
rank}\left(\frac a b \right)=\sum_{i=1}^ka_i -1
$$ 
It turns out
that ${a\over b}$ has rank $n$ if and only if it belongs to $\FF_{n+1}
\setminus \FF_n$. Furthermore, we may uniquely decompose $\frac a
b$ as
$$\frac a b =\frac{a'+a''}{b'+b''} \quad \mbox{with} \quad
 b'a-a'b=ba''-ab''=b'a''-a'b''=1$$
The neighbours $\frac{a'}{b'}$ and $\frac{a''}{b''}$ are the
\emph{parents} of $\frac a b$ in $\FF$ and we may accordingly
identify
$$\frac a b \simeq \left(
\begin{array}{cc}
  a'' & a'\\
  b'' & b'
\end{array}\right) \in \ZZ$$
where
$$\ZZ:= \set{
\left(
\begin{array}{cc}
a & b\\
c & d
\end{array}
\right) \in SL(2,\Z) \ : \  0< a \le c, \, 0\le b< d }$$ Clearly
$\frac 1 2 \simeq L$ and, more generally,
\begin{equation} \label{frama}
\frac a b \simeq L\prod_i M_i
\end{equation}
where the number of terms in the product $L\prod_i M_i$ is equal
to ${\rm rank} ({a\over b})$ and $M_i=L$ or $M_i=R$ according to whether
the $i$-th turn, along the descending path in $\FF$ which starts
from the root node $\frac 1 2$ and reaches ${a\over b}$, goes to the left
or to the right.
Using (\ref{frama}) one may then define a map $T: \FF \to \N$ as
\begin{equation}\label{map-tr}
T\left( \frac a b \right) := \mbox{trace}(L\prod_i M_i)
\end{equation}
Note moreover that the set
$$
\tilde \FF_n:= \FF_{n+1}\setminus \FF_{n}=  \set{ \frac a b \in \FF \ : \ {\rm
rank}\left(\frac a b \right) = n }
$$
has $2^{n-1}$ elements which are in a one-to-one correspondence
with the (equal pairs of) elements in the expansion dealt with in
Lemma \ref{lem-expansion} plus $ \set{ \frac{1}{n+1}}$. We now  obtain an expression for the $Z(q,z)$ and $\zeta(q,z)$ as exponentials of power series whose coefficients are computed along lines of $\FF$.

\begin{theorem} \label{formula-selberg}
For $\Re(q) >1$ and $|z|\le 1$ the two-variable zeta functions $Z(q,z)$ and $\zeta(q,z)$ can be written as
$$
Z(q,z)= \exp \left( - \sum_{n\ge 2} \frac{z^n}{n}\ \Lambda_n(q)\right) \quad , 
\quad \zeta (q,z) = \exp \left( \sum_{n\ge 1} \frac{z^n}{n}\ \Xi_n(q)
\right)
$$ 
with
$$
\Lambda_n(q):=\sum_{\frac a b \in \tilde \FF_n \setminus  \set{ \frac{1}{n+1}}}
\frac{2}{\sqrt{T^2 (\frac a b)-4}} \left( \frac{2}{T(\frac a
b)+\sqrt{T^2(\frac a b)-4}} \right)^{2q -1}
$$
$$
\Xi_n(q)=\sum_{\frac a b \in \tilde \FF_n}\left[ \left( \frac{2}{T(\frac a b)+
\sqrt{T^2(\frac a b)-4}} \right)^{2q}+
 \left( \frac{2}{T(\frac a b)+\sqrt{T^2(\frac a b)+4}} \right)^{2q}\right]
$$ 
where the map $T$ is defined in (\ref{frama})-(\ref{map-tr}).
\end{theorem}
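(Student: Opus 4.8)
The plan is to read off the coefficients of the power series for $-\log Z(q,z)$ and $\log\zeta(q,z)$ from the determinant representations (\ref{tre-zeta})--(\ref{tre-R-zeta}), and then to recognise them as sums over the Farey tree by means of Lemma \ref{lem-expansion}, Proposition \ref{traccia-matrici} and the dictionary (\ref{frama})--(\ref{map-tr}). Starting from $\QQ_{q,z}=z\,\PP_{1,q}(1-z\,\PP_{0,q})^{-1}$, insert the Neumann series $(1-z\,\PP_{0,q})^{-1}=\sum_{m\ge0}z^{m}\PP_{0,q}^{m}$, so that every term of $\mathrm{tr}(\QQ_{q,z}^{k})$ equals $z^{N}$ times the trace of a product of $N$ factors, each $\PP_{0,q}$ or $\PP_{1,q}$, exactly $k$ of them equal to $\PP_{1,q}$; then, using $\log\det(1\mp\QQ_{q,z})=-\sum_{k\ge1}\tfrac{(\pm1)^{k}}{k}\,\mathrm{tr}(\QQ_{q,z}^{k})$, collect the coefficient of $z^{N}$ as a finite combination of traces $\mathrm{tr}(\PP_{w})$ over words $w$ of length $N$. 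These rearrangements are only formal at this stage — $\PP_{0,q}$ is not of trace class, as recalled in the introduction, so $\det(1-z\,\PP_{0,q})$ does not exist — but they are legitimate exactly as in \cite{Rugh}, working on $A_{\infty}(D)$ with the absolutely convergent series expansion of $\QQ_{q,z}$ from Theorem \ref{gauss-map}; this is where the hypotheses $\Re(q)>1$, $|z|\le1$ enter, since by the growth of the Fibonacci numbers every series then converges absolutely.

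\noindent
The second step is to evaluate each $\mathrm{tr}(\PP_{w})$. By Proposition \ref{traccia-matrici} and its proof this trace depends only on $T:=\mathrm{tr}(M_{w})$, where $M_{w}\in GL(2,\Z)$ is the associated product of $\phi_{0},\phi_{1}$: one has $\mathrm{tr}(\PP_{w})=\lambda^{2-2q}\big/(\lambda^{2}-\det M_{w})$ with $\lambda$ the eigenvalue of $M_{w}$ of modulus $>1$, which a short computation rewrites as $\tfrac{1}{\sqrt{T^{2}-4}}\big(\tfrac{2}{T+\sqrt{T^{2}-4}}\big)^{2q-1}$ when $\det M_{w}=1$ and as $\tfrac{1}{\sqrt{T^{2}+4}}\big(\tfrac{2}{T+\sqrt{T^{2}+4}}\big)^{2q-1}$ when $\det M_{w}=-1$. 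For $Z(q,z)=\det[(1-\QQ_{q,z})(1+\QQ_{q,z})]$ only the words $w$ with an even number of factors $\PP_{1,q}$ survive (hence $\det M_{w}=+1$), and by the $K$-conjugation identity in the proof of Lemma \ref{lem-expansion} these words are in bijection — with unchanged matrix — with the words in $L,R$ of length $N$ starting with $L$, i.e., by (\ref{frama})--(\ref{map-tr}), with the elements of $\tilde\FF_{N}$, the excluded word $L^{N}$ matching $\tfrac1{N+1}$. Collapsing the cyclic $1/k$ factors against the number of cyclic representatives of each word leaves the single prefactor $1/N$, so the coefficient of $z^{N}$ in $-\log Z(q,z)$ is $\tfrac1N\Lambda_{N}(q)$; since a surviving word contains at least two factors $\PP_{1,q}$, the sum begins at $N=2$.

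\noindent
For $\zeta(q,z)=(1-z)^{-1}\det(1+\QQ_{q+1,z})\big/\det(1-\QQ_{q,z})$ the new feature is the shift $q\mapsto q+1$. Expanding both determinants and noting that in $\mathrm{tr}(\QQ_{q,z}^{k})$ the matrices carry $\det=(-1)^{k}$, the telescoping identity
$$
\mathrm{tr}(\PP_{w,q})-(\det M_{w})\,\mathrm{tr}(\PP_{w,q+1})=\lambda_{w}^{-2q}
$$
removes both the $1/\sqrt{T^{2}\mp4}$ prefactor and the shift in the exponent, leaving the clean term $\big(\tfrac{2}{T+\sqrt{T^{2}\mp4}}\big)^{2q}$; the factor $(1-z)^{-1}$ supplies the constant $1$ (the neutral fixed point $x=0$ of $F$). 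Here both parities of $k$ occur, giving two families: the cycles of even continued-fraction length ($\det M_{w}=+1$) contribute the terms with $\sqrt{T^{2}-4}$, the cycles of odd length ($\det M_{w}=-1$) the terms with $\sqrt{T^{2}+4}$, and together they account for all periodic points of $F$ — this is the content of the relations $Z_{n}(q,F)=1+\sum_{m}\tfrac nm Z_{m}(q,G)=\alpha^{2qn}+\sum_{m}\tfrac nm Z_{m}(q,H)$ recorded before (\ref{pressure}). Re-indexing both families by $\tilde\FF_{N}$ and once more collapsing the cyclic factors produces the coefficient $\tfrac1N\Xi_{N}(q)$.

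\noindent
The step I expect to be the real obstacle is the last, purely combinatorial one: matching the coefficient of $z^{N}/N$ coming out of the operator expansion with the stated sum over $\tilde\FF_{N}$. It rests on (i) the $K$-conjugation dictionary between words in $\{\phi_{0},\phi_{1}\}$ and words in $\{L,R\}$, tracking the length, the initial letter, and — in the odd case — the single leftover factor $K$; (ii) the correspondence (\ref{frama})--(\ref{map-tr}) between words in $L,R$ and vertices of the Farey tree, together with the identification $L^{N}\leftrightarrow\tfrac1{N+1}$; and (iii) the reorganisation of the multiple sums that cancels the $\sum_{k}\tfrac1k$ bookkeeping against the multiplicity with which each cyclic word is realised — the combinatorial core of Mayer's trace computation. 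Everything else (analyticity and convergence in the stated range, and the elementary algebra turning $\lambda^{2-2q}/(\lambda^{2}\mp1)$ into the displayed radical expressions) is routine.
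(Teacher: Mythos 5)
Your overall architecture matches the paper's: formal manipulation of (\ref{tre-zeta})--(\ref{tre-R-zeta}) into determinants involving $(1-z\,\PP_q^\pm)(1-z\,\PP_{0,q})^{-1}$, identification of the surviving words via Lemma \ref{lem-expansion}, evaluation of each trace by Proposition \ref{traccia-matrici}, and the dictionary (\ref{frama})--(\ref{map-tr}) with $L^n\leftrightarrow\frac{1}{n+1}$. (For $\zeta(q,z)$ the paper makes the $(1-z)^{-1}$ factor rigorous by regularizing $\PP_{0,q}$ with a trace-class perturbation $M_{q,\epsilon}$ whose determinant ratio tends to $1-z$ as $\epsilon\to 0^+$, rather than by your termwise telescoping identity; that is a difference of packaging more than of substance, though your version leaves the ill-defined $\det(1-z\,\PP_{0,q})$ factors uncancelled at the level of rigour.)

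The genuine gap is the convergence of $\sum_{n\ge 2}\frac{z^n}{n}\,\Lambda_n(q)$, which is where the hypothesis $\Re(q)>1$ actually enters and which you dispose of with the incorrect remark that ``by the growth of the Fibonacci numbers every series then converges absolutely.'' The traces $T(\frac ab)$ for $\frac ab\in\tilde\FF_n$ do \emph{not} grow exponentially in $n$: the word $L^{n-1}R$ has trace $n+1$, so the smallest traces occurring in $\tilde\FF_n$ grow only linearly, while $\#\tilde\FF_n=2^{n-1}$ grows exponentially, so termwise estimates prove nothing. One must count how many words have trace at most $k$. This is exactly what the paper does: it invokes the asymptotic $\Psi(k)=k^2\log k/\zeta_{R}(2)+O(k^2)$ for the trace-counting function of the free monoid generated by $L$ and $R$ (\cite{Bo,KOPS}), uses the symmetry $T(\frac ab)=T(\frac ba)$ coming from $R^kK=KL^k$ to restrict to words starting with $L$, and concludes by partial summation that $\sum_k\gamma(k)\,k^{-2\Re(q)}$ converges precisely when $\Re(q)>1$. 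Your stated justification, if it worked, would yield convergence for every $\Re(q)>0$, which already signals that something is wrong; without this number-theoretic input the theorem's range of validity is not established.
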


\begin{proof} We first obtain the expansion for $Z(q,z)$.
A formal manipulation of (\ref{tre-zeta}) gives 
\begin{equation} \label{quattro-zeta}
Z(q,z) =\det [(1-z\PP_q^+)(1-z\PP_q^-)(1-z\PP_{0,q})^{-2}]
\end{equation}
which is well defined by Lemma \ref{lem-expansion}. 

\noindent We now only have to prove that $\sum_{n\ge 2} \frac{z^n}{n}\ \Lambda_n(q)$ converge for $\Re(q) >1$ and $|z|\le 1$. Since then the assertion follows from
(\ref{quattro-zeta}), Proposition \ref{traccia-matrici} and the definition of the map $T$ in (\ref{frama})-(\ref{map-tr}). Set
$$
\gamma(k,n):=\# \set{ \frac a b \in \tilde \FF_n \setminus  \set{ \frac{1}{n+1}}\ :\
T\left( \frac a b \right)=k}
$$
and let ${\mathcal M}$ be the free multiplicative monoid generated by
the matrices $L$ and $R$. The function
$$
\Psi (k)= \# \{ X\in {\mathcal M} \ :\ {\rm trace} (X)\le  k\}
$$
has been recently studied in the literature and the asymptotic behaviour
$$
\Psi (k)= {k^2 \log k \over \zeta_{R}(2)} + O(k^2)
$$
has been found (see \cite{Bo,KOPS}). Let us decompose $\Psi(k)$ as
$\Psi(k)=\Psi_L (k)+\Psi_R (k)$ where $\Psi_L (k)$ (resp. $\Psi_R
(k)$) is obtained restricting to the elements of ${\mathcal M}$
which start with $L$ (respectively $R$). Note that
$$\Psi_L (k)=\sum_{j\le k} \sum_{n=2}^{j-1} \gamma(j,n)$$
Moreover, using $R^kK = K L^k$ one easily realises that if
$$  {a\over b}\simeq \textstyle L\prod_i M_i= \left(
\begin{array}{cc}
a'' & a'\\
b'' & b'
\end{array}\right)
$$
then, setting ${\overline M}_i=L$ if $M_i=R$ and viceversa, we
have
$$ {b\over a}\simeq \textstyle R\prod_i {\overline M}_i=\left(
\begin{array}{cc}
b' & b''\\
a' & a''
\end{array}\right) 
$$
Therefore $T\left( \frac a b \right)=T\left( \frac b a \right)$ and
$$\sum_{j\le k} \sum_{n=2}^{j-1} \gamma(j,n) =
\frac{k^2 \log k}{2 \zeta_{R}(2)} + O(k^2)$$ This implies that if
$$\alpha(k) := \sum_{n=2}^{k-1} \frac{\gamma(k,n)}{n}$$
then
$$\sum_{j\leq k}\alpha (j) =  O(k^2\log k)$$ hence
$$\sum_{n\ge 2} \frac{1}{n}\ \Lambda_n(q) =
\sum_{k=3}^\infty \ \frac{2\, \alpha(k)} {\sqrt{k^2-4}}
\left(\frac{2}{k+\sqrt{k^2-4}} \right)^{2q -1}$$ converges
absolutely for $\Re(q) >1$. The case $|z|<1$ easily follows.

\noindent
To obtain the expansion for $\zeta(q,z)$, we again start
from the following formal manipulation of (\ref{tre-R-zeta})
\begin{equation} \label{quattro-R-zeta}
\zeta (q,z)= \frac{\det [ (1-z\, \PP_{q+1}^{-}) (1-z\PP_{q}^{+})^{-1} (1-z\,
\PP_{0,q+1})^{-1} (1-z\, \PP_{0,q}) ] }{(1-z)}
\end{equation}
Now, given $\epsilon >
0$ let us consider the perturbation $M_{q,\epsilon}$ of the
operator $M$ in (\ref{emme}) acting as
$$(M_{q,\epsilon} \,\varphi)(t) = \frac{e^{-(\frac{1-\epsilon}{1+\epsilon})
t}}{(1+\eps)^q}\ \varphi \left(\frac{t}{1+\eps} \right)$$ 
Reasoning as in the proof of \cite[Proposition 4.5]{GI} one easily sees that
for all $\epsilon >0$ the operator $M_{q,\epsilon}$ is of trace class
on $L^2(m_q)$ and its spectrum is given by $\sigma
(M_{q,\epsilon})= \set{ (1+\epsilon)^{-q-k} }_{k\ge 0}$, each eigenvalue
being simple. Therefore
$$ {\rm trace} \, (M_{q,\epsilon}^n - M_{q+1,\epsilon}^n) = (1+\epsilon)^{-q-n}$$
Let moreover $\PP_{0,q}^{\eps}$ be the corresponding operator on $\HH^2_q$. A short calculation then gives
$$\lim_{\eps \to 0^+} \det [ (1-z\,\PP_{0,q+1}^{\eps})^{-1}
(1-z\, \PP_{0,q}^{\eps})] = \lim_{\epsilon \to 0^+} \left(
1-\frac{z}{1+\epsilon} \right)^{(1+\epsilon)^{-q}} = 1-z$$
and therefore, letting $\PP_{q,\epsilon}^{\pm}:=\PP_{0,q}^{\eps} \pm \PP_{1,q}$,
\begin{equation} \label{ultima}
\zeta (q,z)= \lim_{\eps \to 0^+} {\det [ (1-z\, \PP_{q+1, \epsilon}^{-})  (1-z\,
\PP_{q,\epsilon }^{+})^{-1} ] }
\end{equation}
Starting from this expression along with \cite[Corollary 3.11]{DEIK} and proceeding as above one readily obtains the expansion for $\zeta(q,z)$. 
\end{proof}

\noindent
We remark that from Theorem \ref{formula-selberg}, in particular 
from (\ref{ultima}), we obtain a definition for $\zeta(q,1)$ for $\Re(q)>1$, which is not immediate from (\ref{R-zeta}). Hence we can extend Theorem 
\ref{main-zeta} to the case $z=1$, in particular the function $q 
\mapsto \zeta(q,1) Z(q,1)$ turns out to be analytic for $\Re(q)>1$.

\section*{Acknowledgements.}
C.B. is partially supported by King Saud University, Riyadh, Saudi Arabia.

\end{document}